\patchcmd{\@addmarginpar}{\ifodd\c@page}{\ifodd\c@page\@tempcnta\m@ne}{}{}
\newcommand{\xmark}{\ding{55}}  
\newcommand{\cmark}{\ding{51}} 
\newtheorem{thm}{Theorem}
\newtheorem{defi}{Definition}
\newtheorem{assum}{Assumption}
\newtheorem{lemma}{Lemma} 
\newtheorem{remark}{Remark}
\newcommand{\R}{\mathbb{R}}
\def \reals   { \mathbb{R} }
\newcommand{\be}{\begin{equation}}
\newcommand{\ee}{\end{equation}}
\newcommand{\en}{\begin{equation*}}
\newcommand{\een}{\end{equation*}}
\newcommand{\eqn}{\begin{eqnarray}}
\newcommand{\eeqn}{\end{eqnarray}}
\newcommand{\bmat}{\begin{bmatrix}}
\newcommand{\emat}{\end{bmatrix}}
\newcommand{\btab}{\begin{tabular}}
\newcommand{\etab}{\end{tabular}}
\newcommand{\Mcal}{\mathcal{M}}
\newcommand{\Pcal}{\mathcal{P}}
\DeclareMathOperator*{\argmin}{\rm arg\,min}
\DeclareMathOperator*{\argmax}{\rm arg\,max}
\newcommand{\prox}{\operatorname{prox}}
\newcommand{\dom}{\operatorname{dom}}
\newcommand{\grad}{\operatorname{grad}}
\newlength{\imgwidth}
\newcommand{\revise}[1]{\textcolor{black}{#1}}
\newcommand{\hjn}[1]{\textcolor{black}{#1}}
\newcommand{\twoCol}[2]{\ifthenelse{\boolean{twoColVersion}} {#1} {#2} }
\def\norm#1{\|#1\|}
\def\fprod#1{\langle #1\rangle}
\def\reals{\mathbb{R}}
\def\cM{\mathcal{M}}
\def\cO{\mathcal{O}}
\def\cP{\mathcal{P}}
\def\cX{\mathcal{X}}
\def\cY{\mathcal{Y}}
\def\sm{\texttt{sm-MGDA}}
\newcommand{\iprod}[2]{\left \langle #1, #2 \right \rangle }
\def\xe{x_\epsilon}
\def\ye{y_\epsilon}
\def\cG{\mathcal{G}}
\crefname{hypothesis}{Hypothesis}{Hypotheses}
\crefname{lemma}{Lemma}{Lemmas}
\crefname{thm}{Theorem}{Theorems}
\crefname{assumption}{Assumption}{Assumptions}
\crefname{corollary}{Corollary}{Corollaries}
\crefname{remark}{Remark}{Remarks}
\newcommand{\pcal}{\mathcal{P}}
\newcommand\redsout{\bgroup\markoverwith{\textcolor{red}{\rule[0.5ex]{2pt}{1pt}}}\ULon}
\DeclareMathOperator{\diag}{diag}
\newcommand{\rmnum}[1]{\uppercase\expandafter{\romannumeral #1}}
\def\sa#1{\textcolor{black}{#1}}
\def\hj#1{\textcolor{black}{#1}}
\def\rv#1{\textcolor{black}{#1}}
\def\mod#1{\textcolor{black}{#1}}
\newcommand{\thickhline}{%
    \noalign {\ifnum 0=`}\fi \hrule height 1pt
    \futurelet \reserved@a \@xhline
}
\newcolumntype{"}{@{\hskip\tabcolsep\vrule width 1pt\hskip\tabcolsep}}
\title{A Retraction-Free Method for Nonsmooth Minimax Optimization over a Compact Manifold}
\shorttitle{A Retraction-Free Nonsmooth Minimax Optimization}
\author{%
  Necdet Serhat Aybat\thanks{These authors contributed equally to this work.} \\
  Department of Industrial and Manufacturing Engineering\\
  Penn State University\\
  University Park, PA, USA\\
  \texttt{nsa10@psu.edu} \\
  \And
  Jiang Hu\footnotemark[1] \\
  Yau Mathematical Sciences Center\\
  Tsinghua University \\
  Beijing 100084, China \\
  \texttt{hujiangopt@mail.com} \\
  \And
  Zhanwang Deng\thanks{The student coauthor contributed to the numerical experiments.}  \\
  Academy for Advanced Interdisciplinary Studies\\
  Peking University\\
  Beijing 100871, China \\
  \texttt{dzw\_opt2022@stu.pku.edu.cn}
  }
\begin{document}
\maketitle

\begin{abstract}
  We study the minimax optimization problem over \sa{a compact submanifold $\cM$, i.e.,
  $\min_{x\in\cM}\max_y \rv{f_r(x,y):=}f(x,y)\rv{-h(y)}$,} where 
  $f$ is continuously differentiable \sa{in $(x,y)$}, \rv{$h$ is a closed, weakly-convex (possibly non-smooth) function} and \rv{we assume that the regularized coupling function $-f_r(x,\cdot)$ is either $\mu$-PL for some $\mu>0$ or concave $(\mu=0)$ for any fixed $x$ in the vicinity of $\cM$}. To address the nonconvexity 
  \sa{due to} the manifold constraint, we reformulate the problem 
  \sa{using} an \textit{exact penalty} 
  \rv{for the constraint $x\in\cM$ and enforcing a convex constraint $x\in X$ for some $X\supset \cM$ onto which projections can be computed efficiently.} 
  Building upon this new formulation \sa{for the manifold minimax problem in question}, a single-loop smoothed \hj{manifold} gradient descent-ascent 
  \sa{(\sm)} algorithm is proposed. Theoretically, 
  \rv{any limit point of \sm{} sequence is a stationary point of the manifold minimax problem and} \sm{} \sa{can generate an \rv{$\cO(\epsilon)$-stationary point} of the 
  \hj{original}
  problem with $\mathcal{O}(\kappa / \epsilon^2)$ and 
  $\rv{\tilde\cO}(l^4/\epsilon^4)$
   complexity for $\mu>0$ and $\mu=0$ scenarios, respectively, where $\kappa=l/\mu$ is the condition number and $l$ denotes the Lipschitz constant of the gradient corresponding to the penalized problem \rv{over $X\times \dom h$}.} \rv{Moreover, for the $\mu=0$ setting, through adopting Tikhonov regularization of the dual, one can improve the complexity to $\cO(l^2/\epsilon^3)$ at the expense of asymptotic stationarity.}
  \hj{The key component, \rv{common in the analysis of all cases,} is to connect $\epsilon$-stationary points between the penalized problem and the original problem}
  by showing that \rv{the constraint $x\in X$} \sa{becomes inactive and the penalty term tends to $0$ along any convergent subsequence}. To our knowledge, 
  \sa{\sm} is the first retraction-free algorithm for minimax problems over compact submanifolds, \sa{and this is a very desirable algorithmic property since through avoiding retractions, one can get away with 
  {matrix orthogonalization subroutines} required for computing retractions to manifolds arising in practice, which are not GPU friendly.} 
  Experiments on quadratic minimax problems, robust deep neural network training, and superquantile-based learning demonstrate clear advantages over state-of-the-art algorithms \rv{that rely on retraction operation in each iteration.}
\end{abstract}

\section{Introduction}
Due to the broad applications in machine learning, minimax optimization has attracted 
\rv{significant} attention 
\rv{arising in the context of} generative adversarial networks (GANs) \citep{goodfellow2014generative}, robust deep neural network training \citep{madry2017towards,kurakin2016adversarial,goodfellow2014explaining}, superquantile-based learning \citep{chen2018robust,curi2020adaptive}, and reinforcement learning \citep{du2017stochastic,dai2018sbeed}. 
\rv{Minimax problems involving manifold constraints naturally arise in applications} such as robust geometry-aware PCA \citep{horev2016geometry} and \rv{those involving subspace robust Wasserstein distance optimization}~\citep{paty2019subspace,lin2020projection}. Furthermore, adding manifold constraints, such as orthogonality, on the \rv{model} parameters has been found effective in accelerating the training \citep{arjovsky2016unitary,cho2017riemannian,kongmomentum}, 
\rv{preventing the gradient sequence from} diminishing or exploding, and improving generalization \citep{cogswell2015reducing}. 

In this paper, we consider the following 
minimax optimization problem over \sa{a \textit{compact submanifold} of a normed vector space $(\cX,\norm{\cdot})$ with  $\cX:=\reals^{d_1\times r}$ and $\cY:=\R^{d_2}$}:
\be \label{prob} \min_{x \in \Mcal} \max_{\hjn{y \in \mathcal{Y}}} \rv{f_r(x,y):=f(x,y) \hjn{- h(y)}}, \ee
where $\Mcal:=\{x\in \R^{d_1\times r}: c(x) = 0\}$ for some smooth \rv{function} $c:\reals^{d_1\times r}\to\reals^p$, \hjn{$h:\mathcal{Y} \rightarrow \rv{\R\cup\{+\infty\}}$ is a \rv{proper, closed,} (\rv{possibly nonsmooth}) $\zeta$-weakly convex function\footnote{We say a function $h$ is $\zeta$-weakly convex if $h(\cdot) + \frac{\zeta}{2}\|\cdot\|$ is convex for some $\zeta\geq 0$.} \rv{for some $\zeta\geq 0$} with a \rv{compact, convex} domain $Y:= \{y \in \mathcal{Y}: h(y) < \infty\}$}, and
$f$ is continuously differentiable on some open set containing \rv{$\bar X\times Y$ where $\bar X\supset\cM$ is some compact set that will be defined later.} \rv{In the rest, we used $f_r(\cdot,\cdot)$ to denote $f(\cdot,\cdot)$ regularized with $h(\cdot)$.} 

\rv{We study \eqref{prob} assuming that one of the following conditions holds: \textbf{(i)} $-f_r(x,\cdot)$ is $\mu$-PL, i.e., it satisfies Polyak-Lojasiewicz condition 
for some $\mu>0$, \rv{uniformly for all $x\in \bar X$} 
\rv{(see 
Definition~\ref{def:PL})}, 
or \textbf{(ii)} $f_r(x,\cdot)$ is concave for all $x\in \bar X$
(abusing the notation, we abbreviate this scenario by $\mu=0$).} 
\sa{Throughout the paper 
we mainly focus on the setting where $\Mcal:=\{x \in \R^{d_1 \times r}: x^\top x = I_r \}$ is the Stiefel manifold, i.e., $c(x)=x^\top x-I_r$, and 
\rv{later in \cref{sec:extensions} we 
discuss how our results can be extended to 
a more general class of smooth defining functions $c(\cdot)$.}} 

Algorithms for solving the problem \eqref{prob} have been extensively studied
in the literature \citep{huang2023gradient,xu2024riemannian,xu2024efficient,jordan2022first,cai2023curvature,hu2024extragradient,han2023riemannian}. Among these works, \citep{huang2023gradient,xu2024riemannian,xu2024efficient} are the only works we are aware of studying a manifold minimax problem with a smooth $f$ \textit{without} assuming geodesic convexity, i.e., when $f$ is not geodesically convex in $x$ for \sa{some} fixed \sa{$y\in Y$}. This distinction is important, as \sa{by the Hopf-Rinow theorem,} any geodesically convex function defined on a \textit{compact} \sa{Riemannian} manifold must be constant~\citep[Theorem 6.13]{lee2006riemannian}. Consequently, the results in \citep{jordan2022first,cai2023curvature,hu2024extragradient,han2023riemannian} are not directly applicable to problem \eqref{prob} when $f(\cdot,y)$ is \sa{not constant for fixed $y\in Y$}. 

\sa{In~\citep{xu2024riemannian,xu2024efficient}, the authors consider nonconvex-linear minimax problems on Riemannian manifolds, and under the linearity assumption in the dual, they establish $\cO(1/\epsilon^3)$ iteration complexity. Neither RADA method in~\citep{xu2024riemannian} nor ARPGDA in~\citep{xu2024efficient} is retraction-free, they require computing a retraction onto the manifold at each iteration -- ARPGDA is a single-loop method and RADA requires inexactly solving smooth-strongly concave minimax subproblems over the manifold at each iteration. \rv{On the other hand, \citep{huang2023gradient} considers nonconvex-strongly concave minimax problems, and it establishes $\cO(1/\epsilon^2)$ complexity for the proposed RGDA method; that said, RGDA is also not retraction-free and it requires computing a retraction onto $\cM$ at each iteration.} {For the main use case considered in this paper, the Stiefel manifold ~\citep{ablin2022fast}, the retraction always involves some
expensive linear algebra operation, such as
matrix inversion, exponential or square-root, which quickly become expensive as the dimension of the matrices grows (especially for the square Stiefel manifold where $d_1 = r$).} Therefore, in this paper we will investigate the following natural question:}

\sa{\textit{Can one design a retraction-free single-loop first-order method \rv{with cheap per-iteration complexity} to efficiently compute stationary points of \rv{nonconvex-PL and} nonconvex-concave minimax problems on compact submanifolds?}} 


\hj{Our main contributions are listed below.}\vspace*{-2mm}
\paragraph{Retraction-free smoothed manifold GDA algorithm.} We reformulate the manifold-constrained minimax problem using an \textit{exact penalty} for the manifold constraint; \rv{moreover, we also introduce a norm-ball constraint \rv{$x\in X$} to 
ensure that $f$ is smooth over the compact set $X\times Y$ with a global Lipschitz constant  for $\nabla f$ \rv{--let $l>0$ denote this constant}.} 
We propose a smoothed manifold gradient descent-ascent (\sm{}) method that operates entirely in Euclidean space, without requiring retractions or projections onto the manifold. This eliminates  matrix orthogonalization and leads to a GPU-friendly implementation. Numerical experiments on quadratic minimax examples, robust deep neural network training, and superquantile-based learning demonstrate clear advantages over state-of-the-art methods. \rv{To our knowledge, 
  \sa{\sm} is the first retraction-free algorithm for minimax problems over compact submanifolds.}\vspace*{-2mm}
\paragraph{Convergence guarantee for both merely concave \rv{and PL} settings.} We establish that \sm{} finds an \rv{$\cO(\epsilon)$-stationary point} with $\mod{\tilde{\mathcal{O}}}(l^4/\epsilon^{4})$ complexity in the merely concave case and $\mathcal{O}(\kappa/\epsilon^2)$ in the 
\rv{PL} case, where $\kappa = l/\mu$. The key technical ingredients in our analysis are to relate stationarity in the penalized problem to that of the original problem by 
\rv{exploiting} the exact penalty property \rv{of the penalty function adopted in our reformulation}, and to show that the 
bound constraint $x\in X$ \sa{eventually becomes inactive under the assumption that the primal function is lower bounded.} Compared to \citep{huang2023gradient}, our method removes the requirement for strong concavity and 
\rv{can handle the settings where $f_r(x,\cdot)$ is merely concave, or $-f_r(x,\cdot)=h(\cdot)-f(x,\cdot)$ is $\mu$-PL with a possibly non-smooth weakly convex regularizer $h(\cdot)$}. To our knowledge, \sm{} is the first algorithm with provable convergence guarantees for solving~\eqref{prob} in the merely concave and \rv{non-smooth $\mu$-PL settings} \sa{without resorting to retraction operations}. \rv{Moreover, we also discuss how to relax the compactness assumption on the dual domain $Y$ if we strengthen the $\mu$-PL assumption to strong concavity with modulus $\mu>0$ --in short, we call it $\mu$-concave. We establish that in all of these setting, any limit point of \sm{} sequence is a stationary point of the manifold minimax problem. Finally, for the $\mu=0$ setting, we also show that through adopting Tikhonov regularization of the dual, one can improve the complexity to $\cO(l^2/\epsilon^3)$ for computing an $\cO(\epsilon)$-stationary point at the expense of losing the asymptotic stationarity of the iterate sequence.}
\vspace*{-2mm}
{\paragraph{Notation.} For a vector $x\in\R^d$, $\|x\|$ denotes the Euclidean norm; for a matrix $x\in\R^{d\times r}$, $\|x\|$ and $\|x\|_2$ denote the Frobenius and spectral norms, respectively. For a closed convex set $X$, let $\delta_X(\cdot)$ denote its indicator function, defined as $\delta_X(x) = 0$ if $x \in X$ and $+\infty$ otherwise. The projection of a point $x$ onto $X$ is denoted by $\mathcal{P}_X(x) := \arg\min_{y \in X} \|x - y\|^2$, and the distance from $x$ to $X$ is defined as $\operatorname{dist}(x, X) := \|x - \mathcal{P}_X(x)\|$. For any square matrix $x\in\reals^{n\times n}$, we define the symmetrization operator as $\operatorname{sym}(x) := \frac{1}{2}(x + x^\top)$; \rv{moreover, ${\rm diag}(x)\in\reals^n$ denotes the vector of diagonal elements of $x$.} Given a 
\rv{proper, closed} function $h$, \rv{we use $\partial h(x)$ to denote the Fr\'echet subdifferential\footnote{\rv{When $h$ is proper closed convex, $\partial h$ concides with the convex subdifferential, and if $h$ is differentiable at $x$, $\partial h(x)=\{\nabla h(x)\}$.}} at $x$, i.e., $\partial h(x):=\{s:\ \liminf_{y\to x}\frac{h(y)-h(x)-\fprod{s,y-x}}{\norm{y-x}}\geq 0\}$.}} \rv{When $-h(\cdot)$ is weakly convex, we use $-\partial h(x)$ to denote the Fr\'echet subdifferential of $-h$ at $x$, i.e., $-\partial h(x):=\partial (-h(x))$.}  {Finally, for a point $x \in \mathcal{M}$, $T_x \mathcal{M}$ denotes the tangent space of the manifold $\mathcal{M}$ at $x$. For a differentiable function $h$, we write $\nabla h(x)$ and $\operatorname{grad} h(x)$ for its Euclidean and Riemannian gradients at $x$, respectively. Let $g:\reals^n\to\reals^p$ be a differentiable map at $x$, then we use $\mathbf{J}_g(x)\in \reals^{p\times n}$ to denote the Jacobian matrix at $x$, and for any given $u\in\reals^p$ we use $\nabla g(x)[u]:=\mathbf{J}_g(x)^\top u$.}

\begin{table}[h]
\centering
\setlength{\tabcolsep}{1.2pt}
\caption{\small \textbf{Comparison of convergence guarantees and assumptions.} In the column \textbf{``Single-loop''}, we indicate whether the method consists of a single-loop iteration or not. In \textbf{``GNC $f(\cdot,y)$''} 
column, we indicate whether \textit{geodesic nonconvexity} \rv{of $f(\cdot,y)$ over $\cM$ is 
allowed when $y$ is fixed; in \textbf{``Nonlinear $f(x,\cdot)$''} column we indicate whether nonlinearity of $f(x,\cot)$ is allowed when $x\in\cM$ is fixed. In the two columns about complexity, $\mu=0$ and $\mu>0$ correspond to the cases where $-f_r(x,\cdot)$ are convex and $\mu$-PL, respectively.} Finally, in the column \textbf{``RF''} we state whether the method is \textit{retraction free} or not. The stationarity metrics used across different works are as follows: \citep{huang2023gradient} considers $\min_{x\in\cM}\max_{y\in Y}f(x,y)$ and adopts the criterion $\|\operatorname{grad} \rv{F}(x_\epsilon)\| \leq \epsilon$, where $\rv{F}(x) := \max_{y \in Y} f(x, y)$ for $x\in\cX$; \citep{jordan2022first} considers $\min_{x\in\cM}\max_{y\in \mathcal{N}}f(x,y)$ where $\cM$ and $\mathcal{N}$ are Riemannian manifolds, and the metric adopted for $\epsilon$-stationarity is $\sqrt{\|x_\epsilon - x^*\|^2 + \|y_\epsilon - y^*\|^2} \leq \epsilon$, where $(x^*, y^*)$ denotes the unique saddle point under the assumption on $f$ satisfying geodesic strong convexity–geodesic strong concavity; \citep{xu2024riemannian} considers \eqref{prob} such that $f(x, y) = g(x) + \langle \mathcal{A}(x), y \rangle$ and $h(\cdot)$ is a closed convex function, and adopts two different criteria for measuring $\epsilon$-stationarity, i.e., $\epsilon$-RGS: $\max \left\{ \|\operatorname{grad}_x f(x_\epsilon, y_\epsilon)\|, \frac{1}{\gamma} \| y_\epsilon - \operatorname{prox}_{\gamma h}(y_\epsilon + \gamma \mathcal{A}(x_\epsilon)) \| \right\} \leq \epsilon$, and $\epsilon$-ROS: $\max \left\{ \operatorname{dist}\left(0, \operatorname{grad} g(x_\epsilon) + \mathcal{P}_{T_{x_\epsilon} \mathcal{M}}\Big( \nabla \mathcal{A}(x_\epsilon)^\top \partial h^*(p_\epsilon)\Big)\right), \|p_\epsilon - \mathcal{A}(x_\epsilon)\| \right\} \leq \epsilon$, where $h^*$ is the Fenchel conjugate of $h$; and \citep{xu2024efficient} considers $\min_{x\in\cM}\max_{y\in Y}f(x,y)$ such that $f(x,\cdot)$ is linear for every $x\in\cM$ fixed. For our \sm{} algorithm we measure stationarity 
using the criterion $\max \left\{  \| \grad_x f(\Pcal_{\Mcal}(x_\epsilon), y_\epsilon)\|, 
\rv{{\rm dist}\Big(0, -\nabla_y {f}(\Pcal_{\Mcal}(x_\epsilon),y_\epsilon) +\partial{h}(y_\epsilon) \Big)}\right\} \leq \epsilon$.}
\vspace*{2mm}
\label{tab:comparison}
{\small
\begin{tabular}{l|c|c|c|c|c|c}
\toprule
\textbf{Algorithm} & \textbf{Single-loop} & \textbf{GNC} $f(\cdot,y)$ & \textbf{Nonlinear $f(x,\cdot)$} &\textbf{Complexity} ($\mu = 0$) & \textbf{Complexity} ($\mu > 0$) & \textbf{RF} \\
\midrule
\rv{RGDA}~\citep{huang2023gradient} & \cmark & \cmark & \cmark & N/A & $\cO(\epsilon^{-2})$ & \xmark \\
RCEG~\citep{jordan2022first} & \cmark & \xmark & \cmark & $\cO(\epsilon^{-1})$ & $\cO(\log(\epsilon^{-1}))$ & \xmark \\
RADA~\citep{xu2024riemannian} & \xmark & \cmark & \xmark & $\cO(\epsilon^{-3})$ & N/A & \xmark \\
ARPGDA~\citep{xu2024efficient} & \cmark & \cmark & \xmark & $\cO(\epsilon^{-3})$ & N/A & \xmark \\
\sm{} & \cmark & \cmark & \cmark & \mod{$\cO(\epsilon^{-3})$ or $\cO(\epsilon^{-4})$$^\diamond$}  & $\cO(\epsilon^{-2})$ & \cmark \\
\bottomrule
\end{tabular}}%
\begin{flushleft}
\footnotesize
$^\diamond$ These two complexity results arise under different algorithmic settings and analysis frameworks. 
\rv{The $\mathcal{O}(\epsilon^{-3})$ 
bound for computing an $\epsilon$-stationary point is achieved by introducing a Tikhonov regularization in the $y$-variable, i.e., given an arbitrary $\bar y\in Y$, solving the nonconvex--strongly concave approximate problem $\min_{x\in \cM}\max_{y\in\cY} f_r(x,y)-\mu\norm{y-\bar y}^2$ for $\mu=\cO(\epsilon)$.} 
In contrast, the $\mathcal{O}(\epsilon^{-4})$ 
bound corresponds to directly solving the original problem, with additional algorithmic parameters introduced to guarantee \rv{asymptotic} convergence \rv{in terms of any limit point of the iterate sequence being a stationary point of the manifold minimax problem.} Although the former bound is better \sa{for small $\epsilon>0$}, the generated iterates converge only to a solution of the approximate problem rather than the original one \sa{in~\eqref{prob}}.
\end{flushleft}
\vspace*{2mm}
\end{table}

\section{\sa{Methodology \& Smoothed Manifold GDA (\sm) Algorithm}}
Recently, the constraint dissolving method for Riemannian optimization \rv{has been proposed in \citep{xiao2024dissolving} 
to cast} 
a manifold-constrained optimization \rv{problem into} an \textit{unconstrained} one. Inspired by \sa{this methodology, \rv{which is} designed for manifold optimization (in the primal sense), given \rv{a} compact smooth submanifold $\Mcal:=\{x\in \R^{d_1\times r}: c(x) = 0\}$ for some smooth $c:\reals^{d\times r}\to\reals^p$, we can rewrite the 
minimax problem in}~\eqref{prob} as\vspace*{-2mm}
\be \label{prob:pen1}  \min_{x \in 
\rv{\cX}} \max_{y \in 
\rv{\cY}} \rv{\tilde f_r(x,y):=}\tilde{f}(x,y)\rv{-h(y)},\quad \mbox{where}\quad \rv{\tilde{f}(x,y)}:= f(A(x),y) + \frac{\rho}{4}\|c(x)\|^2, \ee
\rv{where $A:\cX\to \cX$} is the constraint dissolving operator, and $\rho >0$ is \rv{a large enough penalty parameter}--the assumptions on $c(\cdot)$ and $A(\cdot)$ are similar to ~\citep[Assumptions~1.1 and 1.2]{xiao2024dissolving} and will be specified later in \cref{sec:analysis}. \rv{The partial gradient $\nabla_x \tilde f(x,y)$ can be computed using chain rule as follows:
\begin{equation}
    \nabla_x \tilde f(x,y)=\nabla A(x)[\nabla_x f(A(x), y)] + \frac{\rho}{2} \nabla c(x)[c(x)],
\end{equation}
where $\nabla A(x)[u]:=\mathbf{J}_A(x)^\top u$ for $x,u\in\reals^{d_1}$ with $\mathbf{J}_A(x)\in\reals^{d_1\times d_1}$ denoting the Jacobian of $A(\cdot)$ at $x$, and $\nabla c(x)[u]:=\mathbf{J}_c(x)^\top u$ for $x\in\reals^{d_1}$ and $u\in\reals^{p}$ with $\mathbf{J}_c(x)\in\reals^{p\times d_1}$ denoting the Jacobian of $c(\cdot)$ at $x$.}

\rv{For example, when $\Mcal$ is the Stiefel manifold, i.e., $\cM=\{x\in\reals^{d_1\times r}:\ x^\top x=I_r\}$, we can set $A(x) = x(\frac{3}{2}I_r - \frac{1}{2}x^\top x)$ and $c(x)=x^\top x-I_r$, for which we have $\sa{\nabla} A(x) [u] := u (\frac{3}{2}I_r - \frac{1}{2}x^\top x) - x {\rm sym}(x^\top u)$ and $\nabla c(x)[u]=2xu$.} 

\sa{We first state our assumptions on the manifold minimax problem in~\eqref{prob}.}
\begin{assum}
\label{assum:Y}
    \textbf{(i)} Let $h: \cY \rightarrow \rv{\R\cup\{+\infty\}}$ be a \rv{proper, closed,} $\zeta$-weakly convex function with a closed domain $Y:=\dom h=\{y \in \cY: h(y) < \infty\}$, and $h$ is locally Lipschitz on its domain. \textbf{(ii)} Suppose $Y$ is a 
    bounded set. 
\end{assum}
\begin{remark}
    \rv{
    Assumption~\ref{assum:Y} implies that there exists a constant $l_h>0$ such that $h$ is Lipschitz continuous over the compact set $Y$, i.e., $|h(y_1) - h(y_2)| \leq l_h \|y_1 - y_2\|$ for any $y_1, y_2 \in Y$.}
\end{remark}
\begin{defi}
\label{def:bar_X}
    \rv{Suppose $\cM\subset\cX$ is a compact submanifold. Define $\bar X \coloneqq \{A(x): 
    \norm{x}\leq C \}\subset \cX=\reals^{d_1 \times r}$ 
    for some $C>\frac{1}{2}+\sup_{x\in\cM}\norm{x}$.}
\end{defi}
\begin{assum} \label{assum:lip-f}
    Suppose that $f:\cX\times \cY\to\reals$ is differentiable on an open set containing $\rv{\bar X} \times Y$, \rv{where $\bar X\subset \cX$ is given in Definition~\ref{def:bar_X} and $Y=\dom h\subset \cY$,} and that there exist $L_{xx},L_{xy},L_{yx},L_{yy}\geq 0$ such that
$$
\begin{aligned}
& \left\|\nabla_x {f}\left(x_1, y_1\right)-\nabla_x {f}\left(x_2, y_2\right)\right\| \leq L_{xx}\left\|x_1-x_2\right\|+L_{xy}\left\|y_1-y_2\right\|, \\
& \left\|\nabla_y {f}\left(x_1, y_1\right)-\nabla_y {f}\left(x_2, y_2\right)\right\| \leq L_{yx}\left\|x_1-x_2\right\|+L_{yy}\left\|y_1-y_2\right\|,\quad  x_1, x_2 \in \rv{\bar X},\quad y_1, y_2 \in Y.
\end{aligned}
$$
\rv{Let $L:=\max\{L_{xx},L_{xy},L_{yx},L_{yy}\}$.}
\end{assum}

\begin{defi}
\label{def:primal} 
\rv{$F:\cX\to\reals$ denotes the primal function, i.e., \rv{$F(x):=\max_{y\in \cY}f_r(x,y)$}, and let $F^* := \min_{x \in \cM} F(x)$.} 
\end{defi}

\rv{In the rest, we analyze the convergence properties of the proposed \sm{} algorithm applied to \eqref{prob:pen1} under different assumptions on $f_r$ defined in \eqref{prob}. First, we consider the scenario where $-f_r(x,\cdot)$ satisfies the PL property for fixed $x$.}

\rv{The PL property is usually given for smooth functions~\citep[Theorem 2]{karimi2016linear} and its extension for composite functions $p_s+p_c$ where $p_s$ is smooth  and $p_c$ is a proper, closed convex function, is given in \citep[Eq. (12)]{karimi2016linear}. On the other hand, the definition we adopted from \citep{liao2024error} is more general and applies to a more general class of possibly nonsmooth $\zeta$-weakly convex functions.}
\begin{defi}[\citep{liao2024error}]
\label{def:PL}
    \rv{Let $\varphi:\reals^n\to\reals\cup\{+\infty\}$ be a proper, closed, $\zeta$-weakly convex function, and let $S=\argmin_x \varphi(x)$. Suppose $S\neq\emptyset$ and let $\varphi^*=\varphi(x^*)$ for some $x^*\in S$. $\varphi(\cdot)$ satisfies PL inequality with constant $\mu>0$, i.e., we say $\varphi(\cdot)$ is $\mu$-PL, if $2\mu (\varphi(x)-\varphi^*)\leq {\rm dist}^2(0,\partial \varphi(x))$ for all $x\in\dom \varphi$.}
\end{defi}
\rv{For particular examples of nonsmooth $\zeta$-weakly convex functions that satisfy PL condition, see \citep[Sec. 3.2]{liao2024error}.} 

\rv{Consider 
$f_r$ defined in \eqref{prob}. We assume that \rv{$-f_r(x,\cdot)$} satisfies one of the following conditions uniformly for all $x\in \bar X$: \textit{(i)} $\mu$-PL 
(see \cref{assum:sc}), 
\textit{(ii)} $\mu$-strongly convex, and \textit{(iii)} merely convex, i.e., $\mu=0$.}
\begin{assum} 
\label{assum:sc} 
\rv{There exists $\mu>0$ such that $-f_r(x,\cdot)$} is $\mu$-PL for all $x \in \bar X\subset\cX$, i.e.,
\[ \rv{ \max_{w \in \cY} f_r(x,w) - f_r(x,y)  \leq \frac{1}{2\mu}~{\rm dist}^2\Big(0, -\nabla_y f(x,y) +\partial h(y) \Big),\quad \forall x \in \bar X,\ y \in 
\rv{Y}}.\]
\end{assum}
\begin{remark}
\label{rem:PL-relations}
    \rv{The solution set $\argmax_{w \in \cY} f_r(x,w)$} is nonempty from the compactness of $Y$. 
    By \citep[Theorem~3.1]{liao2024error}, the assumption on $-f_r(x,\cdot)$ satisfying $\mu$-PL condition is weaker than assuming $f_r(x,\cdot)$ is strongly concave with modulus $\mu>0$; indeed, it is equivalent to \rv{the} 
    error–bound condition and further implies quadratic growth of $-f_r(x,\cdot)$. Examples of nonconvex–PL problems include \rv{$\min_x\max_y f(x,By)$ where $f(\cdot,\cdot)$ is nonconvex–strongly concave} and $B$ is an arbitrary matrix, as well as generative adversarial imitation learning for the linear–quadratic regulator \citep{cai2019global}. 
\end{remark}
\begin{assum} 
\label{assum:sc2}
\rv{There exists $\mu>0$ such that $f_r(x,\cdot)$ is $\mu$-strongly concave for all $x \in \bar X\subset \cX$, i.e., for any $y\in Y$ and $g\in\partial h(y)$, it holds that
$f_r(x,y) +\fprod{\nabla_y f(x,y)-g,~w-y}-\frac{\mu}{2}\norm{w-y}^2  \geq f_r(x,w)$ for all $x \in \bar X$, and $w \in \rv{Y}$.}
\end{assum}
\begin{assum} 
\label{assum:mc} 
\rv{$h:\cY\to\reals\cup\{+\infty\}$ is merely convex, i.e., 
$\zeta=0$ in \cref{assum:Y}; moreover, $f(x,\cdot)$ is merely concave for all $x\in \bar X\subset \cX$, i.e., for any $y\in Y$ fixed, $f(x,y)+\fprod{\nabla_y f(x,y),~w-y}\geq f(x,w)$ for all $x\in \bar X$ and $w\in Y$.}
\end{assum}
\begin{remark}
    One can replace \cref{assum:lip-f} with the assumption that $f$ is twice continuously differentiable on an open set containing $\bar X\times Y$, where $\bar X$ is given in Definition~\ref{def:bar_X}.
\end{remark}
{Given $(x^*,y^*)\in \cX\times Y$,} the results in \citep{xiao2024dissolving} imply that for sufficiently large but fixed \sa{$\rho>0$}, if ${\rm dist}(x^*, \Mcal) \leq \delta$ for \rv{some} small enough $\delta > 0$, then
\begin{align}
    \label{eq:stationary-equiv}
    \nabla_x \tilde{f}(x^*, y^*) = 0\quad \Leftrightarrow\quad \sa{x^*\in\cM},\quad \grad_x f(x^*,y^*) =0,
\end{align}
\sa{where $\grad_x f(x^*,y^*)$ denotes} the \sa{partial} Riemannian gradient of $f$ \sa{with respect to $x$ over 
$\cM$.} \rv{For any $(\bar x,\bar y)\in\cM\times Y$,  the \sa{partial} Riemannian gradient of $f$ with respect to $x$ can be computed using
\begin{equation}
\label{eq:Riemannian_grad}
    \grad_x f(\bar x,\bar y)=\nabla_x f(\bar x,\bar y)-\bar x \operatorname{sym}(\bar x^\top \nabla_x f(\bar x,\bar y)).
\end{equation}}%
\hj{\rv{For details on the first-order} optimality condition in manifold optimization, we refer to \citep{absil2009optimization,boumal2023introduction,hu2023projected}.}

\begin{defi}
\label{def:M-stationary}
    $(x^*,y^*)\in\cM\times Y$ is a stationary point for the minimax problem in \eqref{prob} if
\begin{align}
\label{eq:stationarity}
    \grad_x f(x^*,y^*) = 0,\quad 
    0\in -\nabla_y f(x^*,y^*) + \rv{\partial 
    h(y^*)};
\end{align}
\rv{and $(\xe,\ye)\in\cM\times Y$ is an $\epsilon$-stationary point, if $\norm{\grad_x f(\xe,\ye)}\leq \epsilon$ and ${\rm dist}\Big(0,-\nabla_y f(\xe,\ye) + \rv{\partial 
    h(\ye)}\Big)\leq \epsilon$.}
\end{defi}

Note that because of the \sa{penalty term $\rv{P(x)}:=\frac{\rho}{4}\|c(x)\|^2$ used within $\tilde f$}, \sa{$\nabla\tilde{f}$ may not 
be Lipschitz on $\cX\times \cY$ while $\nabla f$ is.} For example, when $\Mcal$ is the Stiefel manifold, \sa{\rv{we set} $c(x)=x^\top x-I_r$; \rv{hence,} the corresponding penalty term $P(\cdot)$ is quartic and $\nabla P(x)= \rho x(x^\top x-I_r)$ is not Lipschitz on \rv{$\cX=\reals^{d_1\times r}$}.} However, since we are interested in \sa{stationary points $(x^*,y^*)\in\cM\times Y$ as in \eqref{eq:stationarity} with $\cM$ being a compact submanifold of \rv{$\reals^{d_1\times r}$},} 
instead of \eqref{prob:pen1}, we will consider the following reformulation: 
\be \label{prob:pen} \min_{x \in X\subset\cX} \max_{y \in \cY} \rv{\tilde{f}_r(x,y)},  \ee
where $X:=\{x \in \rv{\R^{d_1\times r}}: \|x\| \leq C\}$ \sa{for some 
$C>\frac{1}{2}+\sup_{x\in\cM}\norm{x}$ --for this reformulation, trivially $\nabla\tilde{f}$ is Lipschitz on \rv{the new domain $X\times Y$ that is compact}, and $-\tilde f_r(x,\cdot)$ is 
\rv{$\mu$-PL (or convex) for every fixed $x\in X$ since $-f_r(x,\cdot)$ is 
assumed to be $\mu$-PL (or convex)} for all $x\in 
\rv{\bar X}$.} 
\sa{It will be shown later that as long as \rv{the constant $C>0$ is sufficiently large, i.e., $C>\frac{1}{2}+\sup_{x\in\cM}\norm{x}$,} the choice does not affect the set of limit points of the proposed algorithm. 
Let $l(C)$ denote the Lipschitz constant of $\nabla \tilde f$ on $X\times Y$ --for simplicity of 
notation, we will suppress its dependence on $C$ in the rest and use $l$ instead.}

\sa{In the rest of this paper, for notational convenience we focus on the Stiefel manifold to derive our results. That said, as mentioned in the introduction, we discuss their extensions to 
\rv{some other important compact manifolds} with $c\in \mathcal{C}^2$; more specifically, the \textit{oblique manifold} and the \textit{generalized Stiefel manifold} will be discussed in \cref{sec:extensions}.}  

\begin{lemma} \label{lem:lip-f}
\sa{Let $\cM\subset\cX$ be the Steifel manifold, i.e., $c(x)=x^\top x-I_r$, 
and let $X=\{x \in \R^{d_1\times r}: \|x\| \leq C\}$ for $C>0$ as in Definition~\ref{def:bar_X}. Under Assumption~\ref{assum:lip-f}, the function $\tilde{f}$ defined in \eqref{prob:pen1} with $A(x) = x(\frac{3}{2}I_r - \frac{1}{2}x^\top x)$ is differentiable on an open set containing \rv{$X \times Y$}, and there exist constants $l_{xx},l_{xy},l_{yx},l_{yy}\geq 0$ \rv{such that $l_{yy}=L_{yy}$, $l_{yx}=\cO(C^2 L_{yx})$, $l_{xy}=\cO(C^2 L_{xy})$ and $l_{xx}=\cO(C^2 \rho + C^4 L_{xx})$ and that} for all $x_1, x_2 \in X$ and $y_1, y_2 \in Y$, it holds that
$$
\begin{aligned}
& \left\|\nabla_x \tilde{f}\left(x_1, y_1\right)-\nabla_x \tilde{f}\left(x_2, y_2\right)\right\| \leq l_{xx}\left\|x_1-x_2\right\|+ l_{xy}\left\|y_1-y_2\right\|, \\
& \left\|\nabla_y \tilde{f}\left(x_1, y_1\right)-\nabla_y \tilde{f}\left(x_2, y_2\right)\right\| \leq l_{yx}\left\|x_1-x_2\right\|+ l_{yy}\left\|y_1-y_2\right\|.
\end{aligned}
$$
Clearly, $\nabla \tilde f$ is Lipschitz on $X\times Y$ with constant $l:=\max\{l_{xx},l_{xy},l_{yx},l_{yy}\}$.}
\end{lemma}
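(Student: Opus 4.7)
The plan is to compute the full gradient of $\tilde f$ via the chain rule, exploit that the domain $X$ is compact (hence $A(X)$, $c(X)$ and $\nabla_x f(A(X),Y)$ are all bounded), and then derive the four partial Lipschitz constants by a triangle-inequality splitting of the operator-valued product $\nabla A(x)[\nabla_x f(A(x),y)]$.

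First I would collect the basic bounds on the building blocks over the compact set $X=\{x:\|x\|\le C\}$. A direct expansion using the identity
\begin{equation*}
x_1 x_1^\top x_1-x_2 x_2^\top x_2 = x_1 x_1^\top (x_1-x_2) + x_1 (x_1-x_2)^\top x_2 + (x_1-x_2) x_2^\top x_2
\end{equation*}
shows $\|A(x_1)-A(x_2)\|\le (\tfrac{3}{2}+\tfrac{3}{2}C^2)\|x_1-x_2\|$, so $A$ is Lipschitz with constant $L_A=\mathcal{O}(C^2)$ on $X$, and consequently $A(X)\subset \bar X$ by the choice of $C$ in Definition~\ref{def:bar_X}. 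Similarly, $\|c(x)\|\le C^2+\sqrt{r}$ and $\|c(x_1)-c(x_2)\|\le 2C\|x_1-x_2\|$. For the operator $\nabla A(x)[u]=u(\tfrac{3}{2}I_r-\tfrac{1}{2}x^\top x)-x\,\mathrm{sym}(x^\top u)$, one immediately reads off $\|\nabla A(x)\|_{\mathrm{op}}=\mathcal{O}(C^2)$, and by the same splitting trick applied to $x\,\mathrm{sym}(x^\top u)$ one obtains $\|\nabla A(x_1)-\nabla A(x_2)\|_{\mathrm{op}}\le C_1 C\,\|x_1-x_2\|$ for an absolute constant $C_1$.

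Next I would handle $\nabla_y\tilde f$, which is the easy half. Since $\nabla_y\tilde f(x,y)=\nabla_y f(A(x),y)$, Assumption~\ref{assum:lip-f} together with the Lipschitz bound on $A$ gives
\begin{equation*}
\|\nabla_y\tilde f(x_1,y_1)-\nabla_y\tilde f(x_2,y_2)\| \le L_{yx}L_A\|x_1-x_2\|+L_{yy}\|y_1-y_2\|,
\end{equation*}
yielding $l_{yx}=\mathcal{O}(C^2 L_{yx})$ and $l_{yy}=L_{yy}$, as claimed.

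For $\nabla_x\tilde f$ I would split into the penalty and the coupling:
\begin{equation*}
\nabla_x\tilde f(x,y) \;=\; \nabla A(x)\bigl[\nabla_x f(A(x),y)\bigr] + \rho\,x(x^\top x-I_r).
\end{equation*}
The penalty gradient is a cubic polynomial in $x$; the same cubic identity used for $A$ yields Lipschitz constant $\mathcal{O}(\rho C^2)$. For the coupling, I would use the operator-product triangle inequality
\begin{align*}
&\|\nabla A(x_1)[\nabla_x f(A(x_1),y_1)]-\nabla A(x_2)[\nabla_x f(A(x_2),y_2)]\|\\
&\quad\le \|\nabla A(x_1)\|_{\mathrm{op}}\,\|\nabla_x f(A(x_1),y_1)-\nabla_x f(A(x_2),y_2)\| \\
&\qquad+ \|\nabla A(x_1)-\nabla A(x_2)\|_{\mathrm{op}}\,\|\nabla_x f(A(x_2),y_2)\|,
\end{align*}
and bound the second factor of the first term by $L_{xx}L_A\|x_1-x_2\|+L_{xy}\|y_1-y_2\|$, while in the second term $\|\nabla_x f(A(x_2),y_2)\|$ is uniformly bounded on the compact set $\bar X\times Y$ by continuity. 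Combining with the $\mathcal{O}(C^2)$ operator bound and the $\mathcal{O}(C)$ operator-Lipschitz bound for $\nabla A$, and adding the penalty contribution, gives $l_{xx}=\mathcal{O}(\rho C^2+C^4 L_{xx})$ and $l_{xy}=\mathcal{O}(C^2 L_{xy})$.

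The main obstacle is purely bookkeeping: propagating the operator norm $\mathcal{O}(C^2)$ and the operator-Lipschitz constant $\mathcal{O}(C)$ of $\nabla A$ correctly through the product, so that the quartic penalty and the quartic composition both yield the advertised $C^4$ scaling without hidden dependence on $\rho$ in the lower-order terms. No concavity or PL structure is used in the proof; the result is a pure calculus/Lipschitz estimate relying only on Assumption~\ref{assum:lip-f} and the compactness of $X$ and $Y$.
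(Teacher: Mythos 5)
Your proposal is correct and follows essentially the same route as the paper's proof: the same gradient formulas, the same uniform bounds $\|\nabla A(x)\|_{\mathrm{op}}=\mathcal{O}(C^2)$ and $\|\nabla A(x_1)-\nabla A(x_2)\|_{\mathrm{op}}=\mathcal{O}(C)\|x_1-x_2\|$, the same two-term splitting of the product $\nabla A(x)[\nabla_x f(A(x),y)]$, the same uniform bound on $\|\nabla_x f(A(x),y)\|$ over the compact set, and the same Lipschitz estimate for the cubic penalty gradient $\rho\,x(x^\top x-I_r)$. The only cosmetic difference is that you bound the Lipschitz constant of $A$ by a direct algebraic expansion while the paper reads it off from $\sup_{x\in X}\|\nabla A(x)\|_{\mathrm{op}}$; these are equivalent.
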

\begin{proof}
    \rv{See Section~\ref{sec:Lipschitz-proof}.}
\end{proof}
\begin{remark} \label{rmk:lip}
    If the condition $\|x\| \leq C$ in the definition of $X$ is replaced by $\|x\|_2 \leq C$, the expressions for the Lipschitz constants $l_{xx}, l_{xy}, l_{yx}, l_{yy}$ in \cref{lem:lip-f} remain the same. 
\end{remark}
\begin{remark}
    The Lipschitz constants of $\tilde{f}$ calculated in the proof of \cref{lem:lip-f} are specific to the case where $\Mcal$ is the Stiefel manifold, i.e., $c(x)=x^\top x-I_r$. That said, they can be easily extended to 
   \rv{other compact smooth submanifolds $\Mcal=\{x\in\cX:\ c(x) = 0\}$ over $X\times Y$ whenever the defining function $c(\cdot)$ is twice continuous differentiable.} 
\end{remark}

\subsection{\sa{Smoothed AGDA~(\sm)} for solving problem \eqref{prob}}
\hj{The smoothed gradient descent ascent \rv{(sm-GDA)} methods \citep{zhang2020single,yang2022faster} perform single-loop updates on the primal and dual variables, together with an additional update on a smoothing variable. These methods are designed for problems with a nonconvex–concave or \rv{nonconvex–PL (strongly concave)} structure 
\rv{assuming} Lipschitz continuous gradients, and under such conditions, they achieve 
\sa{better iteration complexity than the} standard gradient descent ascent methods \rv{while computatinal cost per iteration being the same.}} 

It is known that every geodesically convex function over a compact manifold is a constant function \citep{bishop1969manifolds}. \sa{This indicates that for any fixed $y\in Y$ and $z\in X$, $\rv{{f}(x,y;z)} \triangleq \sa{f(x, y)} + \frac{p}{2}\|x - z\|^2$ is not geodesically convex in $x$ over $\cM$ for any $p > 0$.} Although we may use \rv{a retraction operator to design a 
smoothed descent-ascent type method as in \citep{zhang2020single,yang2022faster}}, its convergence analysis 
\sa{would be very intricate} because \sa{for any given $z\in X$, 
the duality gap between $\min_{x\in\cM}\max_{y\in Y} \rv{f}(x,y;z)$ and $\max_{y\in Y} \min_{x\in\cM} \rv{f}(x,y;z)$ 
is not necessarily 
\textit{zero}} due to the lack of (geodesic) convexity.  This motivates us to use more advanced tools to tackle the  manifold constraint \sa{which leads to a nonconvex minimax problem.} 

Specifically, we consider the penalized problem \rv{in} \eqref{prob:pen}, which employs an exact penalty function \rv{$\tilde f_r(\cdot,y)$ for any $y\in Y$} to eliminate the manifold constraint $x\in \cM$. By introducing \sa{a norm 
ball constraint 
$x\in X$,} we 
\rv{avoid potential issues that would arise from the gradient of the penalty term $ \frac{\rho}{\sa{4}}\|c(x)\|^2 $ not being Lipschitz continuous over $\cX$. In this setup, we 
study the convergence behavior of smoothed gradient descent ascent 
iterate sequence~\citep{zhang2020single,yang2022faster} to a stationary point of the original problem when 
sm-GDA is applied on the penalty problem in~\eqref{prob:pen}. Indeed,  we incorporate the geometry of the manifold through using an exact penalty function, and we refer this particular implementation of sm-GDA framework on \eqref{prob:pen} as \sm{}}. 

Let $p\in\reals_+$ such that $p>l$, e.g., $p=2l$, and define 
\[ \hat{f}(x,y;z) := \tilde{f}(x,y) + \frac{p}{2}\|x - z\|^2,\qquad~\rv{\forall~x\in X,\ y\in Y,\ z\in \cX,} \]
where $\frac{p}{2}\|x-z\|^2$ serves as a regularizer for dual smoothing, \rv{inspired by the Moreau-Yosida regularization, a.k.a. the Nesterov's smoothing~\citep{nesterov2005smooth}.} Our proposed method, \sm{}, is presented in Algorithm \ref{alg:sm-agda-stiefel}, which consists of alternating $x,y$ updates: one projected gradient descent in $x$ \rv{using $\nabla_x \hat f(x_t,y_t;z_t)$}, one 
\rv{proximal} gradient ascent in $y$ \rv{using $\nabla_y \hat f(x_{t+1},y_t;z_t)$}, and an averaging step in $z$ \rv{to get $z_{t+1}$}. 
\rv{On the other hand,} for the case $\mu = 0$, 
\rv{rather than employing dual smoothing, we set $p=0$ and incorporate an additional regularization term on $y$ for primal smoothing,} 
following \citep{xu2023unified,lu2020hybrid}. Indeed, \rv{for the scenario $\mu=0$, rather than computing an ascent step based on $\tilde f(x_{t+1},\cdot)$ for the $y$-update, we consider $\tilde f_{\theta_t}(x_{t+1},\cdot)$ where $\tilde f_{\theta}(x,y)=\tilde f(x,y)-\frac{\theta}{2}\norm{y}^2$ for $x\in X$ and $y\in Y$, i.e., we add a regularization term $-\frac{\theta_t}{2}\norm{y}^2$ to $\tilde{f}$, and use $\nabla_y \tilde f_{\theta_t}(x_{t+1},y_t)$ rather than $\nabla_y \hat f(x_{t+1},y_t;z_t)=\nabla_y \tilde f (x_{t+1},y_t)$,
\rv{where the parameter sequence $\{\theta_t\}\subset\reals_+$} is diminishing to $0$.} In our theoretical analysis, \rv{we show that the limit points of \sm{} iterate sequence are stationary points of  the original problem \eqref{prob} in the sense of Definition~\ref{def:M-stationary}} under appropriate choices of the parameter \rv{$C>0$ appearing in the definition of $X$} and the penalty parameter $\rho$ in $\tilde{f}$.
\begin{remark}
    One can avoid computing/estimating the Lipschitz constant $l$ by adopting the line-search strategy of \citep{zhang2024agda+}, which employs a carefully designed nonmonotone stepsize-search criterion and requires at most 3 backtracking steps per iteration. \rv{This type of extension would be helpful in practice as it can exploit local curvature through estimating local Lipschitz constants, which would lead to larger steps.}
\end{remark}




\begin{algorithm}[h] 
\caption{Smoothed MGDA \sa{(\sm)}}
\begin{algorithmic}[1]
\STATE \textbf{Input}: $(x_0, y_0, z_0) \in \Mcal \times Y \times \Mcal$, 
 $\{\theta_t\},\{\tau_{1,t}\}\subset\R_+$, $\tau_2, p, \rho, C > 0$, $\beta\in(0,1]$ 

\FOR{$t = 0, 1, 2, \ldots, T - 1$}
    \STATE \sa{$g_t\gets \nabla A(x_t)[\nabla_x f(A(x_t), y_t)] + \frac{\rho}{2} \nabla c(x_t)[c(x_t)]$}
    \STATE \sa{$x_{t+1} \gets \pcal_{X}\left(x_t - \sa{\tau_{1,t}} \Big(
    g_t+ p(x_t - z_t)\Big)\right)$}
    \STATE $y_{t+1} \gets {{\rm prox}_{\tau_2 h}}\Big(y_t + \tau_2 \Big(\nabla_y f(A(x_{t+1}), y_t)\sa{-\theta_t y_t}\Big)\Big)$
    \STATE $z_{t+1} = z_t + \beta (x_{t+1} - z_t)$
\ENDFOR

\STATE \textbf{Output}: 
$\{(x_t, y_t)\}_{t=0}^{T-1}$.
\end{algorithmic} \label{alg:sm-agda-stiefel}
\end{algorithm}

\subsection{$\epsilon$-stationary points of the penalty problem}
\sa{Following} \citep[Definition 3.1]{zhang2020single}, we call \sa{$(x_\epsilon,y_\epsilon)\in X\times Y$} an $\epsilon$-stationary point of \eqref{prob:pen} if
\be \label{eq:stationary} {\rm dist}\Big(0, \nabla_x \tilde{f}(x_\epsilon,y_\epsilon) + \partial \delta_{X}(x_\epsilon) \Big)\leq \epsilon, \;\; \sa{{\rm dist}\Big(0, -\nabla_y \tilde{f}(x_\epsilon,y_\epsilon) +\partial {h}(y_\epsilon) \Big)\leq \epsilon}.  \ee

\sa{\rv{Given $(\xe,\ye)\in X\times Y$ such that $\|\xe\| < C$, then $0 \in \nabla_x \tilde{f}(\xe,\ye) + \partial \delta_{X}(\xe)$ 
if and only if} $\nabla_x \tilde{f}(\xe,\ye)=0$; hence, if \eqref{eq:stationary} holds with $\epsilon = 0$ for some $(\xe,\ye)\in X\times Y$ such that $\|\xe\| < C$, then
\be \label{eq:stationary-org} \nabla_x \tilde{f}(\xe, \ye) = 0,\quad 0 \in -\nabla_y \tilde f(\xe,\ye) + \partial {h}(\ye). \ee
Thus, it follows from \eqref{eq:stationary-equiv} that for $\rho>0$ large enough and $C>\delta+\sup_{x\in\cM}\norm{x}$, there exists $\delta>0$ such that if ${\rm dist}(\xe, \Mcal) \leq \delta$ and \eqref{eq:stationary} holds with $\epsilon=0$, then $(\xe,\ye)\in\cM\times Y$ and it is a stationary point for the original manifold constrained minimax problem in \eqref{prob}, i.e., \eqref{eq:stationarity} holds with $(x^*,y^*)=(\xe, \ye)$.}

\hj{In practice, since the algorithm runs for only a finite number of iterations, we are also  interested in how the $\epsilon$-stationary point of the penalized problem relates to that of the original manifold-constrained problem. The following lemma describes this connection \rv{for the Steifel manifold.}}
\begin{lemma} \label{lem:equiv-stationary}
    \rv{Let $\cM\subset\reals^{d_1\times r}$ be the Steifel manifold.} For any given $\epsilon>0$, let $(x_\epsilon, y_\epsilon)\in\cX\times Y$ be such that $\| \nabla_x \tilde{f}(x_\epsilon, y_\epsilon)\| \leq \epsilon$, $\rv{{\rm dist}(0, -\nabla_y \tilde{f}(x_\epsilon,y_\epsilon) +\partial {h}(y_\epsilon) )\leq \epsilon}$ 
    and ${\rm dist}(x_\epsilon, \Mcal) \leq \frac{1}{2}$, then 
    \[ 
    \begin{aligned}
        &\| x_\epsilon - \Pcal_{\Mcal}(x_\epsilon)\| \leq \frac{3}{\rho} \epsilon, \quad \|\sa{\grad_x} f(\Pcal_{\Mcal}(x_\epsilon), y_\epsilon)\| \leq  \rv{\epsilon + \frac{11}{\rho}\Big(L_{xx}+L_x(\ye)\Big)\epsilon},\\
    &
    \rv{{\rm dist}\Big(0, -\nabla_y {f}(\Pcal_{\Mcal}(x_\epsilon),y_\epsilon) +\partial{h}(y_\epsilon) \Big)}\leq \Big(1 + \frac{3}{\rho} L_{yx} \Big) \epsilon,
    \end{aligned}
    \]
    whenever $\rho \geq \rv{36 L_x(\ye)}$, where $\rv{L_x(y)}:= \max\{\|\nabla_x f(x,y)\|:\ \|x\|_2 \leq 
    \rv{1}\}$ is defined for $y\in Y$. \rv{Thus, $(\Pcal_{\Mcal}(x_\epsilon),y_\epsilon)\in\cM\times Y$ is an $\cO(\epsilon)$-stationary point for the minimax problem in \eqref{prob} in terms of Definition~\ref{def:M-stationary}.}
\end{lemma}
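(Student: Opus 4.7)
Let $\hat x := \Pcal_{\cM}(x_\epsilon)$ and write the thin SVD $x_\epsilon = U\Sigma V^\top$ with $V\in\reals^{r\times r}$ orthogonal, so that $\hat x = UV^\top$. The hypothesis ${\rm dist}(x_\epsilon, \cM)\leq 1/2$ forces every singular value $\sigma_i = \sigma_i(x_\epsilon)$ into $[1/2, 3/2]$; analyzing the Newton-type map coordinate-wise one also checks $\|A(x_\epsilon)\|_2\leq 1$, so that $\|\nabla_x f(A(x_\epsilon), y_\epsilon)\|\leq L_x(y_\epsilon)$. Two orthogonality facts, special to the Stiefel choice $c(x)=x^\top x-I_r$ and $A(x)=x(\tfrac{3}{2}I_r - \tfrac{1}{2}x^\top x)$, drive the entire argument: (i) at any $\hat x\in\cM$, $\nabla A(\hat x)[v] = v - \hat x\,\operatorname{sym}(\hat x^\top v)$ is exactly the Euclidean-to-tangent projector $\Pcal_{T_{\hat x}\cM}$, and since $c(\hat x)=0$ this yields the identity $\nabla_x\tilde f(\hat x, y_\epsilon) = \grad_x f(\hat x, y_\epsilon)$; (ii) a direct SVD computation gives $x_\epsilon c(x_\epsilon) = U\Sigma(\Sigma^2-I)V^\top = \hat x \cdot V\Sigma(\Sigma^2-I)V^\top$, and the right-hand factor is symmetric (being $V$-conjugation of a diagonal matrix), so $\rho\, x_\epsilon c(x_\epsilon)\in N_{\hat x}\cM$ and is annihilated by $\Pcal_{T_{\hat x}\cM}$.

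\textbf{Distance to the manifold.} I would project the stationarity bound $\|\nabla_x\tilde f(x_\epsilon, y_\epsilon)\|\leq\epsilon$ onto $N_{\hat x}\cM$. By (ii) the penalty term $\rho\,x_\epsilon c(x_\epsilon)$ is preserved, and by (i) the map $v\mapsto\nabla A(\hat x)[v]$ has zero normal component; hence $\Pcal_{N_{\hat x}\cM}\bigl(\nabla A(x_\epsilon)[v]\bigr)$ reduces to $\Pcal_{N_{\hat x}\cM}\bigl((\nabla A(x_\epsilon)-\nabla A(\hat x))[v]\bigr)$ and is bounded by $L_{\nabla A}\,\|x_\epsilon-\hat x\|\,L_x(y_\epsilon)$, where $L_{\nabla A}$ is a small dimensionless constant obtained by explicitly differencing $\nabla A$ on the region $\|x\|_2\leq 3/2$. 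Together with the SVD lower bound $\|x_\epsilon c(x_\epsilon)\|\geq \tfrac{3}{4}\|x_\epsilon-\hat x\|$ (each factor $\sigma_i(\sigma_i+1)\geq 3/4$ on $[1/2,3/2]$), the triangle inequality and the assumption $\rho\geq 36\,L_x(y_\epsilon)$ (which absorbs the $L_x(y_\epsilon)\|x_\epsilon-\hat x\|$ term into the leading $\rho\|x_\epsilon-\hat x\|$ term) deliver $\|x_\epsilon-\hat x\|\leq 3\epsilon/\rho$.

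\textbf{Riemannian and dual stationarity.} For the primal bound, I apply $\Pcal_{T_{\hat x}\cM}$: fact (ii) kills the penalty summand of $\nabla_x\tilde f(x_\epsilon,y_\epsilon)$, and fact (i) identifies $\nabla_x\tilde f(\hat x, y_\epsilon)$ with $\grad_x f(\hat x,y_\epsilon)\in T_{\hat x}\cM$, giving
\[
\|\grad_x f(\hat x, y_\epsilon)\|\leq \|\nabla_x\tilde f(x_\epsilon,y_\epsilon)\| + \|\nabla A(x_\epsilon)[\nabla_x f(A(x_\epsilon), y_\epsilon)] - \nabla A(\hat x)[\nabla_x f(\hat x, y_\epsilon)]\|.
\]
The second norm telescopes (via the product-rule split over the two arguments) into (a) $L_{\nabla A}\,L_x(y_\epsilon)\|x_\epsilon-\hat x\|$ and (b) $\|\nabla A(\hat x)\|_{\rm op}\,L_{xx}\,\|A(x_\epsilon)-\hat x\|$; the Newton-type retraction satisfies the quadratic estimate $\|A(x_\epsilon)-\hat x\|\leq\tfrac{7}{4}\|x_\epsilon-\hat x\|^2\leq\tfrac{7}{8}\|x_\epsilon-\hat x\|$, so plugging in $\|x_\epsilon-\hat x\|\leq 3\epsilon/\rho$ and tracking the absolute constants yields the claimed $\epsilon + \tfrac{11}{\rho}(L_{xx}+L_x(y_\epsilon))\epsilon$. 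For the dual, $\nabla_y\tilde f(x,y)=\nabla_y f(A(x),y)$, so picking $\xi\in\partial h(y_\epsilon)$ with $\|\xi-\nabla_y\tilde f(x_\epsilon,y_\epsilon)\|\leq\epsilon$ and using the same quadratic retraction bound $\|A(x_\epsilon)-\hat x\|\leq 3\epsilon/\rho$ gives $\|\xi-\nabla_y f(\hat x,y_\epsilon)\|\leq(1+3L_{yx}/\rho)\epsilon$.

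\textbf{Main obstacle.} The delicate point is that a naive Lipschitz bound on $\nabla_x\tilde f$ scales like $\cO(\rho)$ (because the penalty is cubic in $x$), which would yield only $\cO(\epsilon)$ control and destroy the desired $\cO(\epsilon/\rho)$ error terms. The resolution is precisely facts (i)--(ii): the $\rho$-dependent penalty must be shown to live \emph{exactly} in the normal bundle at $\Pcal_\cM(x)$ and $\nabla A|_\cM$ must be recognized as a tangent projector, so that tangent/normal decompositions annihilate the $\rho$-dependent terms on the nose rather than up to first order. Tightening the absolute constants in the distance bound to give exactly $3/\rho$ -- which then feeds cleanly into the other two bounds -- is what dictates the explicit threshold $\rho\geq 36\,L_x(y_\epsilon)$.
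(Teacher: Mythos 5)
Your proposal is correct, and it reorganizes the argument around a different decomposition than the paper's. The paper works entirely at $x_\epsilon$: it expands $\|\nabla_x\tilde f(x_\epsilon,y_\epsilon)\|^2=\|G\|^2+\rho^2\|x_\epsilon c(x_\epsilon)\|^2+2\rho\langle G, x_\epsilon c(x_\epsilon)\rangle$, computes the cross term explicitly as $-3\rho\,\langle \operatorname{sym}(x_\epsilon^\top\nabla_x f(A(x_\epsilon),y_\epsilon)),(x_\epsilon^\top x_\epsilon-I_r)^2\rangle$ (second order in the constraint violation), and uses $\rho\geq 36 L_x(y_\epsilon)$ to absorb it, obtaining simultaneously $\|G(x_\epsilon,y_\epsilon)\|\leq\epsilon$ and $\|x_\epsilon c(x_\epsilon)\|\leq \tfrac{\sqrt2}{\rho}\epsilon$; it then compares $\grad_x f(\Pcal_\cM(x_\epsilon),y_\epsilon)$ to $G(x_\epsilon,y_\epsilon)$ by a term-by-term triangle inequality. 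You instead split at $\hat x=\Pcal_\cM(x_\epsilon)$ into tangent and normal components, using the two exact facts that $x_\epsilon c(x_\epsilon)=\hat x\,V\Sigma(\Sigma^2-I)V^\top\in N_{\hat x}\cM$ and that $\nabla A(\hat x)$ is the orthogonal projector onto $T_{\hat x}\cM$; the normal projection yields the distance bound and the tangent projection yields the Riemannian-gradient bound. Both routes rest on the same underlying near-orthogonality of the penalty gradient and the dissolved-gradient term, but yours makes the geometric mechanism explicit and would transfer more directly to other constraint-dissolving operators with the same normal-bundle property, whereas the paper's squared-norm expansion is a single self-contained algebraic computation whose by-product $\|G(x_\epsilon,y_\epsilon)\|\leq\epsilon$ it then reuses. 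All of your quantitative claims check out: $\|x_\epsilon c(x_\epsilon)\|\geq\tfrac34\|x_\epsilon-\hat x\|$ and $\|A(x_\epsilon)-\hat x\|=\tfrac12\|(\Sigma-I)^2(\Sigma+2I)\|\leq\tfrac78\|x_\epsilon-\hat x\|$ on $[\tfrac12,\tfrac32]$, the difference bound on $\nabla A$ gives a constant of at most $\tfrac92$ on $\{\|x\|_2\leq\tfrac32\}$, and with $\rho\geq36L_x(y_\epsilon)$ the normal-projection inequality closes to give $\|x_\epsilon-\hat x\|\leq\tfrac{8}{5\rho}\epsilon\leq\tfrac{3}{\rho}\epsilon$, which then makes the stated constant $11$ comfortable in the tangent bound; the only caution is that if you use the rounded-up $\tfrac{3}{\rho}\epsilon$ rather than the sharper $\tfrac{8}{5\rho}\epsilon$ in the second stage, the $L_x(y_\epsilon)$ coefficient lands slightly above $11$, so keep the tighter intermediate constant when finalizing the bookkeeping.
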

\begin{proof}
    \rv{See Section~\ref{sec:eps-stationary-connection-proof}.}
\end{proof}
For any given $\epsilon>0$, in order to compute $(\xe,\ye)\in X\times Y$ satisfying the hypothesis of Lemma~\ref{lem:equiv-stationary} we will employ \sm{} on the penalty problem in~\eqref{prob:pen} with a properly chosen parameter $C>0$, i.e., \rv{the choice of $C>0$ is not arbitrary and it should be chosen carefully depending on $\delta$ in order to exclude the possibility of computing an 
$(x_\epsilon,y_\epsilon)\in X\times Y$ as in \eqref{eq:stationary} with $\|x_\epsilon\|= C$. Indeed, Lemma~\ref{lem:equiv-stationary} shows that when $\cM\subset\reals^{d_1\times r}$ is the Steifel manifold, any $C>\frac{1}{2}+r$ works as $\delta=1/2$ and $\sup_{x\in\cM}\norm{x}=\sqrt{r}$.}

\section{Convergence analysis}
\label{sec:analysis}

In this section, we present the convergence guarantees of our \sm{} algorithm under \rv{three different scenarios: We assume that \rv{$-f_r(x,\cdot)$} satisfies one of the following conditions uniformly for all $x\in \bar X$: \textit{(i)} $\mu$-PL 
(see \cref{assum:sc}), 
\textit{(ii)} $\mu$-strongly convex, and \textit{(iii)} merely convex, i.e., $\mu=0$.}
\begin{defi}
    \rv{Let $-\infty<\bar F:=\min_{x\in X}F(A(x))$ where $F(\cdot)=\max_{y\in\cY}f_r(\cdot,y)$.}
\end{defi}
\subsection{
\rv{Complexity for nonconvex-PL problems}}
\rv{In this section, we establish the convergence guarantees 
for computing an $\epsilon$-stationary point of the manifold minimax problem in~\eqref{prob}, and we also provide asymptotic convergence results. To achieve this goal, we first extend the analysis of \citep{yang2022faster} to handle weakly convex (possibly non-smooth) regularizer $h(\cdot)$ --the method proposed in~\citep{yang2022faster} can only handle \textit{smooth} problems of the form $\min_{x\in\cX}\max_{y\in\cY}f(x,y)$ without any manifold constraint, where $f$ is a smooth function satisfying \cref{assum:lip-f} on the vector space $\cX\times \cY$ such that $-f(x,\cdot)$ is $\mu$-PL for all $x\in\cX$.} 

\rv{We next provide a convergence rate result for \sm{} 
under Assumptions~\ref{assum:Y},\ref{assum:lip-f} and \ref{assum:sc}. 
For this setting there is no clear relation between $l$ and $\mu$; hence, we define the modified condition number $\bar\kappa :=\max\{1,\kappa\}$ where $\kappa:=l/\mu$. Furthermore, we also define some other important quantities arising in our analysis.}
\begin{defi}
    For any $z\in\cX$, let $x^*(z):=\argmin_{x \in \sa{X}} \Phi(x ; z)$ and ${Y}^*(z):=\argmax_{y \in {\cY}} \Psi_r(y;z)$, where $\Phi(x ; z):=\max_{y\in\cY}\hat f_r(x,y;z)$ defined for any $x\in X$, $\Psi_r(y;z):=\rv{\Psi(y ; z)-h(y)}$ and $\Psi(y ; z):=\min_{x\in \sa{X}} \hat{f}(x, y ; z)$ defined for any $y\in \rv{Y}$. Finally, $P(z):=\min_{x\in X}\Phi(x;z)$ for $z\in\cX$.
\end{defi}
\begin{thm} \label{thm}
    \sa{Suppose Assumptions~\ref{assum:Y},\ref{assum:lip-f} and \ref{assum:sc} hold. For any given $C>0$, let $X=\{x\in\cX:\norm{x}\leq C\}$, and let $\{x_t,y_t,z_t\}_{t\geq 0}$ be the \sm~iterate sequence generated by Algorithm~\ref{alg:sm-agda-stiefel}, initialized from an arbitrary $(x_0,y_0,z_0)\in X\times 
    \rv{Y}\times \cX$, using the following parameters: $\tau_{1,t}=\tau_1$ and $\theta_t=0$ for all $t\geq 0$ for some 
    $\tau_1\in(0,\frac{1}{\rv{3l}}]$, $\tau_2=\rv{\frac{1}{16}(\frac{3}{\tau_1}+\zeta)^{-1}}$, $p=2 l$ and $\beta=\alpha\min\{\mu,l\}\tau_2$ for some {$\alpha\in(0,1/2306)$}, where the constant $l>0$ is defined in \cref{lem:lip-f}.} \rv{Then, for any $T\geq 1$, it holds that
    \begin{equation}
    \label{eq:rate-result-PL}
        \frac{1}{T} \sum_{t=1}^{T} \Big(\left\| G_{t}^x \right\|^2 +\rv{\bar\kappa} \left\| G_{t}^y \right\|^2\Big) \leq \frac{O(1) \rv{\bar\kappa}}{T}\rv{\Big(\frac{1}{\tau_1}+\zeta\Big)\Big(P(z_0)-\bar F+\Delta_0\Big)},
    \end{equation}
    where $\Delta_0:=\hat{f}_r(x_0, y_0 ; z_0)+P(z_0)-2\Psi_r(y_0 ; z_0)$, and for all $t\geq 0$, $G_t^x,G_t^y$ are defined as 
    \begin{equation}
\label{eq:subgradient-G}
\begin{aligned}
    G_{t+1}^x &:= \frac{x_{t} - x_{t+1}}{\tau_1} + \nabla_x \tilde f(x_{t+1}, y_{t+1}) - \nabla_x \tilde f(x_t,y_t) + p(z_t - x_t),\\
    G_{t+1}^y &:= \frac{y_{t+1}-y_{t}}{\tau_2} + \nabla_y \tilde f(x_{t+1}, y_{t+1}) - \nabla \tilde f(x_{t+1}, y_t).
\end{aligned}
\end{equation}
Furthermore, $G_t^x\in\nabla_x\tilde f(x_t,y_t)+\partial \delta_X(x_t)$ and $G_t^y\in\nabla_y\tilde f(x_t,y_t)-\partial h(y_t)$ for all $t\geq 1$, which also implies that
    \[ \min_{\rv{1} \leq t \leq T} \max\Big\{ {\rm dist}\Big(0, \nabla_x \tilde{f}(x_t,y_t) + \partial \delta_{X}(x_t)\Big), \; \sqrt{\bar \kappa} {\rm dist}\Big(0, -\nabla_y \tilde{f}(x_t,y_t)+\partial h(y_t) \Big)\Big\} \sa{=} \mathcal{O}\Big(\sqrt{\bar \kappa / T}\Big).\]
    Finally, $\Delta_0\geq 0$ can be bounded as $\Delta_0\leq 2~\rv{\operatorname{gap}_{\hat{f}_r}\left(x_0, y_0;z_0\right)}$, where $\rv{\operatorname{gap}_{\hat{f}_r}\left(x_0, y_0;z_0\right):=}
    \rv{\Phi(x_0;z_0)-\Psi_r(y_0;z_0)}$.}%
\end{thm}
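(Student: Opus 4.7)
The plan is to adapt the smoothed-GDA analysis of Yang--Kiyavash--He to accommodate two features absent from their smooth unconstrained setting: (i) a norm-ball projection $\mathcal{P}_X$ on $x$, and (ii) a possibly non-smooth, $\zeta$-weakly-convex regularizer $h$ on $y$. The target is a one-step decrease of a Lyapunov function $V_t$ whose total decrease telescopes to $P(z_0)-\bar F+\Delta_0$, after which the rate follows by standard averaging and the subgradient containments identified below convert the telescoped bound into the distance bounds in~\eqref{eq:stationary}.

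First, I would identify $G^x_{t+1}$ and $G^y_{t+1}$ as genuine subgradient residuals. From the first-order optimality condition of the $x$-projection in line~4 of Algorithm~\ref{alg:sm-agda-stiefel},
\[ \frac{x_t-x_{t+1}}{\tau_1}-\nabla_x\tilde f(x_t,y_t)-p(x_t-z_t)\in\partial\delta_X(x_{t+1}), \]
and adding $\nabla_x\tilde f(x_{t+1},y_{t+1})$ one obtains exactly the expression of $G^x_{t+1}$ from \eqref{eq:subgradient-G}, so $G^x_{t+1}\in\nabla_x\tilde f(x_{t+1},y_{t+1})+\partial\delta_X(x_{t+1})$. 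A parallel manipulation on the $y$-prox step in line~5, whose prox is single-valued because $\tau_2\zeta<1$ (guaranteed by the choice $\tau_2=\tfrac{1}{16}(\tfrac{3}{\tau_1}+\zeta)^{-1}$), gives $G^y_{t+1}\in\nabla_y\tilde f(x_{t+1},y_{t+1})-\partial h(y_{t+1})$. Once~\eqref{eq:rate-result-PL} is in hand these inclusions translate the averaged bound on $\|G^x_t\|^2+\bar\kappa\|G^y_t\|^2$ directly into the claimed $\min_{1\le t\le T}$ statement.

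Next, I would pick $V_t:=P(z_t)+\Delta_t$ with $\Delta_t:=\hat f_r(x_t,y_t;z_t)+P(z_t)-2\Psi_r(y_t;z_t)$, and prove a descent inequality $V_{t+1}\le V_t-c_x\|G^x_{t+1}\|^2-(c_y/\bar\kappa)\|G^y_{t+1}\|^2$ with constants $c_x,c_y$ proportional to $(\tfrac{1}{\tau_1}+\zeta)^{-1}$. Four building blocks would enter: (a) a sufficient-decrease estimate for the projected-gradient step on $\hat f(\cdot,y_t;z_t)$, exploiting that this map is $(p-l)$-strongly convex and $(p+l)$-smooth in $x$ when $p=2l$; (b) a proximal ascent estimate for $\tilde f(x_{t+1},\cdot)-h(\cdot)$ that absorbs the $\zeta$-weak convexity of $h$ through the $\zeta$-term in the step size together with the smoothness of $\tilde f$ from Lemma~\ref{lem:lip-f}; (c) a perturbation bound on $P(z_{t+1})-P(z_t)$ via Danskin's theorem ($P$ is $p$-smooth with $\nabla P(z)=p(z-x^\ast(z))$ because $\hat f(\cdot,y;z)$ is strongly convex when $p>l$) combined with $z_{t+1}-z_t=\beta(x_{t+1}-z_t)$ and the smallness $\beta=\mathcal{O}(\mu\tau_2)$; and (d) an increment estimate for $\Psi_r(y_{t+1};z_{t+1})-\Psi_r(y_t;z_t)$ using that the $\mu$-PL property of $-f_r(x,\cdot)$ transfers to $-\Psi_r(\cdot;z)$, so the error-bound/quadratic-growth implication of PL converts $\Psi_r(y^\ast(z);z)-\Psi_r(y_t;z)$ into $\mathcal{O}(\|G^y_{t+1}\|^2/\mu)$ up to cross-terms in $\|x_{t+1}-x_t\|$ and $\|z_{t+1}-z_t\|$.

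Telescoping the descent inequality from $t=0$ to $T-1$ and using the lower bound $V_t\ge\bar F$ --- which holds because $P(z_t)\ge\bar F$ (the exact-penalty construction and non-negativity of $\tfrac{p}{2}\|x-z\|^2$ give $\Phi(x;z)\ge F(A(x))\ge\bar F$) and because $\Delta_t\ge 0$ --- produces~\eqref{eq:rate-result-PL} after rescaling by $\bar\kappa$. For the final claim, writing $\Delta_0=[\hat f_r(x_0,y_0;z_0)-\Psi_r(y_0;z_0)]+[P(z_0)-\Psi_r(y_0;z_0)]$ shows $\Delta_0\ge 0$ (each bracket is non-negative by $\Psi_r(y;z)\le\hat f_r(x,y;z)$ and $\Psi_r\le P$), while upper-bounding each bracket by $\Phi(x_0;z_0)-\Psi_r(y_0;z_0)$ gives $\Delta_0\le 2\operatorname{gap}_{\hat{f}_r}(x_0,y_0;z_0)$. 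The hard part will be step~(d): absorbing the cross-terms that appear when telescoping $\Psi_r$ along $(y_t,z_t)\to(y_{t+1},z_{t+1})$ into the negative quadratics in the presence of the weak-convexity shift from $h$ and the merely-PL (rather than strongly concave) dual is the delicate bookkeeping, and it is what forces the small-constant conditions $\alpha<1/2306$ on $\beta$ and the $1/16$ factor in $\tau_2$; I expect this matching of constants to consume the bulk of the proof.
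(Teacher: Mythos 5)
Your plan follows the paper's proof essentially step for step: the same potential function $V_t=\hat f_r(x_t,y_t;z_t)-2\Psi_r(y_t;z_t)+2P(z_t)$, the same decomposition into projected-gradient sufficient decrease in $x$, prox-ascent in $y$ absorbing the $\zeta$-weak convexity of $h$, and a perturbation bound on $P(z_t)-P(z_{t+1})$, the same use of the PL property (via strong convexity of $\Phi(\cdot;z)$ in $x$ together with quadratic growth in $y$, which is the paper's Lemma~\ref{lem:bound-new}) to control the dual-side cross term, and the same telescoping against $V_t\ge \bar F$ plus the two-bracket decomposition giving $0\le\Delta_0\le 2\operatorname{gap}_{\hat f_r}(x_0,y_0;z_0)$. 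The only cosmetic differences are that you phrase the $P$-increment via the Moreau-envelope gradient rather than the paper's minimax-switch manipulation through $\Psi_r(y^*(z_{t+1});\cdot)$, and, as you correctly anticipate, the remaining work is precisely the constant bookkeeping carried out in \eqref{eq:v-de-1}--\eqref{eq:xsxs}.
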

\begin{proof}
    \rv{See \cref{sec:pl-proof}.}
\end{proof}
\begin{remark}
\label{rem:init}
If one chooses $x_0=z_0$ and $y_0\in Y^*(z_0)$ for any $z_0\in\cM$, then $\hat f_r(x_0,y_0;z_0)=f_r(z_0,y_0)\leq F(z_0)$. Moreover, 
choosing $y_0\in Y^*(z_0)$ implies that $P(z_0)+\Delta_0=\hat f_r(x_0,y_0;z_0)$ since $P(z_0)=\Psi_r(y_0;z_0)$ --see Lemma~\ref{lem:minmax}. Therefore, $P(z_0)+\Delta_0-\bar F\leq F(z_0)-\bar F$.

\rv{On the other hand, if one chooses $x_0=x^*(z_0)\in X$ and $y_0\in Y^*(z_0)$ for some $z_0\in\cM$, then one has $\Delta_0=0$ --see Lemma~\ref{lem:minmax}. Moreover, since $z_0\in \cM$, we have $P(z_0)=\min_{x\in X}\Phi(x;z_0)\leq \Phi(z_0;z_0)=F(z_0)$; therefore, $P(z_0)+\Delta_0-\bar F\leq F(z_0)-\bar F$ as well.}
\end{remark}
\sa{Next, we argue that for both $C, \rho>0$ sufficiently large, $\norm{x_t}<C$ for all $t\geq 0$; thus, \cref{thm} implies that $\nabla f(x^*,y^*)=0$ for any limit point $(x^*,y^*)$ of the \sm{} iterate sequence $\{(x_t,y_t)\}$.}
\begin{thm}
    \label{thm:mu-pl}
    \sa{Under the premise of \cref{thm}, \rv{suppose \sm{} is initialized from $(x_0,y_0,z_0)\in X\times {Y}\times \cX$ \rv{such that $y_0\in Y^*(z_0)$ and $x_0$ is set to either $z_0$ or $x^*(z_0)$ for some arbitrary $z_0\in\cM$}. Then,} 
    for any $C>\frac{1}{2}+\sup_{x\in\cM}\norm{x}$, there exists $\bar\rho>0$ such that within $\cO(\rv{\bar\kappa}/\epsilon^2)$ gradient evaluations, \emph{\sm{}} with given parameters $\rho\geq \bar\rho$ and $C>0$ can generate $(x_\epsilon,y_\epsilon)\in X\times \cY$ such that $\| x_\epsilon - \Pcal_{\Mcal}(x_\epsilon)\| \leq \frac{3}{\rho} \epsilon$, $\|\sa{\grad_x} f(\Pcal_{\Mcal}(x_\epsilon), y_\epsilon)\| = \mathcal{O}(\epsilon)$, and $\rv{{\rm dist}(0, -\nabla_y f(\Pcal_{\Mcal}(x_\epsilon), y_\epsilon) + \partial h(y_{\epsilon}))}= \mathcal{O}(\epsilon)$.} 
    
    Moreover, any limit point $(x^*,y^*)$ of the \emph{\sm} sequence $\{(x_t,y_t)\}_{t\geq 0}$ is a stationary point for the minimax problem in \eqref{prob}, i.e., $x^*\in\cM$, $\grad_x f(x^*, y^*) =0$ and \rv{$0\in-\nabla_y f(x^*, y^*)+\partial h(y^*)$}.
\end{thm}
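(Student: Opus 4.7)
The plan is to chain together three ingredients: the rate bound from Theorem~\ref{thm}, the penalty-to-manifold translation in Lemma~\ref{lem:equiv-stationary}, and an argument showing that the norm-ball constraint $\|x\|\leq C$ stays strictly inactive for $\rho$ large enough. First, I would verify that the constant appearing in the rate is finite. With either initialization prescribed in the theorem, Remark~\ref{rem:init} yields $P(z_0)+\Delta_0-\bar F \leq F(z_0)-\bar F$. Since $z_0\in\cM$, $F(z_0)=\max_{y\in Y}(f(z_0,y)-h(y))$ is finite by Assumption~\ref{assum:Y} and continuity of $f$ on $\bar X\times Y$, while $\bar F>-\infty$ by compactness of $X$. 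Theorem~\ref{thm} then gives, after $T=\Theta(\bar\kappa/\epsilon^2)$ iterations (constant number of gradient evaluations each), a pair $(x_\epsilon,y_\epsilon)\in X\times Y$ with ${\rm dist}(0,\nabla_x\tilde f(x_\epsilon,y_\epsilon)+\partial\delta_X(x_\epsilon))\leq\epsilon$ and ${\rm dist}(0,-\nabla_y\tilde f(x_\epsilon,y_\epsilon)+\partial h(y_\epsilon))\leq\epsilon$.

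Second, I must show that $\|x_t\|<C$ and ${\rm dist}(x_\epsilon,\cM)\leq 1/2$, both of which will follow from a uniform bound $\|c(x_t)\|^2\leq K/\rho$ with $K>0$ independent of $\rho$. Such a bound should be extractable directly from the Lyapunov/potential descent used to prove Theorem~\ref{thm}: that descent yields $\tilde f_r(x_t,y_t) + \text{(nonnegative terms)}\leq \tilde f_r(x_0,y_0)+\text{const}$, and since $f-h$ is bounded on $X\times Y$ independently of $\rho$ (by Assumptions~\ref{assum:Y}--\ref{assum:lip-f}), rearranging isolates $\frac{\rho}{4}\|c(x_t)\|^2=\cO(1)$. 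Combined with the standard metric-regularity estimate for the Stiefel defining map, $\|c(x)\|\geq c_0\,{\rm dist}(x,\cM)$ near $\cM$, this gives ${\rm dist}(x_t,\cM)=\cO(1/\sqrt{\rho})$. Choosing $\bar\rho$ large enough so this distance is below $1/2$, and additionally $\bar\rho\geq 36\max_{y\in Y}L_x(y)$ (finite by compactness of $Y$ and continuity of $\nabla_x f$), the triangle inequality gives $\|x_t\|\leq \sup_{x\in\cM}\|x\|+1/2<C$ for every $t$.

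Third, with the norm-ball inactive at $x_\epsilon$, the normal cone term vanishes, so $\|\nabla_x\tilde f(x_\epsilon,y_\epsilon)\|\leq\epsilon$; Lemma~\ref{lem:equiv-stationary} then directly delivers the three $\cO(\epsilon)$ bounds stated in the theorem. For the asymptotic statement, by compactness of $X\times Y$ any limit point $(x^*,y^*)$ is attained along some subsequence $(x_{t_k},y_{t_k})\to(x^*,y^*)$, along which (after a sub-subsequence extraction guaranteed by the Cesàro-type bound in \eqref{eq:rate-result-PL}) $\|G_{t_k}^x\|,\|G_{t_k}^y\|\to 0$. Passing to the limit via continuity of $\nabla\tilde f$ and outer semicontinuity of $\partial h$, and using that $\|x_{t_k}\|<C$ so the normal cone is trivial, yields $\nabla_x\tilde f(x^*,y^*)=0$ and $0\in-\nabla_y\tilde f(x^*,y^*)+\partial h(y^*)$. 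The uniform $\|c(x_t)\|^2=\cO(1/\rho)$ bound forces $x^*\in\cM$, at which the exact-penalty equivalence \eqref{eq:stationary-equiv} converts these into $\grad_x f(x^*,y^*)=0$ and $0\in-\nabla_y f(x^*,y^*)+\partial h(y^*)$, i.e., stationarity in the sense of Definition~\ref{def:M-stationary}.

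The main obstacle will be rigorously establishing the $\rho$-uniform bound $\|c(x_t)\|^2=\cO(1/\rho)$ along the entire iterate sequence. This requires inspecting the potential function built in Theorem~\ref{thm}'s proof and verifying that the constants controlling its sublevel set do not blow up with $\rho$, so that the penalty term $\tfrac{\rho}{4}\|c(x_t)\|^2$ stays order one uniformly in $t$ and $\rho$. Without this control, one cannot rule out boundary iterates with $\|x_t\|=C$, nor the hypothesis ${\rm dist}(x_\epsilon,\cM)\leq 1/2$ required by Lemma~\ref{lem:equiv-stationary}. Once this uniform penalty bound is in hand, the remaining ingredients assemble routinely.
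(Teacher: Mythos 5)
Your proposal follows the paper's proof essentially step for step: the rate from Theorem~\ref{thm}, a $\rho$-independent bound on the potential that forces $\tfrac{\rho}{4}\|c(x_t)\|^2=\cO(1)$ uniformly in $t$ (the paper gets this from $\tilde f_r(x_t,y_t)\le\hat f_r(x_t,y_t;z_t)\le V_t\le V_0\le F(z_0)$ together with $F(A(x_t))\ge\bar F$, which is exactly the rearrangement you describe), the SVD inequality $\|c(x)\|\ge{\rm dist}(x,\cM)$ in place of your metric-regularity appeal, and then Lemma~\ref{lem:equiv-stationary}. The finite-time part of your argument is correct and the "main obstacle" you flag is resolved exactly as you anticipate.

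Two points in your asymptotic argument need repair. First, you invoke "a sub-subsequence extraction guaranteed by the Cesàro-type bound in \eqref{eq:rate-result-PL}": a Cesàro bound alone would only give $\liminf_t\|G_t\|=0$ and would \emph{not} guarantee that $G_{t_k}\to 0$ along the particular subsequence converging to $(x^*,y^*)$. What saves the argument is that \eqref{eq:rate-result-PL} actually asserts summability of $\|G_t^x\|^2+\bar\kappa\|G_t^y\|^2$, hence $G_t^x,G_t^y\to 0$ along the \emph{entire} sequence; you should use that directly. Second, the claim that "the uniform $\|c(x_t)\|^2=\cO(1/\rho)$ bound forces $x^*\in\cM$" is false as stated: a bound $\|c(x_t)\|\le 1/2$ uniform in $t$ (for fixed $\rho$) only yields $\|c(x^*)\|\le 1/2$, not $c(x^*)=0$. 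Feasibility of the limit point instead comes from the vanishing gradient: since $\|\nabla_x\tilde f(x_t,y_t)\|\to 0$ and ${\rm dist}(x_t,\cM)\le 1/2$, Lemma~\ref{lem:equiv-stationary} gives $\|x_t-\Pcal_\Mcal(x_t)\|\le\tfrac{3}{\rho}\|\nabla_x\tilde f(x_t,y_t)\|\to 0$, whence $x^*=\Pcal_\Mcal(x^*)\in\cM$; the same lemma also delivers $\grad_x f(x^*,y^*)=0$ in the limit, so the appeal to \eqref{eq:stationary-equiv} is not needed. With these two corrections your proof matches the paper's.
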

\begin{proof}
    \rv{See \cref{sec:PL-limit-proof}.}
\end{proof}
\begin{remark}
    \rv{
    For any given $z_0\in\cM$, let $x_0=x^*(z_0)\in X$ and $y_0\in Y^*(z_0)$. Then, for any $C>\frac{1}{2}+\sup_{x\in\cM}\norm{x}$, 
    choosing $\rho\geq 
    \rv{16\Big(F(z_0)-\rv{\bar F}+ (\bar l_y+l_h)D_Y\Big)}$, where $\bar l_y:=\max_{x \in X, y \in Y} \|\nabla_y f(A(x),y) \|<\infty$ and $D_Y:=\sup_{y_1,y_2\in Y}\norm{y_1-y_2}$, 
    implies that $\norm{c(x_t)}\leq\frac{1}{2}$, which ensures ${\rm dist}(x_t,\cM)\leq \frac{1}{2}$ for all $t\geq 0$.}
\end{remark}
{\begin{remark}
    The above result corresponds to the case where $X$ is defined using the Frobenius norm.  Instead, if $X$ is defined by the spectral norm ball, the same iteration complexity bound continues to hold for any $C > \frac{1}{2} + \sup_{x\in\cM}\|x\|_2 = \frac{3}{2}$. Since the expressions for the Lipschitz constants remain unchanged (see \cref{rmk:lip}), this smaller value of $C$ (in contrast to $\frac{1}{2} + \sqrt{r}$) yields an improved iteration complexity  \rv{$\mathcal{O}(\max\{L/\mu,1\}/\epsilon^2)$ as $L\leq l$, where $L$ is defined in Assumption~\ref{assum:lip-f}.}  However, it should be noted that in this setting the projection operator $\Pcal_X$ requires computing an SVD \rv{of an $n_1\times r$ matrix at each iteration, which can be substantially more expensive than the projection onto the Frobenius norm ball, especially when $r\approx n_1$, e.g., $n_1=r$.} 
\end{remark}}

\subsection{
\rv{Complexity for nonconvex-strongly concave problems}}
\rv{In this section, we replace Assumption~\ref{assum:sc} with Assumption~\ref{assum:sc2} which is clearly a stronger one; indeed, \citep[Theorem~3.1]{liao2024error} shows that Assumption~\ref{assum:sc2} implies Assumption~\ref{assum:sc} --see also Remark~\ref{rem:PL-relations}. That said, while adopting a stronger condition of $f_r$, we now relax the compactness requirement on $Y=\dom h$, which is necessary for our analysis in the $\mu$-PL case, i.e., we will study \eqref{prob} under Assumptions~\ref{assum:Y}.\textbf{(i)},~\ref{assum:lip-f} and \ref{assum:sc2}.}
\begin{defi}
\label{rem:Phi-connection}
Let $\Phi(x):=\max_{y\in {\cY}}\tilde f_r(x,y)$ and $\rv{\Phi^*}=\min_{x\in X}\Phi(x)$. Note that $\Phi(x)=F(A(x))+\frac{\rho}{4}\norm{c(x)}^2$; hence, $\rv{\Phi^*}\geq \rv{\bar F}$. 
Moreover, \sa{define $r^*(x) \triangleq \arg \max_{y\in\cY} \tilde{f}_r(x, y)=\arg \max_{y\in\cY} \rv{f_r(A(x), y)}$ for all $x\in X$, and let $\delta_t:=\norm{y_t-r^*(x_t)}^2$ for $t\geq 0$.}
\end{defi}

{First, we argue that when $f_r(x,\cdot)$ is strongly concave on $Y$ for all $x\in\bar X$, the \sm{} iterate sequence is bounded. Note that for every $z\in\cX$, due to strong concavity $Y^*(z)$ is a singleton, let $Y^*(z)=\{y^*(z)\}$.}
\begin{lemma}
\label{lem:bounded-sequence}
    \rv{Suppose 
    Assumptions~\ref{assum:Y}\textbf{.(i)},\ref{assum:lip-f} and \ref{assum:sc2} hold.}
    \sa{Given an arbitrary $C>0$ such that $X\supset \cM$, let $\{x_t,y_t,z_t\}\subset \rv{X\times \cY\times \cX}$ be generated by \sm{} \rv{using $\tau_1,\tau_2,p,\beta,\alpha$ as stated in~\cref{thm}.} 
    Then starting from any $(x_0,y_0,z_0)\in X\times \cY\times X$, $\{x_t,y_t,z_t\}_{t\geq 0}$ is a bounded sequence; indeed, $\max\{\norm{x_t}, \norm{z_t}\}\leq C$. \rv{If 
    one initializes \sm{} from $(x_0,y_0,z_0)$ as in Theorem~\ref{thm:mu-pl},} then $\norm{y_t-y_0}\leq \sqrt{\frac{2}{\mu}(\rv{F}(z_0)-\rv{\Phi^*})}+2\kappa_{yx}\norm{z_t-z_0}$ for all $t\geq 0$, where $\kappa_{yx}:=l_{yx}/\mu$.}   
\end{lemma}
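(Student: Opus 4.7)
The plan is to separate the easy boundedness of $\{x_t, z_t\}$ by $C$ from the quantitative bound on $\{y_t\}$, and to obtain the latter by combining the $\mu$-strong concavity of the dual with a Lyapunov descent argument adapted from the proof of \cref{thm}, together with the Lipschitzness of the best-response maps $r^*(\cdot)$ and $x^*(\cdot)$.

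For the primal and smoothing sequences the argument is direct: $\|x_{t+1}\| \leq C$ is immediate because $x_{t+1} = \mathcal{P}_X(\cdot)$ and $X$ is the Frobenius-norm ball of radius $C$. Under the parameter choice in \cref{thm}, one verifies that $\beta = \alpha \min\{\mu, l\}\tau_2 \in (0, 1)$, so $z_{t+1} = (1-\beta) z_t + \beta x_{t+1}$ is a convex combination of two elements of $X$; together with $z_0 \in X$, induction then yields $\|z_t\| \leq C$ for every $t \geq 0$.

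For the dual sequence, I would first invoke \cref{assum:sc2}: since $A(x) \in \bar X$ for all $x \in X$ (by \cref{def:bar_X}), $\tilde{f}_r(x, \cdot)$ is $\mu$-strongly concave on $\cY$ for every $x \in X$, giving the quadratic growth $\|y - r^*(x)\|^2 \leq \frac{2}{\mu}\bigl(\Phi(x) - \tilde{f}_r(x, y)\bigr)$. Next, I would extract from (a refinement of) the Lyapunov descent underlying \cref{thm} a potential $V_t$ that pointwise dominates $\Phi(x_t) - \tilde{f}_r(x_t, y_t) + \Phi^*$ and is nonincreasing along the iterates. Under either prescribed initialization with $z_0 \in \cM$, the reasoning in \cref{rem:init} gives $V_0 \leq F(z_0)$ (the penalty term vanishes since $c(z_0) = 0$ and $A(z_0) = z_0$), hence $\Phi(x_t) - \tilde{f}_r(x_t, y_t) \leq F(z_0) - \Phi^*$ and therefore $\|y_t - r^*(x_t)\| \leq \sqrt{\tfrac{2}{\mu}(F(z_0) - \Phi^*)}$. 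Decomposing $\|y_t - y_0\| \leq \|y_t - r^*(x_t)\| + \|r^*(x_t) - r^*(x^*(z_0))\|$ via $y_0 = r^*(x^*(z_0))$, applying the standard Lipschitzness of $r^*$ with constant $\kappa_{yx} = l_{yx}/\mu$, and using that $x^*(\cdot)$ is Lipschitz with constant at most $2$ (because the smoothing $\tfrac{p}{2}\|x-z\|^2$ with $p = 2l$ dominates the $l$-weak convexity of $\Phi$, so an implicit-function calculation gives $\|x^*(z_t) - x^*(z_0)\| \leq 2\|z_t - z_0\|$), the factor $2\kappa_{yx}\|z_t - z_0\|$ follows after absorbing $\|x_t - x^*(z_t)\|$ into the first summand via the same Lyapunov control.

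The main obstacle is the Lyapunov step: one must exhibit a potential $V_t$ that simultaneously dominates $\Phi(x_t) - \tilde{f}_r(x_t, y_t) + \Phi^*$ pointwise, is nonincreasing under the \sm{} updates, and satisfies $V_0 \leq F(z_0)$ under the prescribed initializations. The proof of \cref{thm} produces a Lyapunov function giving an averaged $\mathcal{O}(1/T)$ rate on $\|G_t^x\|^2 + \bar\kappa\|G_t^y\|^2$, and the strong concavity of \cref{assum:sc2} is precisely what upgrades this averaged rate into the uniform-in-$t$ control of the primal-dual gap demanded above, through the quadratic-growth consequence of strong concavity.
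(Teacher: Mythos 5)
Your treatment of $\{x_t\}$ and $\{z_t\}$ (projection onto the ball plus convex combination and induction) matches the paper and is fine, as is the final Lipschitz bookkeeping $\kappa_{yx}\cdot\frac{p}{p-l}=2\kappa_{yx}$. The gap is in how you control the dual iterates. You anchor $y_t$ to $r^*(x_t)$ and claim that the potential $V_t$ pointwise dominates the \emph{primal} gap $\Phi(x_t)-\tilde f_r(x_t,y_t)$ (up to $\Phi^*$). That domination is not established anywhere and does not follow from weak duality: writing $V_t=\hat f_r(x_t,y_t;z_t)-2\Psi_r(y_t;z_t)+2P(z_t)$, the only gap that is immediately sandwiched is the \emph{dual} gap, $0\le P(z_t)-\Psi_r(y_t;z_t)\le V_t-\Phi^*$; the primal gap additionally contains the primal suboptimality $\Phi(x_t;z_t)-P(z_t)$, which $V_t$ does not control pointwise (e.g., take $y_t$ dual-optimal for $z_t$ but $x_t$ far from $x^*(y_t,z_t)$). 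Moreover, even granting that step, your decomposition $\norm{y_t-y_0}\le\norm{y_t-r^*(x_t)}+\kappa_{yx}\norm{x_t-x^*(z_t)}+2\kappa_{yx}\norm{z_t-z_0}$ leaves the middle term $\kappa_{yx}\norm{x_t-x^*(z_t)}$ dangling; "absorbing" it into the first summand has no justification and would in any case change the stated constant.

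The paper sidesteps both issues by working entirely on the dual side: since $\tilde f_r(x,\cdot)$ is $\mu$-strongly concave for each $x\in X$ and $\Psi_r(\cdot;z)=\min_{x\in X}\hat f_r(x,\cdot;z)$ is an infimum of $\mu$-strongly concave functions, $\Psi_r(\cdot;z_t)$ is itself $\mu$-strongly concave, so the dual gap bound gives $\frac{\mu}{2}\norm{y_t-y^*(z_t)}^2\le V_0-\Phi^*\le F(z_0)-\Phi^*$ directly. The reference point is then $y^*(z_t)=r^*(x^*(z_t))$ (the maximizer of the smoothed dual, not the best response at $x_t$), and the triangle inequality against $y_0=y^*(z_0)$ produces exactly $\sqrt{\tfrac{2}{\mu}(F(z_0)-\Phi^*)}+2\kappa_{yx}\norm{z_t-z_0}$ with no leftover term. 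To repair your argument, replace the primal-gap/quadratic-growth step with this dual-gap/strong-concavity-of-$\Psi_r$ step.
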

\begin{proof}
    \rv{See~\cref{sec:proof-bounded-sequence}.}
\end{proof}
\sa{Next, we state a result that establishes $\{\norm{y_t-r^*(x_t)}^2\}\to 0$ as $t\to\infty$.}
\begin{lemma}
\label{lem:delta}
    \sa{For $t\geq 0$, $\delta_{t+1}\leq (1-\tau_2\mu/2)\delta_t+\frac{(1-\tau_2\mu)(2-\tau_2\mu)}{\tau_2\mu}\frac{l^2_{yx}}{\mu^2}\norm{x_{t+1}-x_t}^2$; moreover, \rv{$\sum_{t=0}^{\infty}\delta_t<\infty$, which implies that} $\delta_t\to 0$ as $t\to\infty$.}
\end{lemma}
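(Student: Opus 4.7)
The plan is to analyze the $y$-step as a proximal-gradient step on the $\mu$-strongly convex (composite) function $g_{t+1}(\cdot):=h(\cdot)-f(A(x_{t+1}),\cdot)$ and then translate the contraction from $y^*_{t+1}:=r^*(x_{t+1})$ into the desired recursion for $\delta_t=\|y_t-r^*(x_t)\|^2$ via a triangle/Young-inequality step combined with Lipschitz continuity of $r^*(\cdot)$.

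First, since $\zeta$-weak convexity of $h$ together with $\mu$-strong concavity of $f_r(x,\cdot)$ (\cref{assum:sc2}) makes $g_{t+1}$ a $\mu$-strongly convex function whose smooth part $-f(A(x_{t+1}),\cdot)$ has $l_{yy}$-Lipschitz gradient (\cref{lem:lip-f}), and since the parameter choices in \cref{thm} guarantee $\tau_2\le 1/l_{yy}$, the textbook one-step contraction of proximal gradient applied to the update $y_{t+1}={\rm prox}_{\tau_2 h}(y_t+\tau_2\nabla_y f(A(x_{t+1}),y_t))$ yields
\[
\|y_{t+1}-r^*(x_{t+1})\|^2\ \le\ (1-\tau_2\mu)\,\|y_t-r^*(x_{t+1})\|^2.
\]
This is the only place I need the strong-convexity--smoothness sandwich; the rest is mechanical.

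Next, I bound $\|y_t-r^*(x_{t+1})\|^2$ in terms of $\delta_t$ and the primal move $\|x_{t+1}-x_t\|$. By Young's inequality, for any $c>0$,
\[
\|y_t-r^*(x_{t+1})\|^2\le (1+c)\,\delta_t+(1+1/c)\,\|r^*(x_{t+1})-r^*(x_t)\|^2.
\]
Lipschitz continuity of $r^*$ with modulus $\kappa_{yx}=l_{yx}/\mu$ follows from the standard implicit-function/variational argument: $\mu$-strong concavity of $f_r(x,\cdot)$ and $l_{yx}$-Lipschitzness in $x$ of $\nabla_y f(A(\cdot),y)=\nabla_y\tilde f(\cdot,y)$ (again from \cref{lem:lip-f}) give $\|r^*(x_{t+1})-r^*(x_t)\|\le \kappa_{yx}\|x_{t+1}-x_t\|$. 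Choosing $c$ so that $(1-\tau_2\mu)(1+c)=1-\tau_2\mu/2$, i.e.\ $c=\tfrac{\tau_2\mu/2}{1-\tau_2\mu}$, gives $(1-\tau_2\mu)(1+1/c)=\tfrac{(1-\tau_2\mu)(2-\tau_2\mu)}{\tau_2\mu}$, and combining the two displays yields exactly
\[
\delta_{t+1}\le (1-\tau_2\mu/2)\,\delta_t+\tfrac{(1-\tau_2\mu)(2-\tau_2\mu)}{\tau_2\mu}\tfrac{l_{yx}^2}{\mu^2}\,\|x_{t+1}-x_t\|^2.
\]

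For the summability claim, I would iterate the recursion: letting $\rho:=\tau_2\mu/2\in(0,1)$ and $K:=\tfrac{(1-\tau_2\mu)(2-\tau_2\mu)}{\tau_2\mu}\tfrac{l_{yx}^2}{\mu^2}$, summation gives $\rho\sum_{t=0}^{T}\delta_t\le \delta_0+K\sum_{t=0}^{T-1}\|x_{t+1}-x_t\|^2$. It therefore suffices to show $\sum_t\|x_{t+1}-x_t\|^2<\infty$. This is where I expect the main obstacle: one cannot directly invoke \cref{thm} because its rate bound controls $\|G_{t+1}^x\|^2$ rather than $\|x_{t+1}-x_t\|^2$, but from the $x$-update and nonexpansiveness of $\pcal_X$ one has $\tfrac{1}{\tau_1}\|x_{t+1}-x_t\|\le\|g_t+p(x_t-z_t)\|$, which combined with the Lipschitz-gradient estimate $\|g_t\|\le\|g_t-g_*\|+\|g_*\|$ and the definition of $G_{t+1}^x$ in \eqref{eq:subgradient-G} lets one bound $\|x_{t+1}-x_t\|^2$ by $\|G_{t+1}^x\|^2$ plus cross terms that are themselves controlled by the descent of the Lyapunov potential used in the proof of \cref{thm}. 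Since the telescoped potential is lower bounded by $\bar F$ (using $\{x_t,z_t\}\subset X$ and \cref{lem:bounded-sequence}), summability of $\|x_{t+1}-x_t\|^2$ over all $t$ follows, which in turn yields $\sum_{t\ge 0}\delta_t<\infty$ and therefore $\delta_t\to 0$.
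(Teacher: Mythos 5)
Your overall architecture matches the paper's: establish the one-step recursion for $\delta_t$, then obtain summability of $\delta_t$ from summability of $\|x_{t+1}-x_t\|^2$ and the fact that $1-\tau_2\mu/2\in(0,1)$. The difference is that the paper simply cites \citep[Lemma E.4]{zhang2024agda+} for the recursion, whereas you re-derive it; your Young-inequality bookkeeping with $c=\tfrac{\tau_2\mu/2}{1-\tau_2\mu}$ and the Lipschitz bound $\|r^*(x_{t+1})-r^*(x_t)\|\le\kappa_{yx}\|x_{t+1}-x_t\|$ is exactly the right algebra and reproduces the stated constants. Two remarks. First, the one place your derivation is not airtight is the claimed ``textbook'' contraction $\|y_{t+1}-r^*(x_{t+1})\|^2\le(1-\tau_2\mu)\|y_t-r^*(x_{t+1})\|^2$: under Assumption~\ref{assum:sc2} the $\mu$-strong convexity lives in the \emph{composite} $h(\cdot)-f(A(x_{t+1}),\cdot)$, not necessarily in the smooth part, and $h$ itself may be only $\zeta$-weakly convex, so ${\rm prox}_{\tau_2 h}$ is merely $\tfrac{1}{1-\tau_2\zeta}$-Lipschitz rather than nonexpansive. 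The standard contraction argument (smooth part strongly convex, prox part convex) therefore does not apply verbatim, and you would need to either redistribute quadratics between the two parts or track the $\tfrac{1}{1-\tau_2\zeta}$ factor; this is precisely the technicality the cited lemma absorbs. Second, your concern that \cref{thm} only controls $\|G_{t+1}^x\|^2$ and not $\|x_{t+1}-x_t\|^2$ is unfounded: the potential-descent inequality \eqref{eq:v-descent} in the proof of \cref{thm} bounds $V_t-V_{t+1}$ below by $\tfrac{3}{10\tau_1}\|x_{t+1}-x_t\|^2$, and since $V_t$ is bounded below by $\Phi^*$, telescoping gives $\sum_{t\ge 0}\|x_{t+1}-x_t\|^2<\infty$ immediately — no detour through $G_{t+1}^x$ is needed, and this is exactly what the paper invokes.
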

\begin{proof}
    \rv{See~\cref{sec:proof-delta}.}
\end{proof}
\sa{Next, we argue that for both $C, \rho>0$ sufficiently large, $\norm{x_t}<C$ for all $t\geq 0$; thus, \cref{thm} implies that 
\rv{any limit point of $\{x_t,y_t\}$ is a stationary point for the minimax problem in \eqref{prob} in terms of Definition~\ref{def:M-stationary}.}}
\begin{thm}
    \label{coro} 
    \rv{Under Assumptions~\ref{assum:Y}\textbf{.(i)},\ref{assum:lip-f} and \ref{assum:sc2}, there exists some $\bar\rho>0$ such that for every $\rho>\bar\rho$ there is $\bar T_\rho\in\mathbb{Z}_+$, which is non-increasing in $\rho$, such that the results of \cref{thm:mu-pl} continue to hold for all $t\geq \bar T_\rho$.}
\end{thm}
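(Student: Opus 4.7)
The plan is to reduce the nonconvex–strongly concave setting of \cref{coro} to the nonconvex–PL framework already analyzed in \cref{thm,thm:mu-pl}. First, by \citep[Theorem~3.1]{liao2024error}, \cref{assum:sc2} implies \cref{assum:sc} with the same modulus $\mu$, so the per-iteration rate bound \eqref{eq:rate-result-PL} is immediately available. The only subtlety compared to \cref{thm:mu-pl} is that compactness of $Y=\dom h$ is no longer assumed. Its role there was twofold: (i) to make $\{y_t\}$ bounded and $r^*(x_t)$ well-defined, and (ii) to let constants such as $\bar l_y$ and $D_Y$ used in \cref{thm:mu-pl} be finite. Both are recovered \emph{along the iterate sequence itself} by \cref{lem:bounded-sequence,lem:delta}.

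Next, I would initialize \sm{} from $(x_0,y_0,z_0)$ exactly as in \cref{thm:mu-pl}: pick any $z_0\in\cM$, take $y_0=y^*(z_0)$ and $x_0\in\{z_0,x^*(z_0)\}$. By \cref{lem:bounded-sequence}, $\max\{\|x_t\|,\|z_t\|\}\leq C$ and $\|y_t-y_0\|\leq \sqrt{2(F(z_0)-\Phi^*)/\mu}+2\kappa_{yx}\|z_t-z_0\|$ hold for every $t\geq 0$, so $\{y_t\}$ lies in a compact set $\bar Y\subset \cY$. Because \cref{assum:sc2} also makes $x\mapsto r^*(x)$ be $\kappa_{yx}$-Lipschitz, $\{r^*(x_t)\}\subset\bar Y$ after possibly enlarging $\bar Y$. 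Thus all quantities used in the proof of \cref{thm} are finite along the iterates when $Y$ is replaced by $\bar Y$, and the estimate \eqref{eq:rate-result-PL} delivers $\|G_t^x\|^2+\bar\kappa\|G_t^y\|^2\to 0$ in the Ces\`aro sense. Combined with \cref{lem:delta}, which shows $\{\delta_t\}$ is summable and $\delta_t\to 0$, this yields that every limit point of $\{(x_t,y_t)\}$ is a stationary point of the penalized problem in the sense of \eqref{eq:stationary-org}.

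It then remains to show that for $\rho$ sufficiently large the bound $\|x\|\leq C$ becomes inactive from some step $\bar T_\rho$ onward, so that the stationarity of \eqref{prob:pen} converts into that of \eqref{prob} via \eqref{eq:stationary-equiv} and \cref{lem:equiv-stationary}. The exact-penalty computation behind \cref{lem:equiv-stationary} shows that, whenever $\|G_t^x\|$ and ${\rm dist}(0,-\nabla_y\tilde f(x_t,y_t)+\partial h(y_t))$ are of order $\epsilon$, one has ${\rm dist}(x_t,\cM)=\cO(\epsilon/\rho)$. Pick $\bar\rho$ large enough so that the right-hand side of \eqref{eq:rate-result-PL} (whose prefactor $P(z_0)-\bar F+\Delta_0\leq F(z_0)-\bar F$ is $\rho$-independent thanks to the initialization in \cref{rem:init}) combined with this exactness bound force ${\rm dist}(x_t,\cM)\leq \tfrac12$ for all $t\geq \bar T_\rho$. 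Then $\|x_t\|\leq \sup_{x\in\cM}\|x\|+\tfrac12<C$, so $\partial\delta_X(x_t)=\{0\}$ and $G_t^x$ reduces to $\nabla_x\tilde f(x_t,y_t)$; \cref{lem:equiv-stationary} now yields the $\cO(\epsilon)$-stationarity claim of \cref{thm:mu-pl}, while the asymptotic claim follows because $\delta_t\to 0$ and $\|G_t^x\|\to 0$ along any convergent subsequence force $\grad_x f(x^*,y^*)=0$ and $0\in-\nabla_y f(x^*,y^*)+\partial h(y^*)$ at every limit $(x^*,y^*)$, which then necessarily lies in $\cM\times\bar Y$.

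The main obstacle I expect is justifying that $\bar T_\rho$ can be chosen non-increasing in $\rho$. For this one must track how the Lipschitz constant $l$, which scales linearly in $\rho$ by \cref{lem:lip-f}, enters the denominator through $\tau_1=\cO(1/l)$ in \eqref{eq:rate-result-PL}. Because the numerator $P(z_0)-\bar F+\Delta_0$ is $\rho$-independent under the chosen initialization, the time needed for $\|G_t^x\|$ to drop below any target $\epsilon$ grows at most polynomially in $\rho$; however, the exactness conversion factor $1/\rho$ improves linearly in $\rho$, and a direct comparison shows that the first hitting time for ${\rm dist}(x_t,\cM)\leq\tfrac12$ is non-increasing in $\rho$ once $\rho\geq\bar\rho$. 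This monotonicity argument is the only truly delicate part; everything else is a bookkeeping exercise that mirrors the proof of \cref{thm:mu-pl} with $Y$ replaced by the data-dependent compact set $\bar Y$ provided by \cref{lem:bounded-sequence}.
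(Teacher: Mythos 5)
Your overall scaffolding (reduce to the PL framework, use \cref{lem:bounded-sequence} to recover a compact set $\bar Y\supset\{y_t\}$, show the ball constraint becomes inactive, then invoke \cref{lem:equiv-stationary}) matches the paper, but the step where you establish ${\rm dist}(x_t,\cM)\le\tfrac12$ for all $t\ge\bar T_\rho$ is circular and would not survive scrutiny. You propose to deduce ${\rm dist}(x_t,\cM)=\cO(\epsilon/\rho)$ from the smallness of $\|G_t^x\|$ via "the exact-penalty computation behind \cref{lem:equiv-stationary}," but that computation (see \eqref{eq:grad_x-lower-bound}--\eqref{eq:distance-control}) takes ${\rm dist}(x_\epsilon,\cM)\le\tfrac12$ as a \emph{hypothesis}: the lower bound $\|\nabla_x\tilde f\|^2\ge\frac{\rho^2}{2}\|x_\epsilon(x_\epsilon^\top x_\epsilon-I_r)\|^2$ uses the singular values of $x_\epsilon$ lying in $[\tfrac12,\tfrac32]$. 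So you cannot use it to \emph{prove} proximity to $\cM$. Moreover, \eqref{eq:rate-result-PL} is a Ces\`aro/best-iterate bound; even if the conversion were legitimate it would only control $\min_{1\le t\le T}\|G_t^x\|$, not every iterate beyond some index, whereas the theorem asserts the conclusion for \emph{all} $t\ge\bar T_\rho$. Your monotonicity argument for $\bar T_\rho$ inherits the same problem: the iteration count to reach gradient norm $\epsilon$ scales with $\bar\kappa=\cO(\rho)$ (since $l=\cO(\rho)$ by \cref{lem:lip-f}) while the tolerated $\epsilon$ only relaxes linearly in $\rho$, so a "direct comparison" does not obviously give a non-increasing hitting time.

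The paper's mechanism is a function-value argument, not a gradient argument, and this is exactly where \cref{lem:delta} is essential (you relegate it to the asymptotic part only). From the monotone potential one gets $\hat f_r(x_t,y_t;z_t)\le V_t\le V_0$ for every $t$; combining with $\Phi(x_t)\le\hat f_r(x_t,y_t;z_t)-\frac{p}{2}\|x_t-z_t\|^2+(\bar l_y+l_h)\|y_t-r^*(x_t)\|$ and $\Phi(x_t)=F(A(x_t))+\frac{\rho}{4}\|c(x_t)\|^2\ge\bar F+\frac{\rho}{4}\|c(x_t)\|^2$ yields the \emph{per-iteration} bound $\|c(x_t)\|^2\le\frac{4}{\rho}\bigl(V_0-\bar F+(\bar l_y+l_h)\sqrt{\delta_t}\bigr)$. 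Since $Y$ is no longer compact, the term $(\bar l_y+l_h)\sqrt{\delta_t}$ cannot be absorbed into a diameter $D_Y$ as in \cref{thm:mu-pl}; instead one uses $\delta_t\to0$ (\cref{lem:delta}) to define $\bar T_\rho$ as the first index after which $\delta_t\le\frac{\rho^2}{1024(\bar l_y+l_h)^2}$, giving $\|c(x_t)\|\le\tfrac12$, hence ${\rm dist}(x_t,\cM)\le\tfrac12$, for all $t\ge\bar T_\rho$; the non-increase of $\bar T_\rho$ in $\rho$ then follows because this threshold on $\delta_t$ grows like $\rho^2$. Only after this feasibility control is in place does one apply \cref{thm} and \cref{lem:equiv-stationary} to convert small $\|\nabla_x\tilde f(x_t,y_t)\|$ into $\cO(\epsilon)$-stationarity for \eqref{prob}. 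You need to replace your gradient-based feasibility argument with this function-value one.
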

\begin{proof}
    See \cref{sec:proof-convergence}.
\end{proof}

\subsection{
\rv{Complexity for nonconvex-merely concave problems}}
Beyond the $\mu$-PL and strong concavity settings for $f_r(x,\cdot)$, we can also establish an \rv{$\tilde\cO(\epsilon^{-4})$ complexity of our \sm{} algorithm to compute an $\epsilon$-stationary point of the original problem under mere concavity of $f_r(x,\cdot)$ for any $x\in \bar X$. This result follows from Theorem~\ref{coro} and the potential function construction provided in \citep[Theorem 3.2]{xu2023unified}.}

\begin{thm} \label{thm:concave}
    \rv{Suppose Assumptions~\ref{assum:Y},\ref{assum:lip-f} and \ref{assum:mc} hold.}
    For any $C>\frac{1}{2}+\sup_{x\in\cM}\norm{x}$, there exists $\bar{\rho}>0$ such that within $\rv{\tilde\cO}(l^4\epsilon^{-4})$ gradient evaluations, initialized from an arbitrary $(x_0,y_0,x_0)\in \Mcal \times Y\times \Mcal$ and using parameters 
    {$\rho > \bar{\rho}, \tau_2 \leq \frac{1}{10 \rv{l_{yy}}}, \sa{\tau_{1,t}} = \frac{2}{2 \tau_2 l^2(1 + b \rv{\sqrt{t+1}} ) - l}, b > \max\left\{ \frac{2}{\tau_2 l}-1, \frac{32\cdot 20^2}{19^2} \right\}, \theta_t = \frac{19}{20}\cdot\frac{1}{\tau_2}\cdot
    \rv{\frac{1}{(t+1)^{1/4}}}, p = 0$, 
    and $\beta=1$,} 
    \emph{\sm{}} can generate $(x_\epsilon,y_\epsilon)\in X\times \cY$ such that $\| x_\epsilon - \Pcal_{\Mcal}(x_\epsilon)\| \leq \frac{3}{\rho} \epsilon$, $\|\sa{\grad_x} f(\Pcal_{\Mcal}(x_\epsilon), y_\epsilon)\| = \mathcal{O}(\epsilon)$, and $\rv{{\rm dist}\Big(0, -\nabla_y f(\Pcal_{\Mcal}(x_\epsilon), y_\epsilon) + \partial h(y_{\epsilon})\Big)}= \mathcal{O}(\epsilon)$. Moreover, for any limit point $(x^*,y^*)$, 
    it holds that $x^*\in\cM$, $\grad_x f(x^*, y^*) =0$ and \rv{$0\in-\nabla_y f(x^*, y^*)+\partial h(y^*)$}.
    %
\end{thm}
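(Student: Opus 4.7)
The plan is to combine the sm-AGDA with Tikhonov dual regularization machinery of \citep{xu2023unified} with the penalty-to-manifold conversion developed in this paper (Lemma~\ref{lem:equiv-stationary} and the argument behind Theorems~\ref{thm:mu-pl} and \ref{coro}). At a high level, the regularization $-\tfrac{\theta_t}{2}\|y\|^2$ with $\theta_t\sim (t+1)^{-1/4}$ makes the inner problem $\tilde f(x,\cdot)-\tfrac{\theta_t}{2}\|\cdot\|^2$ a $\theta_t$-strongly concave in $y$ problem to which a strongly-concave-type rate analysis applies, at the cost of $\theta_t$-dependent bias that must be shown to vanish. The coupling $\tau_{1,t}\sim 1/(\theta_t^{-1})\sim 1/\sqrt{t+1}$ and $p=0$, $\beta=1$ (so $z_{t+1}=x_{t+1}$) are exactly the settings of \citep[Theorem~3.2]{xu2023unified}.

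First, I would argue that Algorithm~\ref{alg:sm-agda-stiefel} with $p=0$ and $\beta=1$ run on \eqref{prob:pen} is precisely the smoothed proximal-AGDA scheme of \citep{xu2023unified} applied to the smooth-in-$x$, weakly-convex-regularized-in-$y$ problem $\min_{x\in X}\max_{y\in\cY}\tilde f(x,y)-h(y)$. By Lemma~\ref{lem:lip-f}, $\nabla\tilde f$ is $l$-Lipschitz on $X\times Y$, and under Assumption~\ref{assum:mc}, $\tilde f(x,\cdot)-h(\cdot)$ is concave for each $x\in X$. Hence the hypotheses of \citep[Theorem~3.2]{xu2023unified} are met and, after running $T=\tilde\cO(l^4/\epsilon^4)$ iterations with the stated parameter schedule, one extracts an iterate $(x_\epsilon,y_\epsilon)\in X\times Y$ satisfying
\begin{equation*}
{\rm dist}\!\left(0,\nabla_x\tilde f(x_\epsilon,y_\epsilon)+\partial\delta_X(x_\epsilon)\right)\le \epsilon,\quad {\rm dist}\!\left(0,-\nabla_y\tilde f(x_\epsilon,y_\epsilon)+\partial h(y_\epsilon)\right)\le \epsilon.
\end{equation*}
The $\theta_t$-dependent bias terms in the potential function from \citep{xu2023unified} are telescoping and absorbed into the $\tilde\cO$ polylog factor; the key is that $\theta_t\to 0$ slowly enough that the weighted running sum of squared residuals is bounded by $\cO(\tilde f(x_0,y_0)-\bar F+ \mathrm{diam}(Y)^2)$, yielding the $\epsilon^{-4}$ rate once the weights are unwound.

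Second, to transfer stationarity from \eqref{prob:pen} back to the manifold minimax problem \eqref{prob}, I would invoke Lemma~\ref{lem:equiv-stationary}: provided $\rho\ge 36\,L_x(y_\epsilon)$ and ${\rm dist}(x_\epsilon,\cM)\le \tfrac{1}{2}$, the $\epsilon$-stationarity above implies $\|x_\epsilon-\pcal_\cM(x_\epsilon)\|\le 3\epsilon/\rho$, $\|\grad_x f(\pcal_\cM(x_\epsilon),y_\epsilon)\|=\cO(\epsilon)$, and ${\rm dist}(0,-\nabla_y f(\pcal_\cM(x_\epsilon),y_\epsilon)+\partial h(y_\epsilon))=\cO(\epsilon)$. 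To validate the two prerequisites, I would mimic the $\bar\rho$-choice argument used in Theorem~\ref{thm:mu-pl}: picking $\bar\rho$ large enough so that along the generated trajectory the penalty level is bounded by a primal gap $F(z_0)-\bar F$ plus the $l_h D_Y$ type correction produced by the schedule, thereby forcing $\tfrac{\rho}{4}\|c(x_t)\|^2$ to stay small and hence $\|c(x_t)\|\le \tfrac{1}{2}$ for all $t$. Since $C>\tfrac{1}{2}+\sup_{x\in\cM}\|x\|$, this also yields $\|x_t\|<C$, so the constraint $x\in X$ is inactive and $\partial\delta_X(x_t)=\{0\}$, removing $X$ from the picture entirely.

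Third, for the asymptotic claim that any limit point $(x^*,y^*)$ of $\{(x_t,y_t)\}$ satisfies $x^*\in\cM$, $\grad_x f(x^*,y^*)=0$, and $0\in -\nabla_y f(x^*,y^*)+\partial h(y^*)$, I would pass to the limit in the residual bounds produced by the sm-AGDA descent lemma. Because $\theta_t\to 0$, $\tau_{1,t}\to 0$ in a summable-weighted sense, and the potential function is non-increasing up to a summable error, every subsequential limit of the residuals $G_t^x$, $G_t^y$ (analogues of \eqref{eq:subgradient-G} adapted to the $\theta_t$-perturbed update) must vanish. Together with the exact-penalty equivalence \eqref{eq:stationary-equiv} and the inactivity of $X$, Fréchet-subdifferential outer semicontinuity then yields the three conditions of Definition~\ref{def:M-stationary}. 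The main obstacle will be the careful bookkeeping of the $\theta_t$-perturbed potential: keeping the coupling between the decreasing $\tau_{1,t}$, the weak-convexity constant $\zeta$ of $h$, and the penalty-induced Lipschitz constant $l=\cO(C^2\rho+C^4 L)$ consistent so that the same $\bar\rho$ that guarantees feasibility also validates the $\tau_{1,t}$ range used in \citep[Theorem~3.2]{xu2023unified}.
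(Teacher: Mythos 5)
Your overall route --- apply the Tikhonov-regularized smoothed prox-AGDA potential-function analysis of \citep{xu2023unified} to the penalized problem over $X\times Y$, show the ball constraint never activates by bounding the penalty level along the trajectory by a $\rho$-independent quantity, and then invoke Lemma~\ref{lem:equiv-stationary} --- is exactly the paper's proof. Your sketch of the feasibility step captures the right idea; the paper's only extra ingredient there is an explicit bound on $V_1-V_0$ (absent from \citep{xu2023unified}) which is needed precisely because only $V_0=f_r(x_0,y_0)$ with $x_0\in\cM$ is $\rho$-independent, and your "primal gap plus $l_h D_Y$-type correction" phrasing is consistent with that.

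The one genuine gap is in your asymptotic argument. The merely-concave analysis only controls the weighted sum $\sum_{t}\alpha_t^{-1}\bigl(\|\cG_{t+1}^x\|^2+\|\cG_{t+1}^y\|^2\bigr)=\cO(1+\log T)$ with $\alpha_t=\Theta(\sqrt{t})$ and $\sum_t\alpha_t^{-1}=\Omega(\sqrt{T})$; this yields only $\liminf_t D_t=0$, not $D_t\to 0$. Hence your claim that "every subsequential limit of the residuals must vanish" is not justified for arbitrary limit points of the full iterate sequence $\{(x_t,y_t)\}$. The paper sidesteps this by asserting asymptotic stationarity only for limit points of the best-iterate output subsequence $(\bar x_t,\bar y_t)=(x_{T(t)},y_{T(t)})$ with $T(t)=\argmin_{k\le t-1}\|\cG_{k+1}^x\|^2+\|\cG_{k+1}^y\|^2$, along which the residuals vanish by construction. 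To complete your version you should either restrict to such a subsequence or establish a last-iterate residual decay, which the present potential-function analysis does not provide.
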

\begin{proof}
    \rv{See~\cref{sec:proof-concave} in the appendix.}
\end{proof}


\begin{remark}
    The above theorem \sa{establishes} the first complexity result \sa{for a retraction-free method to solve} 
    nonconvex-merely-concave \rv{(NCMC)} minimax problems over compact submanifolds. 
\end{remark}
\begin{remark}
\label{rem:regularization}
    In addition to the $\mathcal{O}(l^4 \epsilon^{-4})$ complexity result established in \cref{thm:concave}, we can also obtain an improved $\mathcal{O}(\epsilon^{-3})$ bound by applying \sm{} to the regularized problem:
    \[
        \min_{x \in \Mcal} \max_{y \in \cY} \; \rv{f_r^\epsilon(x,y):=} f(x,y) - h(y) - \frac{\epsilon}{4\rv{D_Y}} \|y - y_0\|^2,
    \]
    where $\rv{D_Y} := \max_{y_1,y_2 \in Y} \|y_1 - y_2\|$ and $y_0 \in Y$. 
    \rv{$f_r^\epsilon(x,\cdot)$ is strongly concave with modulus $\mu = \tfrac{\epsilon}{2D_Y}$ for any fixed $x$.} Therefore, by \cref{thm:mu-pl}, \sm{} yields a pair $(x_\epsilon, y_\epsilon)$ such that
    $\|x_\epsilon - \Pcal_{\Mcal}(x_\epsilon)\| \leq \tfrac{3}{\rho} \epsilon, \; 
        \|\grad_x f(\Pcal_{\Mcal}(x_\epsilon), y_\epsilon)\| = \mathcal{O}(\epsilon), \;
        \mathrm{dist}\!\left(0, -\nabla_y f(\Pcal_{\Mcal}(x_\epsilon), y_\epsilon) + \tfrac{\epsilon}{2\rv{D_Y}} (y_\epsilon - y_0) + \partial h(y_\epsilon)\right) = \mathcal{O}(\epsilon).$
    Since $\rv{\bar\kappa=}\kappa = l/\mu = \mathcal{O}(l \epsilon^{-1})$ and $\rv{D_Y} \geq \|y_\epsilon - y_0\|$, it follows that \rv{$\mathrm{dist}\!\left(0, -\nabla_y f(\Pcal_{\Mcal}(x_\epsilon), y_\epsilon) + \partial h(y_\epsilon)\right) = \mathcal{O}(\epsilon)$; therefore,} $(x_\epsilon, y_\epsilon)$ is an $\epsilon$-stationary point of the original \rv{NCMC minimax problem $\min_{x\in\cM}\max_{y\in\cY}f_r(x,y)$}, and it can be computed within $\mathcal{O}(\rv{l^2}\epsilon^{-3})$ iterations.
\end{remark}
\subsection{Extension to \rv{other} compact submanifolds}
\label{sec:extensions}
In previous sections, for the theoretical analysis of \sm{}, we focus on the case where $\mathcal{M}$ is the Stiefel manifold. However, the concept of exact penalty functions introduced in \citep{xiao2018regularized} applies more broadly to all compact submanifolds satisfying some mild regularity conditions. In the following, we discuss how our algorithm and 
its guarantees can be extended to some other important compact submanifolds.
 \paragraph{Oblique manifold:} $\cM=\{x \in \R^{d\times r}: \diag(x^\top x) = I_r\}$. In this case, we define 
\[
A(x) = 2x \left(I_r + \diag(x^\top x)\right)^{-1}, \quad c(x) = \diag(x^\top x) - I_r.
\]  
To ensure that the same convergence guarantees hold, it suffices to (i) validate Lemma~\ref{lem:equiv-stationary}, and (ii) ensure that small constraint violation \(c(x)\) still implies that \(x\) is close to the manifold.  

For (i), we can 
use \citep[Theorem 1]{xiao2024dissolving} to control the feasibility error and Riemannian gradient norm by gradient norm of the penalized objective function 
when \(x\) is sufficiently close to the manifold and \(\rho\) is chosen large enough.

For (ii), let \(\bar{x}\) denote the projection of \(x\) onto \(\Mcal\), given by  
$ \bar{x} = x \, \diag(x^\top x)^{-\frac{1}{2}}.$
Then, $
\|x - \bar{x}\| = \| x \left(I_r - \diag(x^\top x)^{-\frac{1}{2}} \right) \|.$
If \(\|c(x)\| \leq \epsilon\) with \(\epsilon \leq \frac{1}{2}\), then we have  
$1 - \epsilon \leq \max_{1 \leq i \leq r} \|x_i\| \leq 1 + \epsilon$,  and  
\[
\|x - \bar{x}\| \leq \|x\|_2 \left\|I_r - \diag(x^\top x)^{-\frac{1}{2}} \right\| \leq \frac{3}{2} \left\| I_r - \diag(x^\top x)^{-\frac{1}{2}} \right\| \leq \frac{3}{2} \sqrt{r \epsilon}.
\]  

\paragraph{Generalized Stiefel Manifold:} \(\cM=\{x \in \R^{d \times r} : x^\top B x = I_r\}\) where \(B \in \R^{d \times d}\) is a positive definite matrix.   In this case, we define  
\[
A(x) = x \left(\tfrac{3}{2} I_r - \tfrac{1}{2} x^\top B x\right), \quad c(x) = x^\top B x - I_r.
\]  
Following \citep[Theorem 1]{xiao2024dissolving}, we can establish results analogous to Lemma \ref{lem:equiv-stationary}, ensuring approximate stationarity under the exact penalty framework.  

To show that small \(c(x)\) implies small ${\rm dist}(x,\Mcal)$, consider the compact SVD of \(B^{\frac{1}{2}} x\):  
$B^{\frac{1}{2}} x = u s v^\top.$  Define  
$\bar{x} = B^{-\frac{1}{2}} u v^\top.$  
Then,  
$\|x - \bar{x}\| = \left\| B^{-\frac{1}{2}} \left( u s v^\top - u v^\top \right) \right\| \leq \| B^{-\frac{1}{2}} \| \| s - I_r \|.$ 
Noting that  
$c(x) = x^\top B x - I_r = v s^\top s v^\top - I_r = v (s^2 - I_r) v^\top$,   
we obtain \(\| s^2 - I_r \| = \| c(x) \|\), which implies \(\| s - I_r \| \leq \| c(x) \|\).  
Thus, we establish the desired  result by having $
\| x - \bar{x} \| \leq \| B^{-\frac{1}{2}} \| \, \| c(x) \|$.

\section{Numerical experiments}
\vspace{-0.3cm}
 In this section, we conduct experiments on three tasks, i.e., the quadratic nonconvex-merely concave \rv{(NCMC)} problem, the quadratic nonconvex-strongly concave (NCSC) problem, and robust DNNs training over a Riemannian manifold.
In experiments, we compare our \sm{} with MGDA, (i.e., $p = 0$ and $\beta = 0$ in Algorithm \ref{alg:sm-agda-stiefel}), RGDA \citep{huang2023gradient,lin2025two}, and its stochastic version RSGDA \citep{huang2023gradient} as the comparison baselines. We use constant primal step sizes for \sm{}, i.e., $\tau_{1,t} = \tau_1$ for all $t=0,1,\dots$. For all figures in this section, we run each algorithm several times independently, and plot the geometric mean and standard deviation in solid lines and shaded regions, respectively. 





    
\vspace{-0.3cm}
\subsection{Quadratic \rv{NCMC} problem ($\mu=0$)} \label{ncc}
\subsubsection{Case 1: Vector variables} \label{sec-Vector1}
In the first scenario, we consider the following quadratic \rv{NCMC} minimax problem:
\begin{equation} \label{toy-example}
\min_{x \in \mathcal{M} }\max_{y \in Y} \mathcal{L}(x, y) = \frac{1}{2} x^\top Q x + x^\top A y ,
\end{equation}
where $Y =\{y \in \mathbb{R}^n:\ \|y\| \le 1 \},~\mathcal{M} = \{x \in \mathbb{R}^n:\ x^{\top}x = 1 \}$ denotes the sphere manifold, and $A,Q \in \mathbb{R}^{n \times n}$ are symmetric random matrices. We set $ n = 500$, and randomly generate $A$ and $Q$ such that $A = V \Lambda_A V^\top$ and $Q = V \Lambda_Q V^\top$, where $\Lambda_A,\Lambda_Q$ are diagonal matrices, and $V \in \mathbb{R}^{500 \times 500}$ is an orthogonal matrix, i.e., $VV^\top=V^\top V=I$. We set
$
\Lambda_Q = \frac{L \cdot \Lambda_Q^0}{\|\Lambda_Q^0\|_2} 
$
for $L \in \{5, 10\}$, where $\Lambda_Q^0$ is a random diagonal matrix with diagonal elements being sampled uniformly at random from the interval $[-1, 1]$ and $\|\Lambda_Q^0\|_2$ denotes the spectral norm of $\Lambda_Q^0$; the diagonal $\Lambda_A$ is generated the same way, except we ensure that $\norm{\Lambda_A}_2=1$. Thus, $\nabla\mathcal{L}$ is Lipschitz with constant $L=\max\{\norm{Q}_2,\norm{A}_2\}$.  
The primal loss function is
$
\rv{F}(x) = \rv{\frac{1}{2} x^\top Q x + \| A x\|}$. 
Since $x$ lies on the sphere, the minimum value of $\rv{F}(x)$ is $
\rv{F^*}= \lambda_{\min} \left(\rv{\frac{1}{2}} Q + 
A\right)$.
We randomly regenerated $10$ instances as described above and the results are shown in Figure \ref{fig:toy-example} for the algorithms running on these instances. The relative gradient norm denotes $\min_{\rv{1} \leq t \leq T} \Big\{ \|G_t^x \|^2 + \|G_t^y\|^2 \Big\} / ( \|G_1^x \|^2 + \|G_1^y\|^2 ) $, where $G_t^x$ and $G_t^y$ are defined by \eqref{eq:subgradient-G}. In the experiments, we set $\beta =0.9$, $p = 2L$, $\tau_1 = \frac{1}{6L}$, $ \tau_2 = \frac{\tau_1}{3}$, $\rho = 10$, and $C = 1000$ for \sm{}, and we use the same $\tau_1$ and $\tau_2$ for MGDA, i.e., $\tau_1 = \frac{1}{6L}$ and $ \tau_2 = \frac{\tau_1}{3}$. For RGDA, we set $\tau_1 = \frac{1}{16(\mu_y+1)^2L}$ and $\tau_2 = \frac{1}{L}$ according to \citep[Theorem 17]{lin2025two}. We observe that \sm{} 
is competitive against MGDA and RGDA. 
Furthermore, the manifold constraint violations show that the iterates $x_t$ gradually land on the manifold and the norm constraint, $\norm{x}\leq C$, is always inactive.

\begin{figure}[htbp]
\centering
\begin{minipage}{0.48\textwidth}
    \centering
    \includegraphics[width=\linewidth]{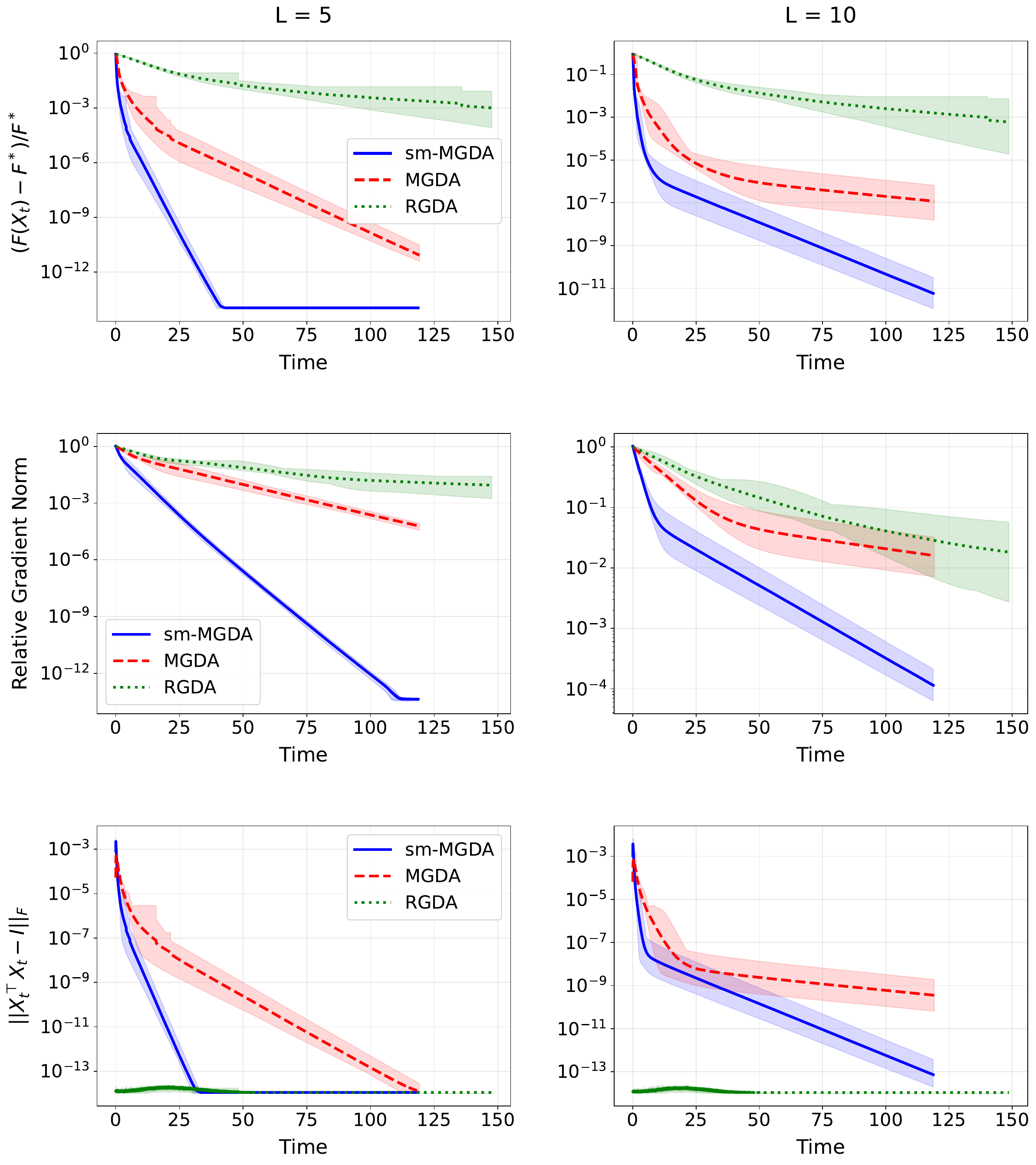}
    \caption{Comparison of \sm{} with MGDA and RGDA on \eqref{toy-example} for $\mu = 0$ with vector variables $x\in\reals^{n}$ with $n=500$ for $L\in\{5,10\}$. 
    }
    \label{fig:toy-example}
\end{minipage}
\hfill
\begin{minipage}{0.48\textwidth}
    \centering
    \includegraphics[width=\linewidth]{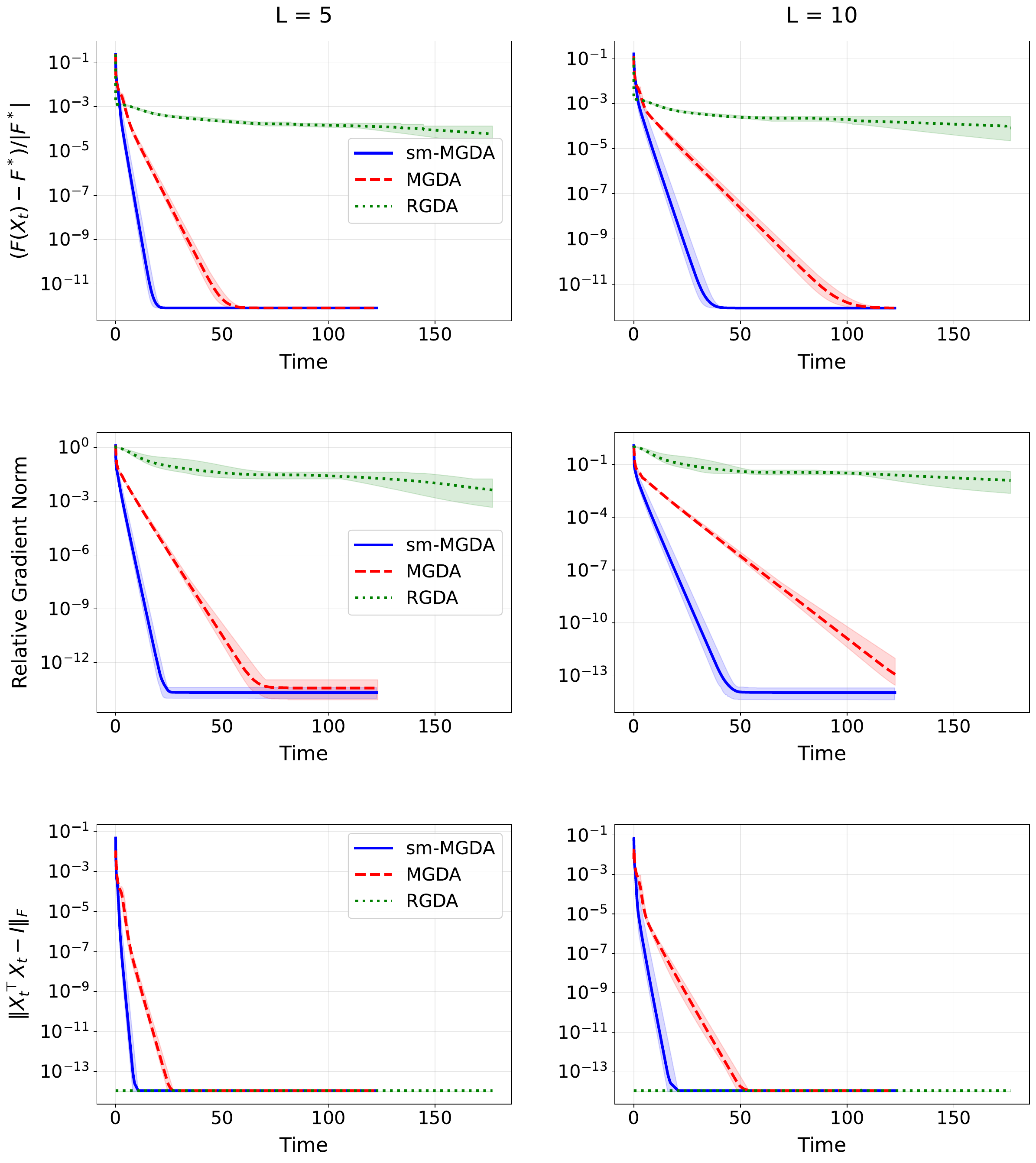}
    \caption{Comparison of \sm{} with MGDA and RGDA on \eqref{eq:toy-stiefel} for $\mu =0$ with matrix variables $x\in\mathrm{St}(500,450)$ for for $L\in\{5,10\}$. 
    }
    \label{fig:toy-example2}
\end{minipage}
\end{figure}

\subsubsection{Case 2: Matrix variables}
In the second scenario, we consider the following quadratic $\mathrm{NCMC}$ minimax problem:
\begin{equation}
\label{eq:toy-stiefel}
\min_{x \in \mathrm{St}(n,k)}\ \max_{y \in Y }\ 
\mathcal{L}(x, y)
\;=\; \tfrac{1}{2}\,\operatorname{tr}\!\big(x^\top Q x\big)\;+\;\operatorname{tr}\!\big(x^\top A y\big),
\end{equation}
where $A,Q \in \mathbb{R}^{n \times n}$ are symmetric matrices, $\mathrm{St}(n,k)=\{x\in\mathbb{R}^{n\times k}: x^\top x=I_k\}$ is the Stiefel manifold, and $Y = \{ y\in\reals^n :\ \|y\| \le 1 \}$.   
We set $n=500$ and $k=0.9n=450$. The matrices $A,Q\in\mathbb{R}^{500\times 500}$ are randomly generated such that
$A = V \Lambda_A V^{\top}$ and $Q = V \Lambda_Q V^{\top}$, where $V\in\mathbb{R}^{500\times 500}$ is orthogonal and $\Lambda_A,\Lambda_Q$ are diagonal. We scale
$\Lambda_Q \;=\; \frac{L\,\Lambda_Q^{0}}{\lVert \Lambda_Q^{0}\rVert_2}$ for $L\in\{5,10\}$,
where the diagonal of $\Lambda_Q^{0}$ is sampled uniformly from $[-1,1]$.
Maximizing over $ y \in Y$ yields the primal objective
$F(x)\;=\;\frac{1}{2}\,\operatorname{tr}(x^\top Q x)\;+\;\lVert A^\top x\rVert_*$,
where $\lVert\cdot\rVert_*$ denotes the nuclear norm. 
The parameters are chosen as 
$\beta=0.9$, $p=2L$, $\tau_1=\frac{1}{6L}$, $\tau_2=\frac{\tau_1}{3}$, $\rho=10$, and $C=1000$ for \sm, 
and we use the same $\tau_1$ and $\tau_2$ for MGDA, i.e., $\tau_1 = \frac{1}{6L}$ and $ \tau_2 = \frac{\tau_1}{3}$.  For \textsc{RGDA}, we set $\tau_1=\frac{1}{16(\mu_y+1)^2L}$ and $\tau_2=\frac{1}{L}$ according to \citep[Theorem~17]{lin2025two}. The results in Figure~\ref{fig:toy-example2} show that \sm\, converges the fastest among \textsc{MGDA} and \textsc{RGDA} in terms of primal values and gradient norms. Furthermore, the manifold constraint violations, i.e., $\norm{x_t^\top x_t-I}$, show that the iterates $x_t$ gradually land on the manifold.

\begin{figure}[htbp!]
\centering
\begin{minipage}{0.48\textwidth}
    \centering
    
    \includegraphics[width=\linewidth]{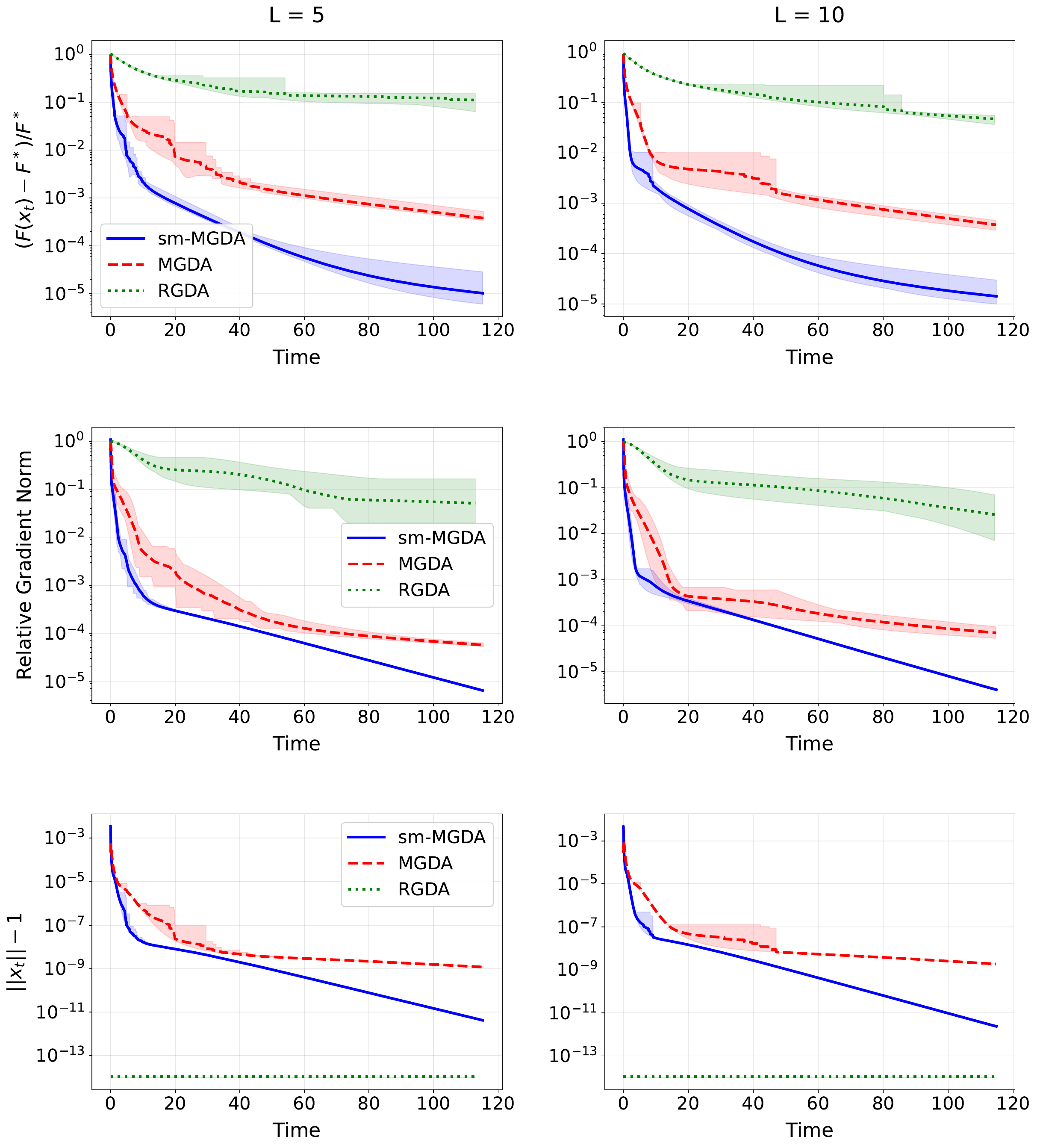}
    \caption{Comparison of \sm{} with MGDA and RGDA on \eqref{toy-example12} for $\mu = 1$ with vector variables $x\in\reals^{n}$ with $n=500$ for $L\in\{5,10\}$. 
    }
    \label{fig:toy-example12}
\end{minipage}
\hfill
\begin{minipage}{0.48\textwidth}
    \centering
    
    \includegraphics[width=\linewidth]{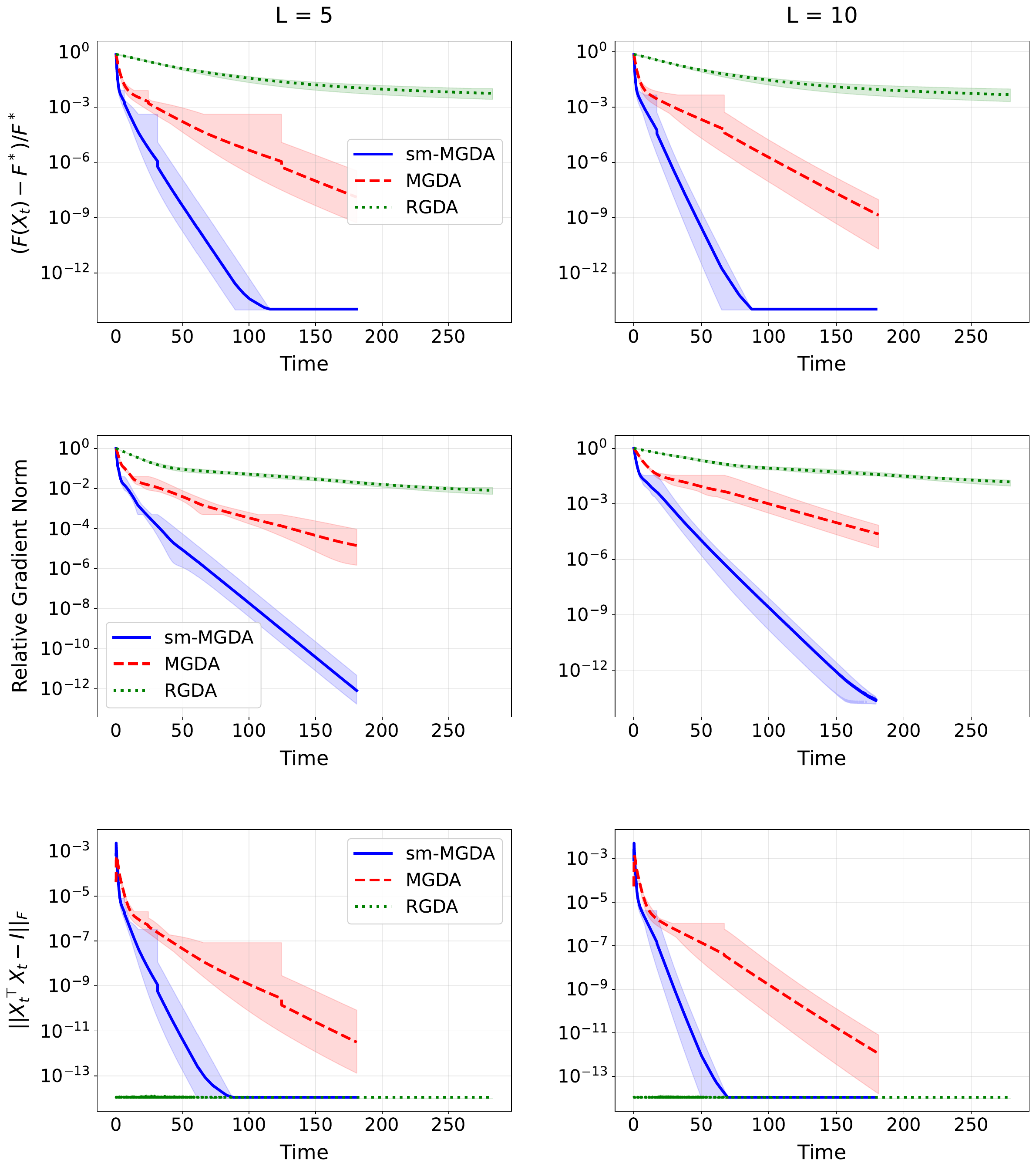}
    \caption{Comparison of \sm{} with MGDA and RGDA on \eqref{toy-example22} for $\mu =1$ with matrix variables $x\in\mathrm{St}(500,450)$ for for $L\in\{5,10\}$. 
    }
    \label{fig:toy-example22}
\end{minipage}
\end{figure}
\subsection{Quadratic NCSC problem ($\mu>0$)}
\subsubsection{Case 1: Vector variables}
Again in the first case, we consider the following quadratic NCSC minimax problem:
\begin{equation} \label{toy-example12}
\min_{x \in \mathcal{M}}\max_{y\in\reals^n} \mathcal{L}(x, y) = \frac{1}{2} x^\top Q x + x^\top A y - \frac{\mu_y}{2} \|y\|^2,
\end{equation}
where $\mathcal{M} = \{x \in \mathbb{R}^n:\ x^{\top}x = 1 \}$ denotes the sphere manifold, $ A,Q \in \mathbb{R}^{n \times n}$ are symmetric matrices and $\mu_y > 0$. This class of problems includes robust principal component analysis~\citep{jordan2022first}, PL game~\citep{chen2022faster,zhang2024jointly}, image processing~\citep{chambolle2011first}, and robust regression~\citep{xu2008robust}. 
Except for setting $\mu_y = 1$ and removing the constraint on $y$, we randomly generate both $Q$ and $A$ as described in Section \ref{ncc} with $n=500$. Given $x\in\reals^n$, we can compute $y^*(x) = \frac{1}{\mu_y } A x$. Consequently, the primal loss function is
$\rv{F} (x) = \frac{1}{2} x^\top \left(Q + \frac{1}{\mu_y} A^2 \right) x$.
Since $x\in\reals^n$ lies in the sphere manifold, the minimum value of $F(\cdot)$ over $\cM$ is $F^* = \frac{1}{2}\lambda_{\min} \left(Q + \frac{1}{\mu_y}A^2 \right)$.
We adopt the same hyperparameters from Section \ref{ncc} for the three algorithms. The results are shown in Figure \ref{fig:toy-example2} for $10$ randomly generated problems.  Similar to Section \ref{ncc}, our \sm{} performs better than MGDA and RGDA.
\subsubsection{Case 2: Matrix variables}
In the second case, we consider the following quadratic NCSC minimax problem:
\begin{equation}
\label{toy-example22}
\min_{x \in \mathrm{St}(m,k)}\ \max_{y \in \mathbb{R}^{n \times k}}\ 
\mathcal{L}(x, y)
\;=\; \tfrac{1}{2}\,\operatorname{tr}\!\big(x^\top Q x\big)\;+\;\operatorname{tr}\!\big(x^\top A y\big)\;-\;\tfrac{\mu_y}{2}\,\|y\|_F^2,
\end{equation}
where $\mathrm{St}(n,k)=\{x\in\mathbb{R}^{n\times k}: x^\top x=I_k\}$ is the Stiefel manifold,   $A,Q\in\mathbb{R}^{n\times n}$ symmetric matrices and $\mu_y>0$. 
Except for setting $\mu_y=1$ and removing the constraint on $y$, we randomly generate both $Q$ and $A$ as described in Section \ref{ncc} with $n=500$. We also set $k =0.9n=450$. 
Given $x\in\reals^{n\times k}$, maximizing \eqref{toy-example22} over $y\in\reals^{n\times k}$ yields 
$
y^*(x) \;=\; \frac{1}{\mu_y}\,A^\top x,
$
and thus the primal objective is
$
F(x)\;=\;\tfrac{1}{2}\,\operatorname{tr}\!\big(x^\top Q x\big)\;+\;\frac{1}{2\mu_y}\,\|A^\top x\|_F^2
\;=\;\tfrac{1}{2}\,\operatorname{tr}\!\Big(x^\top\!\Big(Q+\tfrac{1}{\mu_y}AA^\top\Big)x\Big).
$
Since $x \in \mathrm{St}(n,k)$, the minimum value is
$
F^* \;=\; \tfrac{1}{2}\,\sum_{i=1}^{k}\lambda_i\!\Big(Q+\tfrac{1}{\mu_y}AA^\top\Big).
$
We adopt the same hyperparameters from Section~\ref{ncc} for the three algorithms. 
The results are shown in Figure \ref{fig:toy-example22} for $10$ randomly generated problems.  
Similar to Section~\ref{ncc}, our \sm{} algorithm exhibits the fastest convergence behavior in terms of relative gradient norms and primal function values.
\subsection{Robust DNN training} \label{4-3}
We cast the original robust training against adversarial attacks problem into the following nonconvex-concave problem:
\be\label{eq:nonconvex_concave}
\min_{x \in \mathcal{M}} \max_{u \in \mathcal{U}} \frac{1}{n} \sum_{i=1}^{n} \sum_{j=1}^{C} u_j \ell\left(h\left(a_{ij}^K; x\right), b_i\right) - r(u), \;\;
\text{s.t.} \quad \mathcal{U} = \{ u \in \mathbb{R}^C \mid u \geq 0, \, \|u\|_1 = 1 \},
\ee
where $a_{ij}^K$ is the permuted sample after $K$ iterations of Projected Gradient Descent (PGD) attack~\citep{kurakin2017adversarial}, and $C$ is the number of classes for the dataset. Here $r(u)$ is a convex regularization term, e.g., $r(u) = \alpha \| u - 1/C \|^2$, where $\alpha \geq 0$ is a hyper-parameter. In the experiment, we use Stiefel manifold $\mathcal{M} = \mathrm{St}(r,d) = \left\{ X \in \mathbb{R}^{d \times r} \; : \; X^\top X = I_r \right\}$ on parameters $x$ of DNNs (convolution layers and linear layers). 
The DNN architecture used in our experiments 
is summarized in Table~\ref{tab:dnn_architecture}. To simplify the operation of the Retraction operator, the network structure calls \texttt{mctorch} package \citep{meghwanshi2018mctorch}.
\vspace*{-2mm}
\begin{table}[h]
\centering
\caption{The DNN architecture used in our experiments. $C$ is the number of classes, and $d$ is the number of channels for inputs.}
\label{tab:dnn_architecture}
\begin{tabular}{ll}
\toprule
\textbf{Layer} & \textbf{Configuration} \\
\midrule
Inputs & $d$ channels \\
Conv & $d \rightarrow 32$, Batchnorm, ReLU \\
Conv & $32 \rightarrow 64$, Batchnorm, ReLU \\
Conv & $64 \rightarrow 128$, Batchnorm, ReLU \\
Max Pool & \\
Linear & $512 \rightarrow 256$, Batchnorm, ReLU \\
Linear & $256 \rightarrow 128$, Batchnorm, ReLU  \\
Linear & $128 \rightarrow C$ \\
Outputs & \\
\bottomrule
\end{tabular}
\end{table}

\begin{figure}[h]
    \centering
    \includegraphics[width=1.0\linewidth]{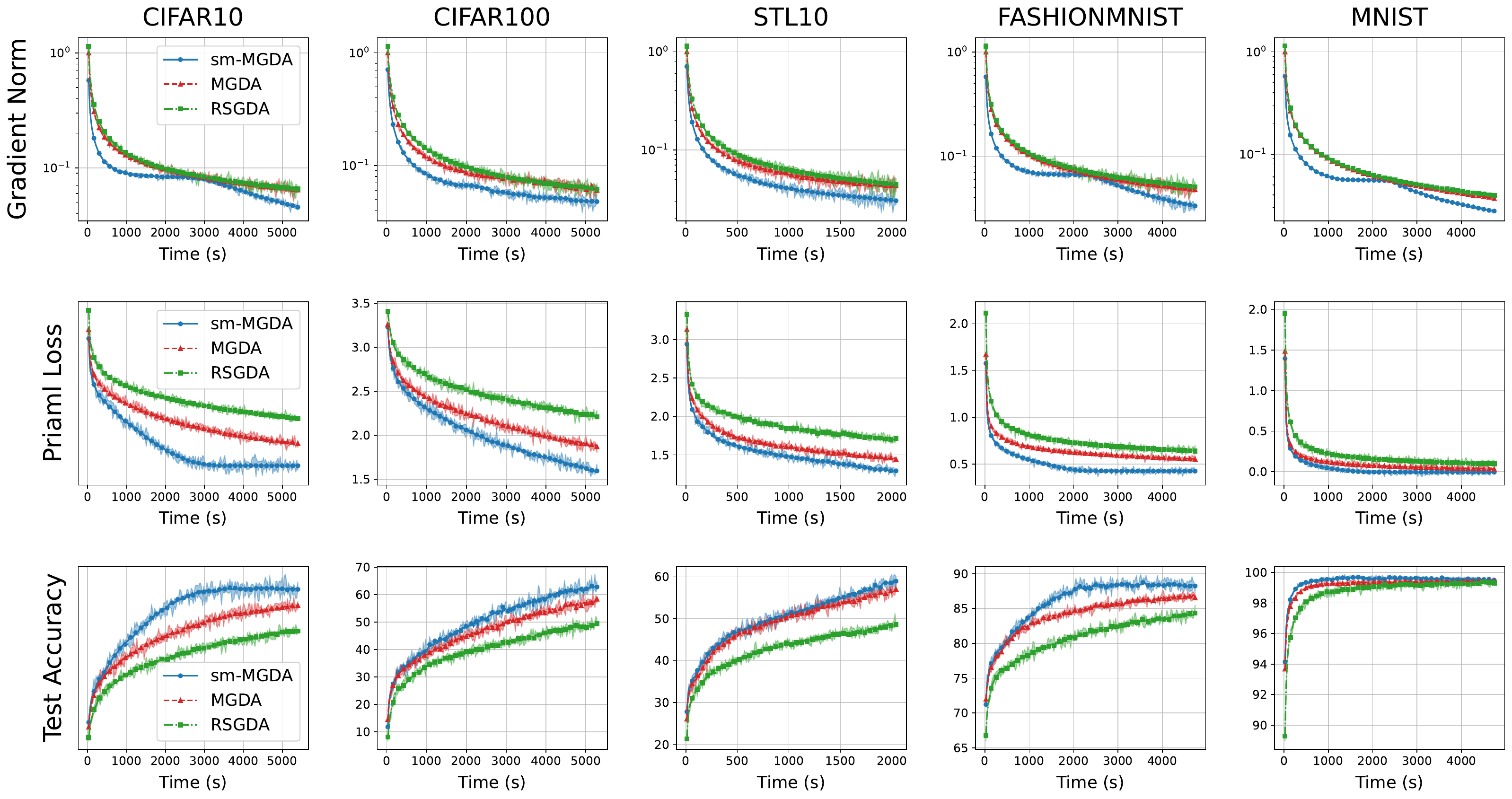}
    \caption{Gradient norm, primal loss, and test accuracy on robust DNN training problems 
    over 3 runs. }
    \label{fig:datasets-robust}
\end{figure}

 We choose five datasets for this experiment: CIFAR10, CIFAR100,  STL10, FashionMNIST, and MNIST. For the CIFAR100 dataset, we selected the data from the first 20 classes for training and testing.
We set $K = 5$, corresponding to five attacks on the data. 
The numerical results against different attacks (i.e., PGD
attack \citep{kurakin2017adversarial} and Fast Gradient Sign Method (FGSM) attack \citep{goodfellow2014explaining}
) are shown in Table \ref{tab:attack_accuracy}.  We set $\tau_1 = \tau_2 = 10^{-3}$, $\beta =0.9$, $p = 1$, $\rho = 10, C = 1000$ for \sm{} with the same $\tau_1$ and $\tau_2$ for MGDA and RSGDA.  In \sm{}, we use an inexact form of $g_t$, given by $\nabla A(x_t)[\nabla_x f(x_t, y_t)] + \frac{\rho}{2}\nabla c(x_t)[c(x_t)]$, to simplify computation. This approximation quality is controlled by the small difference between $x_t$ and $A(x_t)$ when $x_t$ lies close to the manifold. The detailed training trajectories are listed in Figure \ref{fig:datasets-robust}. It is shown that RSGDA has worse performance compared with \sm{} and MGDA, while \sm{} has the best performance in the sense of gradient norm, primal loss, and test accuracy. This demonstrates the enhanced robustness and practical utility of \sm{} in robust DNN training. The results of the test accuracy of the compared algorithms under the attack are summarized in Table \ref{tab:attack_accuracy}, which demonstrates that \sm{} has the highest test accuracy compared with MGDA and RSGDA. 

The manifold error of the model with epoch on robust DNN training task is shown in Figure \ref{fig:stiefel_constraints3}. It is shown that the error has a tendency to decrease with the number of epochs, indicating that the parameter falls on the manifold.

\begin{figure}[h]
    \centering
    \includegraphics[width=1.0\linewidth]{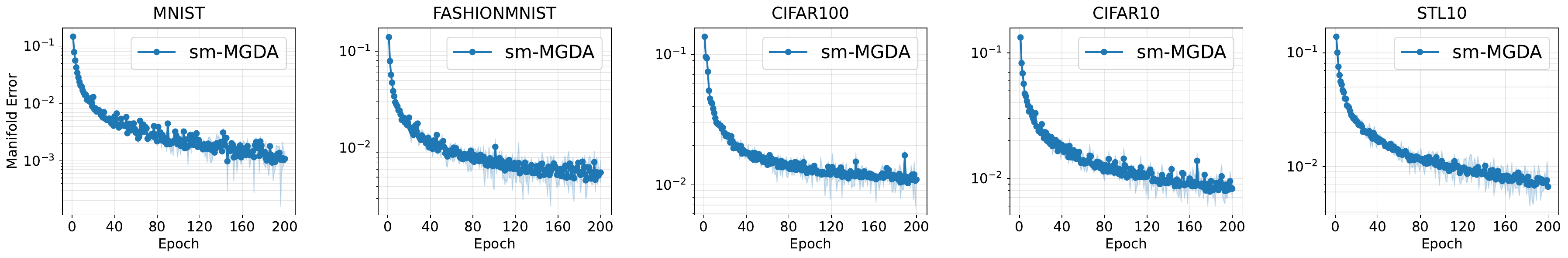}
    \caption{Manifold error of the model with epoch on robust DNN training task.}
    \label{fig:stiefel_constraints3}
\end{figure}

\begin{table}[bht!]
\centering
\small
\setlength{\tabcolsep}{4pt}
\caption{Test accuracy against nature images and different attacks for (from top to bottom) \textbf{CIFRA10, CIFAR100, STL10, FashionMNIST, MNIST} datasets.}
\begin{tabular}{cc|ccc|ccc}
\hline
\textbf{Methods} & \textbf{Original Image} & \multicolumn{3}{c|}{\textbf{PGD$^{40}$ $L_\infty$}} & \multicolumn{3}{c}{\textbf{FGSM $L_\infty$}} \\
\cline{3-8}
& & $\epsilon=0.005$ & $\epsilon=0.01$ & $\epsilon=0.02$ & $\epsilon=0.005$ & $\epsilon=0.01$ & $\epsilon=0.02$ \\
\hline
MGDA       & 75.06\% & 72.74\% & 70.48\% & 66.01\% & 72.76\% & 70.56\% & 66.41\% \\
RSGDA        & 66.22\% & 64.04\% & 61.31\% & 56.19\% & 64.06\% & 61.44\% & 56.77\% \\
\textbf{\sm{}}  &  \textbf{80.22\%} & \textbf{77.73\%} & \textbf{75.06\%} & \textbf{69.11\%} & \textbf{77.78\%} & \textbf{75.29\%} & \textbf{70.12\%} \\
\hline
\hline
 & & $\epsilon=0.005$ & $\epsilon=0.01$ & $\epsilon=0.02$ & $\epsilon=0.005$ & $\epsilon=0.01$ & $\epsilon=0.02$ \\
\hline
MGDA       & 58.40\% & 55.15\% & 51.80\% & 45.50\% & 55.15\% & 52.10\% & 46.15\% \\
RSGDA        & 47.20\% & 47.27\% & 43.10\% & 31.80\% & 49.98\% & 46.67\% & 37.40\% \\
\textbf{\sm{}}  & \textbf{62.35\%} & \textbf{59.55\%} & \textbf{57.00\%} & \textbf{50.90\%} & \textbf{59.55\%} & \textbf{56.95\%} & \textbf{50.90\%} \\
\hline
\hline
& & $\epsilon=0.005$ & $\epsilon=0.01$ & $\epsilon=0.02$ & $\epsilon=0.005$ & $\epsilon=0.01$ & $\epsilon=0.02$ \\
\hline
MGDA       & 57.04\% & 54.26\% & 52.18\% & 47.02\% & 54.26\% & 54.18\% & 46.19\% \\
RSGDA        & 51.86\% & 47.90\% & 45.85\% & 34.90\% & 53.95\% & 51.10\% & 46.05\% \\
\textbf{\sm{}}  & \textbf{58.49\%} & \textbf{55.85\%} & \textbf{53.29\%} & \textbf{47.88\%} & \textbf{55.85\%} & \textbf{53.41\%} & \textbf{48.76\%} \\
\hline
\hline
& & $\epsilon=0.05$ & $\epsilon=0.1$ & $\epsilon=0.2$ & $\epsilon=0.05$ & $\epsilon=0.1$ & $\epsilon=0.2$ \\
\hline
MGDA       & 86.53\% & 80.70\% & 72.82\% & 49.08\% & 81.12\% & 77.70\% & 67.77\% \\
RSGDA      & 84.72\% & 79.40\% & 70.52\% & 51.46\% & 78.41\% & 70.52\% & 61.15\% \\
\textbf{\sm{}}  & \textbf{87.73\%} & \textbf{82.12\%} & \textbf{75.11\%} & \textbf{55.80\%} & \textbf{82.85\%} & \textbf{77.75\%} & \textbf{67.87\%} \\
\hline
\hline
& & $\epsilon=0.1$ & $\epsilon=0.2$ & $\epsilon=0.4$ & $\epsilon=0.1$ & $\epsilon=0.2$ & $\epsilon=0.4$ \\
\hline
MGDA       & 99.29\% & 98.21\% & 95.39\% & 83.43\% & 98.53\% & 97.03\% & 92.41\% \\
RSGDA      & 99.26\% & 98.11\% & 95.83\% & 84.76\% & 93.05\% & 91.26\% & 90.81\% \\
\textbf{\sm{}}  & \textbf{99.45\%} & \textbf{98.60\%} & \textbf{96.69\%} & \textbf{87.42\%} & \textbf{98.69\%} & \textbf{97.62\%} & \textbf{92.91\%} \\
\hline
\end{tabular}
\label{tab:attack_accuracy}
\end{table}

\section{Conclusion}
\vspace{-0.2cm}
We propose a retraction-free smoothed manifold gradient descent-ascent (\sm{}) method for solving minimax problems over compact submanifolds through solving an exact penalized problem with an additional norm constraint. \rv{We establish convergence guarantees under nonsmooth PL condition for weakly convex functions, and more specifically under both merely and strongly concave settings,} and show that the algorithm asymptotically recovers stationarity of \rv{the original manifold minimax problem.} 
Numerical results on robust training and risk-sensitive learning tasks demonstrate the practical advantages of our approach over existing methods.

Our work motivates several interesting directions. First, a natural next step is to extend our algorithm and analysis to general submanifolds defined by smooth equality constraints $\{x : c(x) = 0\}$. 
Finally, we also plan to study the stochastic version of \sm{}, which is essential for solving large-scale manifold minimax problems.

\bibliographystyle{plain}
\bibliography{references}

\newpage
\section{Proofs}
\subsection{Proof of Lemma~\ref{lem:lip-f}}
\label{sec:Lipschitz-proof}

\sa{Note that for any $(x,y)\in X\times Y$,} we have $\nabla_y \tilde{f}(x,y) = \nabla_y f(A(x), y)$ and 
\[ \nabla_x \tilde{f}(x,y) = \nabla A(x)[\sa{\nabla_x} f(A(x), y)] + \rho x(x^\top x - I_r),   \]
where $\sa{\nabla} A(x) [u] := u (\frac{3}{2}I_r - \frac{1}{2}x^\top x) - x {\rm sym}(x^\top u)$.
\sa{Since $\|x\| \leq C$, 
\cref{assum:lip-f} implies that}
\[ \|\nabla A(x)\|_{\rm op}:= \max_{u \ne 0}\frac{\|\nabla A(x) [u]\|}{\|u\|}  \leq \frac{3}{2} + \frac{1}{2}\|x^\top x\| + 
\sa{\|x\|^2} \leq \frac{3}{2} + \frac{3}{2} \|x\|^2 \leq \frac{3}{2} + \frac{3}{2}C^2 \]
\sa{for all $x\in X$,} where we use $\|xy\|\leq \|x\|_2\|y\| \leq \|x\|\|y\|.$

\textit{Lipschitz Continuity of $\nabla_y \tilde{f}$.}  
It directly follows from the expression of \sa{$\nabla_y \tilde{f}$ that we have
    $l_{yy} = L_{yy}$.} 
    
    \rv{Next, we consider $l_{yx}$.} \sa{
    Given $x_1,x_2\in X$ and $y\in Y$, we have 
    \begin{align*}
        \MoveEqLeft \norm{\nabla_y\tilde f(x_1,y)-\nabla_y\tilde f(x_2,y)}\\
        &=\norm{\nabla_y f(A(x_1),y)-\nabla_y f(A(x_2),y)}\leq L_{yx}\norm{A(x_1)-A(x_2)}\leq \frac{3}{2}L_{yx}(1+C^2)\norm{x_1-x_2},
    \end{align*}
    where we used the uniform bound on $\norm{\nabla A(\cdot)}_{\rm op}$. Thus, $l_{yx} = \frac{3}{2}(1+C^2) L_{yx}$.} 

\textit{Lipschitz Continuity of $\nabla_x \tilde{f}$.}  
From the expression of \rv{$\nabla_x \tilde{f}$, for any $x\in X$ and $y_1,y_2\in Y$,} we have
    \[ \begin{aligned}
       \| \nabla_x \tilde{f}(x,y_1) - \nabla_x \tilde{f}(x,y_2)\| = & \| \nabla A(x) [ \nabla_x f(A(x), y_1) - \nabla_x f(A(x), y_2)] \| \\
       \leq &  \|\nabla A(x)\|_{\rm op} L_{xy} \|y_1 - y_2\| \leq \frac{3}{2}(1+C^2)L_{xy} \|y_1 -y_2\|,  
    \end{aligned}
\]
which implies that $l_{xy} = \frac{3}{2}(1+C^2) L_{xy}$.

\sa{Next, we consider $l_{xx}$.} 
Define $\sa{s(x)} := x(x^\top x - I_r)$. Then $\nabla s(x)[u] = u (x^\top x - I_r) + x(u^\top x + x^\top u)$, which implies that
$ \|\nabla s(x) \|_{\rm op} = \max_{u \ne 0} \frac{\|\nabla s(x)[u]\|}{\|u\|} \leq C^2 +1  + 2C^2 = 3C^2 + 1$ for all $x \in X$. \rv{Moreover, let $\bar l_x:= \max_{x \in X, y \in Y} \|\nabla_x f(A(x), y)\|$. Since $A(x) := x(\frac{3}{2}I_r - \frac{1}{2}x^\top x)$, we also have $\norm{A(x)}\leq C\frac{1}{2}(3+C^2)$ for all $x\in X$. Therefore, $\bar l_x\leq \max\{\norm{\nabla_x f(x,y)}:\  \|x\| \leq C(\frac{3}{2} + \frac{1}{2}C^2),\ y \in Y\}$. 
For any $x_1,x_2\in X$ and $y\in Y$,} we have
\[ \begin{aligned}
    & \| \nabla_x \tilde{f}(x_1,y) - \nabla_x \tilde{f}(x_2, y) \| \\
    = & \|\nabla A(x_1) [\nabla_x f(A(x_1), y)] - \nabla A(x_2) [\nabla_x f(A(x_2), y)] \| + \rho \|s(x_1) - s(x_2)\| \\
    \leq & \|\Big(\nabla A(x_1)- \nabla A(x_2)\Big) [\nabla_x f(A(x_1), y)] 
    \| + \|\nabla A(x_2)[\nabla_x f(A(x_1), y) - \nabla_x f(A(x_2), y)]  \| \\
    & + 
    \rho \max_{x \in X} \|\nabla s(x)\|_{\rm op} \|x_2 - x_1\|
    \\ 
    \leq & \rv{3}C \bar l_x \|x_2 -x_1\| + \left(\frac{9}{4}(1+C^2)^2 \right) L_{xx} \|x_2 - x_1\| + \rho(3C^2 + 1)\|x_2 - x_1\| \\
    \leq &\left( 
    \rv{3}C \bar l_x + \frac{9}{4}(1+C^2)^2L_{xx} + \rho(3C^2 +1)\right) \|x_2 - x_1\|, 
\end{aligned}
\] 
\rv{where we used the definition of $\bar l_x$ and}
\[ \begin{aligned}
   \max_{\|x_1\| \leq C, \|x_2\| \leq C} \| \Big(\nabla A(x_1)-\nabla A(x_2)\Big)[u] 
   \| 
   \leq & 
   \rv{\frac{1}{2} \|x_1^\top x_1 - x_2^\top x_2 \|} \|u\| + (\|x_1\| + \|x_2\|)\|x_1 - x_2\|\|u\| 
   \\
   = & \rv{\frac{1}{2} \|(x_1 -x_2)^\top x_1 + x_2^\top (x_1-x_2) \|} \| u\|  
   + (\|x_1\| + \|x_2\|)\|x_1 - x_2\|\|u\|  \\
   \leq & \rv{3C \|x_1 -x_2 \|}\|u\|.
\end{aligned}
 \]
Thus, we have $l_{xx} = \rv{3}C \bar l_x + \frac{9}{4}(1+C^2)^2L_{xx} + \rho(3C^2 +1)$.
\qed
\subsection{Proof of Lemma \ref{lem:equiv-stationary}}
\label{sec:eps-stationary-connection-proof}
    \rv{Let $x_\epsilon = usv^\top$ denote the compact SVD of $\xe$ and $\reals^r_+\ni \sigma={\rm diag}(s)$. Since ${\rm dist}(x_\epsilon, \Mcal) \leq \frac{1}{2}$, we have $\frac{1}{2}\leq \sigma_i\leq \frac{3}{2}$ for $i=1,\ldots, r$, which also implies that $\frac{1}{2}\leq \norm{x_\epsilon}_2\leq \frac{3}{2}$.}
    \rv{Since} $\Mcal$ is the Stiefel manifold and $A(x) = x (\frac{3}{2} I_r - \frac{1}{2} x^\top x)$ \sa{for $x\in X$}, 
    \rv{\citep[Proposition 2.8]{xiao2024solving2} implies that} 
    \[ \nabla_x \tilde{f}(x, y) = G(x,y)
    + \rho x(x^\top x - I_r),\quad\forall~(x,y)\in X\times Y,\]
    \rv{where} $G(x,y)\rv{:=}  \nabla_x f(A(x),y) (\frac{3}{2} I_r - \frac{1}{2}x^\top x) - x{\rm sym}(x^\top \nabla_x f(A(x),y))$. 
    \rv{Then, for $(\xe,\ye)$ 
    we get}
    \begin{equation}
    \label{eq:grad_x-lower-bound}
        \begin{aligned}
        \|\rv{\nabla_x} \tilde{f}(\xe,\ye)\|^2 = & \| G(\xe,\ye) \| ^2 + \rho^2 \|\xe(\xe^\top \xe - I_r) \|^2  + \rv{2\rho} \iprod{G(\xe,\ye)}{\xe(\xe^\top \xe - I_r)} \\ 
        = & \| G(\xe,\ye) \| ^2 + \rho^2 \|\xe(\xe^\top \xe - I_r) \|^2  - 
        \rv{3\rho}\iprod{{\rm sym}(\xe^\top\nabla_x f(A(\xe),\ye))}{(\xe^\top \xe - I_r)^2} \\
        \geq & \| G(\xe,\ye) \| ^2 + \rho^2 \|\xe(\xe^\top \xe - I_r) \|^2  - 
        \rv{3\rho}\|\xe^\top\nabla_x f(A(\xe),\ye)\| \|\xe^\top \xe - I_r \|^2 \\
        \geq & \| G(\xe,\ye) \| ^2 + \rho^2 \|\xe(\xe^\top \xe - I_r) \|^2  - \mod{3\rho \cdot} \frac{3}{2} \rv{\tilde L_x} \cdot \mod{4} \|\xe(\xe^\top \xe - I_r)\|^2 \\
        = & \| G(\xe,\ye) \| ^2 + \rho(\rho - {18} \rv{\tilde l_x}) \|\xe(\xe^\top \xe - I_r) \|^2 \\
        \geq & \| G(\xe,\ye) \| ^2 + \frac{\rho^2}{2}  \|\xe(\xe^\top \xe - I_r) \|^2, 
    \end{aligned} 
    \end{equation} 
    where \rv{$\tilde L_x:=L_x(\ye) = \max\{\|\nabla_x f(x,\ye)\|:\ \|x\|_2 \leq 1\}$.} Here, the second 
    \rv{equation} follows from the definition of $G(\cdot,\cdot)$ \rv{and the fact that $\fprod{A,B}=\fprod{{\rm sym}(A),B}$ whenever $B$ is symmetric, the first inequality is due to $\norm{{\rm sym}(A)}\leq \norm{A}$ and $\fprod{A,B}\leq\norm{A}\norm{B}$. Next, we argue for the the second inequality. Note that $$\|A(\xe)\|_2 =\frac{1}{2} \max_{i=1,\ldots,r} \sigma_i(3-\sigma_i^2)\leq \frac{1}{2}\max\Big\{t(3-t^2):\ t\in \Big[\frac{1}{2},\frac{3}{2}\Big]\Big\}=1.$$ 
    Thus, $\|\nabla_x f(A(x_\epsilon),\ye)\|\leq \tilde L_x$. Consequently, the second inequality follows from $\|\xe(\xe^\top \xe - I_r)\| \geq \frac{1}{2} \|\xe^\top \xe - I_r\|$ where we use $\norm{AB}\geq \sigma_{\rm min}(A) \cdot \norm{B}$, and $\sigma_{\rm min}(A)\geq 0$ denotes the minimum singular value of $A$}. Finally, the last line uses $\rho \geq {36} \rv{\tilde L_x}$.  
    
    In addition, it holds $\Pcal_{\Mcal}(\xe) = uv^\top$, and \rv{using $\|\xe(\xe^\top \xe - I_r)\| \geq \frac{1}{2} \|\xe^\top \xe - I_r\|$, we also get}
    \[  \|\xe - \Pcal_{\Mcal}(\xe)\| = \|usv^\top - uv^\top\| = \|s - I_r\| \leq \|s^2 - I_r\| = \|\xe^\top \xe - I_r\| \leq 2 \| \xe (\xe^\top \xe - I_r) \|.   \]
    Thus, we get $\|\xe - \Pcal_{\Mcal}(\xe)\| \leq \frac{3}{\rho} \epsilon$, which follows from \rv{using \eqref{eq:grad_x-lower-bound} together with}
    \begin{equation}
    \label{eq:distance-control}
    \|\xe- \Pcal_{\Mcal}(\xe)\|^2  \leq 4\|\xe(\xe^\top \xe- I_r)\|^2 \leq \frac{8}{\rho^2} \|\rv{\nabla_x} \tilde{f}(\xe,\ye)\|^2\leq \rv{\frac{9}{\rho^2}\epsilon^2}.
    \end{equation}
    
   \rv{Next, we provide a bound on $\norm{\grad_x f(\Pcal_{\Mcal}(\xe), \ye)}$. Using the definition of $G(\xe,\ye)$ and the expression for the Riemannian gradient given in \eqref{eq:Riemannian_grad}, we get}
    \begin{align}
    \MoveEqLeft \| \rv{\grad_x} f(\Pcal_{\Mcal}(\xe), \ye) - G(\xe,\ye)\| \nonumber\\
        =&  \|\nabla_x f(\Pcal_{\Mcal}(\xe),\ye) - \Pcal_{\Mcal}(\xe){\rm sym}(\Pcal_{\Mcal}(\xe)^\top \nabla_x f(\Pcal_{\Mcal}(\xe) ,\ye)) \nonumber\\
        & -\nabla_x f(A(\xe),\ye)\Big(\frac{3}{2}I_r - \frac{1}{2} \xe^\top \xe\Big) - \xe{\rm sym}(\xe^\top \nabla_x f(A(\xe),\ye)) \| \nonumber\\
        \leq & \frac{1}{2}\|\nabla_x f(A(\xe),\ye)\| \|\xe^\top \xe - I_r\| + L_{xx} \| \Pcal_{\Mcal}(\xe) - A(\xe)\| \nonumber\\
        & +(\Pcal_{\Mcal}(\xe) - \xe) {\rm sym}\Big(\Pcal_{\Mcal}(\xe)^\top \nabla_x f(\Pcal_{\Mcal}(\xe) ,\ye)\Big)\nonumber\\ 
        &+ \xe {\rm sym}\Big(\Pcal_{\Mcal}(\xe)^\top \nabla_x f(\Pcal_{\Mcal}(\xe) ,\ye) - \xe^\top \nabla_x f(A(\xe),\ye)\Big) \nonumber\\
        \leq & \frac{1}{2}\|\nabla_x f(A(\xe),\ye)\| \|\xe^\top \xe - I_r\| + L_{xx} \| \Pcal_{\Mcal}(\xe) - A(\xe)\| + \rv{\tilde L_x} \| \Pcal_{\Mcal}(\xe) - \xe \| \nonumber\\
        & +\rv{\|\xe\|_2} \Big(\| \Pcal_{\Mcal}(\xe) - \xe \| \| \nabla_x f(\Pcal_{\Mcal}(\xe) ,\ye)) \| + \rv{\|\xe\|_2} \| \nabla_x f(\Pcal_{\Mcal}(\xe) ,\ye) - \nabla_x f(A(\xe),\ye)\|\Big) \nonumber\\
        \leq & \frac{1}{2}\|\nabla_x f(A(\xe),\ye)\| \|\xe^\top \xe - I_r\| + L_{xx} \| \Pcal_{\Mcal}(\xe) - A(\xe)\| + \rv{\tilde L_x} \| \Pcal_{\Mcal}(\xe) - \xe \| \nonumber\\
        & +\frac{3}{2}\rv{\tilde L_x} \| \Pcal_{\Mcal}(\xe) - \xe \| + \frac{9}{4} L_{xx} \| \Pcal_{\Mcal}(\xe) -A(\xe) \|. \label{eq:gradx-G-bound}
    \end{align}
    \rv{Furthermore, since $x_\epsilon = usv^\top$, we also have} $A(\xe) = \rv{\frac{1}{2}}us(\rv{3}I_r - s^2)v^\top$; \rv{hence,}  
    \be \label{eq:bound-ax}
    \begin{aligned}
    \MoveEqLeft \| \Pcal_{\Mcal}(\xe) - A(\xe)\| = \|u\Big(I_r-\frac{1}{2}s(3I_r - s^2)\Big)v^\top \| \\
    & = \|(s - I_r)\Big(\frac{1}{2}(s^2 + s)-I_r\Big) \| \leq \|s- I_r\| = \| \Pcal_{\Mcal}(\xe) - \xe\|,
    \end{aligned}
    \ee
    \rv{where the inequality follows from $\norm{\frac{1}{2}(s^2 + s)-I_r}_2\leq 1$, which is implied by $\frac{1}{2}\leq \sigma_i\leq \frac{3}{2}$ for $i=1,\ldots, r$. Moreover, also note that $\|\xe(\xe^\top \xe - I_r)\| \geq \frac{1}{2} \|\xe^\top \xe - I_r\|$ together with \eqref{eq:distance-control} gives us $\norm{\xe^\top\xe-I_r}\leq \frac{3}{\rho}$. Finally, \eqref{eq:grad_x-lower-bound} implies that $\norm{G(\xe,\ye)}\leq\epsilon$. Therefore, using \eqref{eq:gradx-G-bound}, we get}
    \begin{align*}
        \|\grad_x f(\Pcal_{\Mcal}(\xe), \ye) \| 
        &\leq \| G(\xe,\ye) \| + \| \grad_x f(\Pcal_{\Mcal}(\xe), \ye) - G(\xe,\ye)\| \\
        &\leq \epsilon + \frac{3}{\rho}\Big(\frac{7}{2} \rv{\tilde L_x}+ \frac{13}{4} L_{xx}\Big) \epsilon. 
    \end{align*}
    \rv{Finally, the desired bound for ${\rm dist}\Big(0,-\nabla_y f(\Pcal_{\Mcal}(\xe), \ye)+\partial h(\ye)\Big)$
     directly follows from 
     the Lipschitz continuity of $\nabla_{y} f$ together with \eqref{eq:distance-control} and \eqref{eq:bound-ax}.} \qed  

\section{Proof of \cref{thm}}
\label{sec:pl-proof}
\sa{In the rest $\cX$ and $\cY$ denote finite dimensional Euclidean vector spaces.} Let us start with some necessary notations. Let
\begin{itemize}
\item \rv{$\tilde f_r(x,y):=\tilde f(x,y)- h(y)$ with $\tilde{f}(x,y):= f(A(x),y) + \frac{\rho}{4}\|c(x)\|^2$ for $x\in X$ and $y\in\cY$;}
\item \sa{\rv{$\Phi^*:=\min_{x\in X}\Phi(x)$ with} $\Phi(x):=\max_{y\in {\cY}}\tilde f_r(x,y)$;}
\item $\hat{f}_r(x, y ; z):=\hat{f}(x, y ; z) - h(y)$ with $\hat{f}(x, y ; z):=\tilde{f}(x, y)+\frac{p}{2}\|x - z\|^2$ for any $(x,y)\in X\times \rv{Y}$ and $z\in\cX$. \rv{Consider the auxiliary 
problem $\min_{x\in X}\max_{y\in\cY}\hat f_r(x,y;z)$.} \sa{For any $y\in \rv{Y}$, due to \cref{lem:lip-f}, $\tilde f(\cdot,y)$ is $l$-weakly convex \rv{over $X$}; hence, $\hat f(\cdot,y;z)$ is $(p-l)$-strongly convex on $X$ for any fixed $z\in\cX$ and $y\in\rv{Y}$.}
 \item \rv{$\Phi(x ; z):=\max_{y \in \cY} \hat{f}_r(x, y ; z)$ for $x\in X$ and $z\in\cX$,} denotes the primal function of the auxiliary problem \rv{--note that $\Phi(x;z)=\Phi(x)+\frac{p}{2}\norm{x-z}^2$ for any $x\in X$ and $z\in \cX$;}
 \item \rv{$\Psi_r(y ; z):=\Psi(y ; z)-h(y)$} with $\Psi(y ; z):=\min_{x\in \sa{X}} \hat{f}(x, y ; z)$ for $y\in \rv{Y}$ and $z\in\cX$, 
  denotes the dual function of the auxiliary problem;
 \item $P(z):=\min_{x\in \sa{X}} \max_{y\in \hjn{\cY}} \rv{\hat{f}_r(x, y ; z)}$ is the optimal value for the auxiliary primal problem, i.e., $\rv{P(z)=}\min_{x\in \sa{X}}\Phi(x;z)$ for any fixed $z\in\cX$;
 \item $x^*(y, z):=\argmin_{x\in \sa{X}} \hat{f}_r(x, y ; z) =\argmin_{x\in \sa{X}} \hat{f}(x, y ; z)
 $ for any 
 $y\in \rv{Y}$ and $z\in\cX$;
 \item $x^*(z):=\argmin_{x \in \sa{X}} \Phi(x ; z)$ is the optimal solution \rv{to the auxiliary primal problem} for any fixed $z\in\cX$;
\item $\hjn{Y}^*(z):=\argmax_{y \in \hjn{\cY}} \rv{\Psi_r(y ; z)}$ is \rv{the set of optimal solutions} to the auxiliary dual problem for any fixed $z\in\cX$;
\item $\hjn{y^{+}(z_t):= {\rm prox}_{\tau_2 h}(\rv{y_t+\tau_2 {\nabla_y \tilde f\left(x^*(y_t, z_t), y_t\right)}})}$ for any $y_t\in Y$ and $z_t\in\cX$, and it denotes \rv{a prox-gradient update in $y$ 
corresponding to
the dual function $\Psi_r(\cdot;z)$ since $\Psi_r(\cdot;z)=\Psi(\cdot;z)-h(\cdot)$ and $\nabla\Psi(y;z)=\nabla_y \tilde f(x^*(y,z),y)$;}
\item \rv{$V(x,y;z):=\rv{\hat{f}_r}\left(x, y ; z\right) -2 \rv{\Psi_r}\left(y ; z\right)+2 P\left(z\right)$ for $x\in X$, $y\in Y$ and $z\in\cX$ denotes the potential function.}
\end{itemize}%

Based on Assumptions \ref{lem:lip-f} and \ref{assum:sc}, we first list some helpful results from \citep{zhang2020single,yang2022faster}.
\begin{lemma}
\label{lem:PLPL-switch}
\rv{Let $\varphi:\reals^n\times\reals^m\to \reals\cup\{\pm\infty\}$ with domain $\dom \varphi:=\{(x,y)\in\reals^n\times\reals^m:\ -\infty<\varphi(x,y)<\infty\}$. Let $\mathcal{D}_x=\{x:\ \exists~y\ {\rm s.t.}\ (x,y)\in\dom \varphi\}$ and $\mathcal{D}_y=\{y:\ \exists~x\ {\rm s.t.}\ (x,y)\in\dom \varphi\}$. Suppose that 
\begin{enumerate}
    \item[(i)] $\varphi(\cdot,y)$ is a proper, closed $\zeta_x$-weakly convex function that is $\mu_x$-PL uniformly for all $y\in \mathcal{D}_y$;
    \item[(ii)] $-\varphi(x,\cdot)$ is a proper, closed $\zeta_y$-weakly convex function that is $\mu_y$-PL uniformly for all $x\in \mathcal{D}_x$.
\end{enumerate}}%

\rv{If there exists $(x^*,y^*)\in\dom \varphi$ such that $0\in\partial_x \varphi(x,y^*)\mid_{x=x^*}$ and $0\in\partial_y -\varphi(x^*,y)\mid_{y=y^*}$, then $(x^*,y^*)$ is a saddle point, i.e., $\varphi(x^*,y)\leq \varphi(x^*,y^*)\leq \varphi(x,y^*)$ for all $(x,y)\in \reals^n\times\reals^m$; hence, the minimax switch holds: $\min_{x}\max_y \varphi(x,y) = \max_y \min_x \varphi(x,y)$.}
\end{lemma}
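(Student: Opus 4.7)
The plan is to exploit the fact that, under weak convexity, the Fréchet subdifferential characterizes stationarity in a globally meaningful way, and then use the PL inequality to promote any zero-subgradient point to a global minimizer of the corresponding slice. First, I would fix $y^*$ and consider the slice $\varphi(\cdot, y^*)$, which by hypothesis is proper, closed, $\zeta_x$-weakly convex and $\mu_x$-PL. The PL hypothesis (Definition~\ref{def:PL}) requires the argmin of this slice to be nonempty; call its optimal value $\varphi_x^*(y^*)$. Since $0 \in \partial_x \varphi(x, y^*)|_{x=x^*}$, one has $\mathrm{dist}^2(0, \partial_x \varphi(x^*, y^*)) = 0$, and the PL inequality
\begin{equation*}
2\mu_x \bigl(\varphi(x^*, y^*) - \varphi_x^*(y^*)\bigr) \leq \mathrm{dist}^2(0, \partial_x \varphi(x^*, y^*)) = 0
\end{equation*}
forces $\varphi(x^*, y^*) = \varphi_x^*(y^*)$, i.e., $\varphi(x^*, y^*) \leq \varphi(x, y^*)$ for every $x$.

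Next, I would run the symmetric argument on the slice $-\varphi(x^*, \cdot)$, which is $\zeta_y$-weakly convex and $\mu_y$-PL by assumption. The hypothesis $0 \in \partial_y(-\varphi(x^*, y))|_{y=y^*}$ combined with PL yields $-\varphi(x^*, y^*) = \min_y -\varphi(x^*, y)$, hence $\varphi(x^*, y) \leq \varphi(x^*, y^*)$ for every $y$. Putting both inequalities together produces the saddle-point relation $\varphi(x^*, y) \leq \varphi(x^*, y^*) \leq \varphi(x, y^*)$ for all $(x,y)$. The minimax equality then follows from the standard sandwich
\begin{equation*}
\min_x \max_y \varphi(x, y) \;\leq\; \max_y \varphi(x^*, y) \;=\; \varphi(x^*, y^*) \;=\; \min_x \varphi(x, y^*) \;\leq\; \max_y \min_x \varphi(x, y),
\end{equation*}
combined with the trivial weak duality $\max_y \min_x \varphi(x, y) \leq \min_x \max_y \varphi(x, y)$.

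The main technical obstacle, albeit a modest one, is justifying that the PL condition of Definition~\ref{def:PL} can be applied slicewise. Concretely, Definition~\ref{def:PL} is formulated for a single proper closed weakly convex function with nonempty argmin, so the ``uniformly'' qualifiers in assumptions (i)--(ii) must be read as: for every admissible slice, the argmin is nonempty and the constant $\mu_x$ (resp.\ $\mu_y$) works. Once this reading is adopted, the second delicate point--that $0 \in \partial_x \varphi(x^*, y^*)$ in the Fréchet sense genuinely drives $x^*$ to be a global (not merely local) minimizer--is handled automatically, since weak convexity of the slice implies that $\varphi(\cdot, y^*) + \frac{\zeta_x}{2}\|\cdot\|^2$ is convex and hence its Fréchet subgradients are bona fide convex subgradients, making the PL inequality applicable without subtleties.
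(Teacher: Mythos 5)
Your proof is correct and follows essentially the same route as the paper's: apply the PL inequality to the $x$-slice at $(x^*,y^*)$ to show $x^*$ globally minimizes $\varphi(\cdot,y^*)$, apply it symmetrically to the $y$-slice to show $y^*$ globally maximizes $\varphi(x^*,\cdot)$, and combine to get the saddle-point inequalities and the minimax switch. Your version is slightly more explicit on the sandwich argument for the minimax equality and on the slicewise reading of the PL hypothesis, both of which the paper leaves implicit, but the substance is identical.
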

\begin{proof}
\rv{The proof proceeds along the lines of \citep[Lemma~2.1]{yang2020global}. In particular, suppose $(x^*,y^*)$ is a stationary point, i.e., $0\in\partial_x \varphi(x,y^*)\mid_{x=x^*}$ and $0\in\partial_y -\varphi(x^*,y)\mid_{y=y^*}$. From \cref{assum:sc}, it follows that
\begin{align*}
    \varphi(x^*,y^*)-\min_x \varphi(x,y^*)\leq \frac{1}{2\mu_x}{\rm dist}^2(0, \partial_x p(x^*,y^*))=0,\\
    \max_y \varphi(x^*,y) - \varphi(x^*,y^*) \leq \frac{1}{2\mu_y}{\rm dist}^2(0, \partial_y -\varphi(x^*, y^*))=0;
\end{align*}
hence, we can conclude that $\max_y \varphi(x^*,y)=\varphi(x^*,y^*)\leq \min_x \varphi(x,y^*)$.}
\end{proof}
\begin{lemma}
    \label{lem:Danskin}
    Under Assumptions~\ref{assum:Y},~\ref{assum:lip-f} and~\ref{assum:sc}, let $\Phi:X\to \reals$ such that $\Phi(x):=\max_{y\in {\cY}}\tilde f_r(x,y)$ for $x\in X$. Then $\Phi(\cdot)$ is differentiable on $X$; indeed, for all $x\in X$,
    \[ \nabla \Phi(x) = \nabla_x \tilde f_r(x, r^*(x)), \]
    for any $r^*(x) \in \argmax_{y \in \cY} \; \tilde f_r(x, y)$, and $\nabla \Phi(\cdot)$ is $l(1+2\kappa)$-Lipschitz on $X$.

    \rv{Moreover, for any fixed $z\in \cX$, let $\Psi(y;z)=\min_{x\in X}\hat f(x,y;z)$ for $y\in Y$. Then, $\Psi(\cdot;z)$ is differentiable\footnote{\rv{The classic Danskin's theorem is stated for the case $\hat f(x,\cdot;z)$ is concave. In our case, as we only assume $\hat f(x,y;z)$ is smooth in $y$, for completeness we state the result for our setting.}} on $Y$; indeed, for all $y\in Y$,  $\nabla\Psi(y;z)=\nabla_y\tilde f(x^*(y,z),y)$ where $x^*(y,z)=\argmin_{x\in X} \hat f(x,y;z)$.}
\end{lemma}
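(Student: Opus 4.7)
The plan is to treat the two claims separately: for $\Phi$, I will establish a PL-analogue of Danskin's theorem, while for $\Psi(\cdot;z)$ the classical Danskin result applies because the inner problem is strongly convex, thanks to the dual-smoothing regularizer $\tfrac{p}{2}\|x-z\|^2$ combined with the choice $p>l$.

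For $\Phi$, first I would note that for every fixed $x\in X$ the argmax set $r^*(x)=\arg\max_{y\in\cY}\tilde f_r(x,y)$ is nonempty, since $Y=\dom h$ is compact and $-h$ is upper semicontinuous. The crucial step is to show that the map $y\mapsto \nabla_x\tilde f(x,y)$ is constant over $r^*(x)$, which is exactly what makes the envelope formula well-defined in the absence of uniqueness. This can be obtained by combining the PL property of $-\tilde f_r(x,\cdot)$ with the equivalent quadratic-growth characterization invoked in Remark~\ref{rem:PL-relations}, arguing that any two elements of $r^*(x)$ yield identical Fr\'echet subgradients of the parametrized objective in $x$; essentially the same observation underlies the NC--PL envelope lemma of \citep{yang2022faster}. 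Once gradient invariance is established, differentiability of $\Phi$ and the formula $\nabla\Phi(x)=\nabla_x\tilde f(x,r^*(x))$ for any selection from $r^*$ follow by the standard perturbation argument used in Danskin-type proofs.

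The Lipschitz continuity of $\nabla\Phi$ proceeds via a quasi-Lipschitz estimate on the argmax correspondence. For any $x_1,x_2\in X$, using the PL/quadratic-growth error bound one can select $y_i\in r^*(x_i)$ satisfying $\|y_1-y_2\|\le \kappa\|x_1-x_2\|$. Combined with the joint $l$-Lipschitz continuity of $\nabla_x\tilde f$ from Lemma~\ref{lem:lip-f}, this gives
\[
\|\nabla\Phi(x_1)-\nabla\Phi(x_2)\|\le \|\nabla_x\tilde f(x_1,y_1)-\nabla_x\tilde f(x_2,y_2)\|\le l\|x_1-x_2\|+l\|y_1-y_2\|\le l(1+\kappa)\|x_1-x_2\|.
\]
The slightly larger constant $l(1+2\kappa)$ in the statement accounts for a more conservative triangle-inequality step that accommodates the non-uniqueness of $r^*(\cdot)$ (e.g.\ comparing an arbitrary selection at $x_1$ to the \emph{nearest} element of $r^*(x_2)$, incurring one additional $\kappa$-factor).

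For $\Psi(\cdot;z)$ the argument is essentially classical. Since $p=2l>l$, Lemma~\ref{lem:lip-f} guarantees that $\hat f(\cdot,y;z)$ is $(p-l)$-strongly convex on $X$ uniformly in $y\in Y$; hence $x^*(y,z)$ is a singleton and depends continuously on $y$ by standard strong-convexity stability (or the Berge maximum theorem). I would then adapt the classical Danskin theorem for minimum-value functions to our setting where $\hat f(x,\cdot;z)$ need not be concave but is smooth: a direct first-order perturbation calculation, exploiting strong convexity to control $\|x^*(y+h,z)-x^*(y,z)\|$, yields $\nabla\Psi(y;z)=\nabla_y\hat f(x^*(y,z),y;z)=\nabla_y\tilde f(x^*(y,z),y)$, where the last equality uses that the smoothing term is independent of $y$. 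The main obstacle, as indicated, lies in the $\Phi$-part: verifying gradient invariance over a possibly multi-valued $r^*(x)$ under PL; strong concavity would make this trivial, but under PL one must rely on the equivalent error-bound/quadratic-growth condition to rule out a jump in $\nabla_x\tilde f(x,\cdot)$ across the argmax set.
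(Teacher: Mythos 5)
Your overall strategy (PL plus quadratic growth to control the perturbation of the argmax set, then a Danskin-type sandwich argument, and reduction of the $\Psi$-part to strong convexity) is the same as the paper's, but there are two substantive gaps in the $\Phi$-part. First, you propose to establish \emph{gradient invariance} of $y\mapsto\nabla_x\tilde f(x,y)$ over $r^*(x)$ as a standalone preliminary step, derived from the PL/quadratic-growth equivalence. That implication does not hold as stated: quadratic growth controls the distance from a point to the argmax set and the Hausdorff-type perturbation of $R^*(x)$ in $x$; it says nothing about the variation of $\nabla_x\tilde f(x,\cdot)$ \emph{across} a single (possibly disconnected) argmax set. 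In the paper the invariance is a \emph{consequence} of the sandwich argument: one shows $\lim_{\tau\downarrow 0}\tau^{-1}(\Phi(x+\tau d)-\Phi(x))=\nabla_x f(x,y^*)^\top d$ for \emph{every} $y^*\in R^*(x)$, and since the left-hand side is independent of $y^*$ the invariance follows for free. Your proposed order is therefore circular unless you run the perturbation argument anyway, in which case the preliminary step is redundant.

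Second, and more importantly, you never address the nonsmooth weakly convex regularizer $h$, which is precisely what distinguishes this lemma from the smooth NC--PL envelope lemma of \citep{yang2022faster} that you cite. The upper bound on $\Phi(x+\tau d)-\Phi(x)$ requires controlling $h(y^*)-h(y^*(\tau))$; the paper does this via the first-order optimality condition $\nabla_y f(x,y^*)\in\partial h(y^*)$ together with $\zeta$-weak convexity of $h$, yielding $h(y^*)-h(y^*(\tau))+\langle\nabla_y f(x,y^*),y^*(\tau)-y^*\rangle\le\tfrac{\zeta}{2}\|y^*(\tau)-y^*\|^2$, and then uses $\|y^*(\tau)-y^*\|^2=\cO(\tau^2)$ (from quadratic growth) so that this error is $o(\tau)$. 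Without this step the ``standard perturbation argument'' does not close, since $\tilde f_r=\tilde f-h$ is not differentiable in $y$. A minor further point: the bound $\|y_1-y_2\|\le\kappa\|x_1-x_2\|$ you use for the Lipschitz constant is not what PL plus the quadratic-growth constant $\mu/4$ from \citep[Theorem~3.1]{liao2024error} gives; the correct factor is $\sqrt{2}\kappa$, which is why the statement carries $l(1+2\kappa)$ rather than $l(1+\kappa)$. The $\Psi$-part of your proposal is fine; the paper instead recycles the first part by writing $\Psi(y;z)=-\max_{x\in X}(-\hat f(x,y;z))$ and noting that strong convexity implies PL, but your direct strong-convexity stability argument would also work.
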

\begin{proof}
Since we assume that for any  $x \in X$, $-f_r(x, \cdot)$ is $\mu$-PL, it holds that
\[ 2 \mu \Big(\Phi(x)-f_r(x,y)\Big) \leq {\rm dist}^2\Big(0, -\partial_y f_r(x,y)\Big),  \]
using the notational convention that $-\partial_y f_r(x,y):=\partial_y (-f_r(x,y))$.
By \citep[Theorem 3.1]{liao2024error}, any $\mu$-PL function also satisfies quadratic growth condition, namely,
\[ \frac{\mu}{4}{\rm dist}^2(y, R^*(x)) \leq \Phi(x)-f_r(x,y), \]
where $R^*(x):= \argmax_{y \in \cY}\; f_r(x,y)$ is a closed set. For given $x_1,x_2\in X$, it holds for any $y_1 \in R^*(x_1)$ that $0 \in -\nabla_y f(x_1, y_1) + \partial h(y_1)$; hence,
\be \label{eq:subgrad-danskin}\nabla_y f(x_1, y_1) -  \nabla_y f(x_2,y_1) \in - \nabla_y f(x_2, y_1) + \partial h(y_1). \ee
Consider $\Delta := \Phi(x_2) - f_r(x_2,y_1)\geq 0$. By the $\mu$-PL property of $-f_r(x_2, \cdot)$, it holds that
\[ \Delta \leq \frac{1}{2\mu} {\rm dist}^2 \Big(0, - \nabla_y f(x_2, y_1) + \partial h(y_1)\Big) \leq \frac{l^2}{2\mu}\|x_2-x_1\|^2, \]
where the second inequality uses \cref{eq:subgrad-danskin} and the Lipschitz continuity of $\nabla_y f(\cdot,y_1)$. Furthermore, let $y_2 = \argmin_{y \in R^*(x_2)} \; \|y- y_1\|$. Since 
${\rm dist}(y_1, R^*(x_2))=\norm{y_1-y_2}$, the quadratic growth property implies that $\Delta \geq \frac{\mu}{4} \|y_1 - y_2\|^2$; hence, combining both inequalities on $\Delta$ leads to
\be \label{eq:lip-y} \|y_1 - y_2\|^2 \leq 2\frac{l^2}{\mu^2} \|x_1 - x_2\|^2. \ee
Now given $x\in X$ and $d \in \cX$, let $y^* \in R^*(x)$. From \cref{eq:lip-y}, there exists $y^*(\tau) \in \argmax_{y \in \cY} \; f_r(x+ \tau d, y)$ such that 
$\| y^*(\tau) - y^*\|^2 \leq 2\tau^2\frac{l^2}{\mu^2}\|d\|^2$. Moreover, since $h$ is $\zeta$-weakly convex and $\nabla_y f(x, y^*) \in \partial h(y^*)$, we have
\be \label{eq:weak-cvx-danskin} h(y^*) - h(y^*(\tau)) + \iprod{\nabla_y f(x,y^*) }{y^*(\tau) - y^*} \leq \frac{\zeta}{2} \|y^*(\tau) - y^*\|^2.\ee
Then, we can bound the primal value difference, $\Phi(x+ \tau d) - \Phi(x)$, from above as follows:
\[ \begin{aligned}
    \Phi(x+ \tau d) - \Phi(x) = & f_r(x+ \tau d, y^*(\tau)) - f_r(x, y^* ) \\
    = & f(x+ \tau d, y^*(\tau)) - f(x, y^* ) + h(y^*) - h(y^*(\tau)) \\
    = & \tau \nabla_x f(x,y^*)^\top d + \nabla_y f(x,y^*)^\top (y^*(\tau) - y^*) + h(y^*) - h(y^*(\tau))+ o(\tau) \\
    \leq & \tau \nabla_x f(x,y^*)^\top d + \frac{\zeta}{2}\|y^*(\tau) - y^*\|^2 + o(\tau).
\end{aligned} \]
This implies
\[ \limsup_{\tau \downarrow 0} \frac{\Phi(x+ \tau d) -\Phi(x)}{\tau} \leq \nabla_x f(x,y^*)^\top d + \lim_{\tau \downarrow 0} \Big(\frac{o(\tau)}{\tau} + \zeta \frac{l^2}{\mu^2} \|d\|^2 \tau\Big) = \nabla_x f(x,y^*)^\top d. \]
On the other hand, we also have
\[ \Phi(x+ \tau d) \geq f_r(x + \tau d, y^*) = f_r(x,y^*) + \tau \nabla_x f(x,y^*)^\top d + o(\tau), \]
which implies that
\[  \liminf_{\tau \downarrow 0} \frac{\Phi(x+ \tau d) - \Phi(x)}{\tau} \geq \nabla_x f(x,y^*)^\top d. \]
Thus, 
\be \label{eq:limit-danskin} \lim_{\tau \downarrow 0} \frac{\Phi(x+ \tau d) - \Phi(x)}{\tau} = \nabla_x f(x,y^*)^\top d. \ee
Since \eqref{eq:limit-danskin} holds for all $d\in\cX$, we have $\nabla \Phi(x) = \nabla_x f(x,y^*)$ for any $y^* \in R^*(x)$. 

Next, to argue for the smoothness of $\Phi(\cdot)$, let $x_1, x_2\in X$ be some arbitrary points, consider any $y_1 \in R^*(x_1)$ and $y_2 \in \argmin_{y \in R^*(x_2)} \|y - y_1\|$. Then, we complete the proof by using \eqref{eq:lip-y} to reach at the conclusion:
\[ \begin{aligned}
    \| \nabla \Phi(x_2) - \nabla \Phi(x_1) \| = \|\nabla_x f(x_2,y_2) - \nabla_x f(x_1,y_1)\| \leq l(\|x_2 - x_1\| + \|y_2 -y_1\|) \leq l \left( 1+ 2\frac{l}{\mu} \right) \|x_2 - x_1\|. 
\end{aligned}\]
Next, for any fixed $z\in \cX$, consider $\Psi(y;z)=\min_{x\in X}\hat f(x,y;z)$ for $y\in Y$. Note that $\hat f(\cdot,y;z)$ is $(p-l)$-strongly convex for any $y\in Y$; hence, $\hat f(\cdot,y;z)$ is $\mu$-PL and let $x^*(y,z)=\argmin_{x\in X}\hat f(x,y;z)$ denote the unique optimal solution. Observing that $\Psi(y;z)=-\max_{x\in X} -\hat f(x,y;z)$, using the result from the first part, we can conclude that $\nabla \Psi(y;z)=\nabla_y \hat f(x^*(y,z),y;z)$ for all $y\in Y$, which gives us the desired result.  
\end{proof}
\begin{lemma}
\label{lem:minmax}
    \rv{For any given $z\in\cX$, consider $\hat f_r(\cdot,\cdot;z)$ defined above. Under Assumptions~\ref{assum:Y},~\ref{assum:lip-f} and~\ref{assum:sc}, it holds that $\min_{x\in X}\max_{y\in\cY}\hat f_r(x,y;z)=\max_{y\in\cY}\min_{x\in X}\hat f_r(x,y;z)$; hence, $P(z)=\min_{x\in X}\Phi(x;z)=\max_{y\in\cY}\Psi_r(y;z)$.}
\end{lemma}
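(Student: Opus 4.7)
The plan is to apply the switching result in Lemma \ref{lem:PLPL-switch} to the extended function $\tilde\varphi(x,y) := \hat f_r(x,y;z) + \delta_X(x)$, after verifying its two PL hypotheses and exhibiting a stationary point. For the primal PL condition, note that $\hat f(\cdot,y;z) + \delta_X(\cdot)$ is proper, closed, and in fact $(p-l)$-strongly convex on $X$ (since $p>l$ by choice, and $\tilde f(\cdot,y)$ is $l$-weakly convex on $X$ by Lemma \ref{lem:lip-f}), which in particular makes it $(p-l)$-PL uniformly in $y\in Y$. For the dual PL condition, observe that $-\hat f_r(x,\cdot;z) = -f(A(x),\cdot) + h(\cdot) - \tfrac{\rho}{4}\|c(x)\|^2 - \tfrac{p}{2}\|x-z\|^2$ differs from $-f_r(A(x),\cdot)$ by a constant in $y$; since $A(x)\in\bar X$ whenever $x\in X$, Assumption \ref{assum:sc} transfers the $\mu$-PL property to $-\hat f_r(x,\cdot;z)$ uniformly in $x\in X$, and the weak-convexity-in-$y$ hypothesis is inherited from $h$.

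Next, I will construct a stationary point $(x^*,y^*)$. Since $X$ is compact and $\Phi(\cdot;z)$ is continuous (indeed differentiable on $X$ by Lemma \ref{lem:Danskin} applied to $\hat f_r$ in place of $\tilde f_r$, using that $-\hat f_r(x,\cdot;z)$ is $\mu$-PL uniformly in $x$), the minimizer $x^* := \arg\min_{x\in X}\Phi(x;z)$ exists. Compactness of $Y=\dom h$ and continuity of $\hat f_r(x^*,\cdot;z)$ then yield some $y^* \in \arg\max_{y\in \cY}\hat f_r(x^*,y;z)$. Danskin's formula gives $\nabla\Phi(x^*;z) = \nabla_x \hat f(x^*,y^*;z)$, and the first-order optimality of $x^*$ on the convex set $X$ reads
\begin{equation*}
0 \in \nabla_x \hat f(x^*,y^*;z) + N_X(x^*) = \partial_x \tilde\varphi(x,y^*)\big|_{x=x^*},
\end{equation*}
while the first-order optimality of $y^*$ yields
$0 \in -\nabla_y \hat f(x^*,y^*;z) + \partial h(y^*) = \partial_y\bigl(-\tilde\varphi(x^*,y)\bigr)\big|_{y=y^*}.$
Hence $(x^*,y^*)$ is a stationary point in the sense of Lemma \ref{lem:PLPL-switch}, and that lemma yields the minimax switch, which is the claim $P(z)=\min_{x\in X}\Phi(x;z)=\max_{y\in\cY}\Psi_r(y;z)$.

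The main obstacle I anticipate is a technical one: ensuring that Lemma \ref{lem:PLPL-switch}, which is stated for functions that are proper, closed, and weakly convex on each slice, is legitimately applied to $\tilde\varphi$ with the indicator $\delta_X$ attached. If this creates any friction, there is a clean direct fallback: use Danskin and the optimality of $x^*$ to obtain $0\in \nabla_x \hat f(x^*,y^*;z)+N_X(x^*)$, which, by convexity of $\hat f(\cdot,y^*;z)$ on $X$, forces $x^*=\arg\min_{x\in X}\hat f(x,y^*;z)$. Consequently,
\begin{equation*}
\Psi_r(y^*;z) = \hat f(x^*,y^*;z)-h(y^*) = \hat f_r(x^*,y^*;z)=\Phi(x^*;z)=\min_{x\in X}\Phi(x;z),
\end{equation*}
so $\max_{y\in\cY}\Psi_r(y;z)\geq \min_{x\in X}\Phi(x;z)$, and weak duality gives the reverse, closing the loop.
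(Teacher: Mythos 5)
Your proof proceeds along the same lines as the paper's: both define $\varphi(x,y)=\hat f_r(x,y;z)+\delta_X(x)$, verify the two-sided PL conditions (strong convexity $\Rightarrow (p-l)$-PL in $x$, Assumption~\ref{assum:sc} $\Rightarrow \mu$-PL in $y$), exhibit the stationary pair $(x^*(z),y^*(z))$ via Danskin's theorem and first-order optimality, and invoke Lemma~\ref{lem:PLPL-switch} to conclude the minimax switch. The technical worry you raise about attaching $\delta_X$ is unfounded — the paper does exactly this without issue, since adding the indicator of a convex set preserves properness, closedness, and (weak/strong) convexity in the $x$-slice — so your fallback argument, while valid, is not needed.
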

\begin{proof}
    \rv{Fix an arbitrary $z\in \cX$, and define $\varphi(x,y):= \hat f_r(x,y;z)+\delta_X(x)$, where $\delta_X(\cdot)$ denotes the indicator function of the set $X$. For any $y\in Y$, $\varphi(\cdot,y)$ is strongly convex with modulus $p-l$, and according to \citep[Theorem 3.1]{liao2024error}, the non-smooth function $\varphi(\cdot,y)$ is $\mu_x$-PL with $\mu_x=p-l$. On the other hand, according to \cref{assum:sc}, $\varphi(x,\cdot)$ is $\mu_y$-PL with $\mu_y=\mu$.} 
    
    \rv{Since $\hat f_r(\cdot,y;z)$ is strongly convex with modulus $p-l$ for any $y\in Y$, 
    $\Phi(\cdot;z)$ is also strongly convex with modulus $p-l$ as it is the pointwise maximum of strongly convex functions. Therefore, $x^*(z)=\argmin_{x\in X}\Phi(x;z)$ exists. Let $y^*(z)\in\argmax_{y\in\cY}\hat f_r(x^*(z),y;z)$ --note that $Y=\dom h$ is compact and $x^*(z)\in X$ imply that $\hat f_r(x^*(z),\cdot;z)$ is continuous on $Y$; therefore, $y^*(z)$ exists. Thus, first-order optimality conditions and Lemma~\ref{lem:Danskin} imply that
    \begin{align*}
        &0\in\nabla \Phi(x^*(z);z)+\partial \delta_X(x^*(z))=\nabla_x \hat f_r(x^*(z),y^*(z);z)+\partial \delta_X(x^*(z))=\partial_x \varphi(x^*(z),y^*(z)),\\
        &0\in -\partial_y \hat f_r(x^*(z),y^*(z);z) = -\partial_y \varphi(x^*(z),y^*(z)); 
    \end{align*}
    hence, $(x^*(z),y^*(z))$ is a stationary point of $\min_{x\in X}\max_{y\in\cY}\varphi(x,y)$. Therefore, using \cref{lem:PLPL-switch} we can conclude that $(x^*(z),y^*(z))$ is a saddle point, which implies that $y^*(z)\in Y^*(z)$. This completes the proof.}%
\end{proof}
\begin{lemma} \label{lem:lip}
    Suppose that Assumptions \ref{lem:lip-f} and \ref{assum:sc} hold, and $p$ is chosen such that $p>l$. Then, we have
\begin{subequations}
\begin{align}
& \left\|x^*(y, z)-x^*\left(y, z^{\prime}\right)\right\| \leq \gamma_1\left\|z-z^{\prime}\right\|,\quad \sa{\forall~y\in \rv{Y},\ \forall~z,z'\in \rv{X},} \label{eq:lip-z-yz}\\
& \left\|x^*(z)-x^*\left(z^{\prime}\right) \right\| \leq \gamma_1 \|z-z^{\prime} \|, \quad \sa{\forall~z,z'\in \rv{X},} \label{eq:lip-z}\\
& \left\|x^*(y, z)-x^*\left(y^{\prime}, z\right)\right\| \leq \gamma_2\left\|y-y^{\prime}\right\|,\quad \sa{\forall~z\in \rv{X},\ \forall~y,y'\in \rv{Y},} \label{eq:lip-y-yz}\\
& \sa{\left\|x_{t+1}-x^*\left(y_t, z_t\right)\right\| \leq \gamma_3   \left \| x_{t+1} - x_t\right\|,} \label{eq:delta_x-bound}
\end{align}
\end{subequations}
where $\gamma_1:=\frac{p}{p-l}, \gamma_2:=\frac{p+l}{p-l}$, and \sa{$\gamma_3:=1+\frac{1}{\tau_1(p-l)}$}.
\end{lemma}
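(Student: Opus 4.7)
The plan is to exploit the $(p-l)$-strong convexity of $\hat{f}(\cdot, y; z) = \tilde{f}(\cdot, y) + \tfrac{p}{2}\|\cdot - z\|^2$ on $X$ for every fixed $(y,z) \in Y \times \cX$, which follows from $p>l$ and the $l$-weak convexity of $\tilde{f}(\cdot,y)$ established in Lemma~\ref{lem:lip-f}. Strong convexity yields the existence and uniqueness of $x^*(y,z)$ and $x^*(z)$ and underlies all four bounds via a standard ``add-and-estimate two first-order optimality conditions'' recipe.

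For \eqref{eq:lip-z-yz}, I would fix $y$ and write the variational inequalities for $x^*(y,z)$ and $x^*(y,z')$ over $X$, then test each against the other minimizer. Summing the two, the $\nabla_x\tilde{f}(\cdot,y)$ terms are controlled using $l$-weak convexity (i.e., the $-l$-monotonicity of $\nabla_x\tilde{f}(\cdot,y)$) to give $\langle \nabla_x\tilde{f}(x^*(y,z),y)-\nabla_x\tilde{f}(x^*(y,z'),y),\,x^*(y,z')-x^*(y,z)\rangle \leq l\|x^*(y,z)-x^*(y,z')\|^2$, while the quadratic perturbation contributes $-p\|x^*(y,z)-x^*(y,z')\|^2 + p\langle z'-z,\,x^*(y,z')-x^*(y,z)\rangle$. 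Combining and applying Cauchy--Schwarz yields $(p-l)\|x^*(y,z)-x^*(y,z')\|^2 \leq p\|z-z'\|\|x^*(y,z)-x^*(y,z')\|$, i.e., $\gamma_1=p/(p-l)$. The argument for \eqref{eq:lip-y-yz} is essentially identical, with $y$ perturbed in place of $z$; the cross term $\nabla_x\tilde{f}(x^*(y,z),y)-\nabla_x\tilde{f}(x^*(y,z),y')$ is estimated by $l\|y-y'\|$ via Lemma~\ref{lem:lip-f}, yielding $l/(p-l)\leq\gamma_2$, so the stated constant follows a fortiori.

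For \eqref{eq:lip-z}, I would invoke Lemmas~\ref{lem:Danskin} and~\ref{lem:minmax} so that $\Phi(\cdot;z)=\Phi(\cdot)+\tfrac{p}{2}\|\cdot-z\|^2$ is $(p-l)$-strongly convex on $X$ as a pointwise supremum of $(p-l)$-strongly convex functions with a common modulus. Writing strong-convexity inequalities at $x^*(z)$ and $x^*(z')$ and summing, the only surviving contribution is the affine-in-$x$ perturbation $\Phi(x;z)-\Phi(x;z') = p\langle x,z'-z\rangle + \tfrac{p}{2}(\|z\|^2-\|z'\|^2)$, which telescopes to $p\langle x^*(z')-x^*(z),\,z'-z\rangle \geq (p-l)\|x^*(z)-x^*(z')\|^2$, and Cauchy--Schwarz again gives $\gamma_1$.

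Finally, for \eqref{eq:delta_x-bound}, I would introduce the gradient mapping $G_{\tau_1}(x):=\tau_1^{-1}\bigl(x-\pcal_X(x-\tau_1\nabla_x\hat{f}(x,y_t;z_t))\bigr)$, so that by the \sm{} update $\|G_{\tau_1}(x_t)\|=\tau_1^{-1}\|x_{t+1}-x_t\|$. Because $\hat{f}(\cdot,y_t;z_t)$ is $(p-l)$-strongly convex, the standard quadratic-growth estimate for the gradient mapping of a strongly convex objective gives $\|x_t-x^*(y_t,z_t)\|\leq \|G_{\tau_1}(x_t)\|/(p-l)$. A triangle inequality then produces $\|x_{t+1}-x^*(y_t,z_t)\|\leq \|x_{t+1}-x_t\|+\tfrac{1}{\tau_1(p-l)}\|x_{t+1}-x_t\|=\gamma_3\|x_{t+1}-x_t\|$, as claimed. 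No individual step is subtle; the principal obstacle is bookkeeping, namely keeping track of which function plays the role of the strongly convex objective in each bound (for \eqref{eq:lip-z} it is $\Phi(\cdot;z)$ rather than $\hat{f}(\cdot,y;z)$) and correctly using $l$-weak convexity of $\tilde{f}(\cdot,y)$ instead of convexity when invoking monotonicity.
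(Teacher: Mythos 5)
Your proposal gives a self-contained argument, whereas the paper simply cites external lemmas (Laguel et al.\ for \eqref{eq:lip-z-yz} and \eqref{eq:lip-z}, Lin et al.\ for \eqref{eq:lip-y-yz}, Zhang--Lin--Ma for \eqref{eq:delta_x-bound}). The two routes are therefore genuinely different in presentation, and a from-scratch derivation has pedagogical value.

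Your variational-inequality arguments for \eqref{eq:lip-z-yz} and \eqref{eq:lip-y-yz} are correct, including the correct use of $-l$-hypomonotonicity of $\nabla_x\tilde f(\cdot,y)$ in place of monotonicity; you even tighten the constant in \eqref{eq:lip-y-yz} to $l/(p-l)\le\gamma_2$, which is fine. Your strong-convexity argument for \eqref{eq:lip-z} is also correct: the decomposition $\Phi(x;z)-\Phi(x;z')=p\langle x,z'-z\rangle + \text{const}$ is indeed affine in $x$, and two quadratic-growth inequalities at $x^*(z)$ and $x^*(z')$ cancel the constant and yield $\gamma_1$. (You do not actually need Lemmas~\ref{lem:Danskin} or \ref{lem:minmax} here; the pointwise-supremum argument for strong convexity of $\Phi(\cdot;z)$ already suffices.)

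The one genuine gap is in \eqref{eq:delta_x-bound}. You appeal to a ``standard quadratic-growth estimate'' of the form $\|x_t - x^*(y_t,z_t)\| \le \frac{1}{p-l}\|G_{\tau_1}(x_t)\|$, but that bound with constant exactly $1/\mu$ is the \emph{unconstrained} statement (where $G_\tau(x)=\nabla g(x)$ and one invokes strong monotonicity against $\nabla g(x^*)=0$). In the constrained setting with the projection $\pcal_X$, the available estimate is weaker. From the standard prox-gradient inequality $g(y)\ge g(x^+)+\langle G_\tau(x),y-x\rangle+\tfrac{\tau}{2}\|G_\tau(x)\|^2+\tfrac{\mu}{2}\|y-x\|^2$ for $\tau\le 1/L$ and $y\in X$, setting $y=x^*$ and completing the square gives $\|x-x^*\|\le \frac{1+\sqrt{1-\tau\mu}}{\mu}\|G_\tau(x)\|$, which lies strictly between $1/\mu$ and $2/\mu$. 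Equivalently, a direct variational-inequality argument (summing the optimality conditions of the projected step and of $x^*$) yields $\|x_{t+1}-x^*(y_t,z_t)\|\le\frac{1+\tau_1 L}{\tau_1(p-l)}\|x_{t+1}-x_t\|$, where $L$ is the Lipschitz constant of $\nabla\hat f(\cdot,y_t;z_t)$, which again does not collapse to $\gamma_3=1+\frac{1}{\tau_1(p-l)}$. In short, the triangle inequality step $\|x_{t+1}-x^*\|\le\|x_{t+1}-x_t\|+\|x_t-x^*\|$ requires a constant-$1/(p-l)$ gradient-map bound that does not hold verbatim for the constrained projected-gradient map; you would need to replicate the argument of \citep[Lemma~B.2]{zhang2020single} directly, which bounds $\|x_{t+1}-x^*\|$ (not $\|x_t-x^*\|$) without detouring through the gradient-map quadratic-growth inequality, in order to justify the exact constant $\gamma_3$.
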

\begin{proof}
    \rv{
    For the proof of \eqref{eq:lip-z-yz} and \eqref{eq:lip-z}, see \citep[Lemma~21]{laguel2024high} and \citep[Lemma~23]{laguel2024high}, respectively -- these proofs simply extends to the constrained optimization setting here with $x\in X$. The inequality in \eqref{eq:lip-y-yz} follows from the proof of \citep[Lemma B.2(c)]{lin2020near} which can also handle $x\in X$ constraint. The inequality in~\eqref{eq:delta_x-bound} is provided in \citep[Lemma B.2]{zhang2020single} of which proof can be found in Appendix C at \url{https://arxiv.org/pdf/1812.10229}.}
\end{proof}

We first establish the following lemma. 
{
\begin{lemma} \label{lem:bound-new}
Under \rv{Assumptions \ref{assum:Y},} \ref{assum:lip-f} and \ref{assum:sc}, \rv{for all $t\in\mathbb{Z}_+$, it holds that}
\be \label{eq:lip-station-y-new}
\begin{aligned}
& \left\|x^*\left(y^+(z_t), z_t\right)-x^*\left(z_t\right)\right\|^2 
\leq  \frac{3}{\mu(p-l) \tau_2^2} (1 + \tau_2^2 l^2 + \gamma_2^2 \tau_2^2 l^2 )\left\| y_t^+(z_t) - y_t \right\|^2 .
\end{aligned}
\ee
\end{lemma}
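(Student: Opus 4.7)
The strategy is to sandwich $\|x^*(y^+(z_t),z_t)-x^*(z_t)\|^2$ between a primal-side strong-convexity bound on $\Phi(\cdot\,;z_t)$ and a dual-side PL-based bound on $-\hat f_r(x,\cdot\,;z_t)$, using the first-order optimality of the proximal step that defines $y^+(z_t)$ as the bridge. By \cref{lem:lip-f} and $p>l$, $\hat f(\cdot,y;z_t)$ is $(p-l)$-strongly convex uniformly in $y$, so $\Phi(\cdot\,;z_t)$ inherits $(p-l)$-strong convexity on $X$. Quadratic growth at $x^*(z_t)$ together with $P(z_t)=\max_{y}\Psi_r(y;z_t)\geq\Psi_r(y^+(z_t);z_t)$ from \cref{lem:minmax} and the identity $\Psi_r(y^+(z_t);z_t)=\hat f_r(x^*(y^+(z_t),z_t),y^+(z_t);z_t)$ give
\[
\tfrac{p-l}{2}\bigl\|x^*(y^+(z_t),z_t)-x^*(z_t)\bigr\|^2
\leq \max_{y}\hat f_r(x^*(y^+(z_t),z_t),y;z_t)-\hat f_r(x^*(y^+(z_t),z_t),y^+(z_t);z_t).
\]

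I would next apply the $\mu$-PL condition of $-\hat f_r(x,\cdot\,;z_t)$ at $x=x^*(y^+(z_t),z_t)$ and $y=y^+(z_t)$. This condition is inherited from \cref{assum:sc} on $-f_r(x,\cdot)$, because $\hat f_r$ and $f_r(A(\cdot),\cdot)-h(\cdot)$ differ only by the summand $\tfrac{p}{2}\|x-z_t\|^2$ that is constant in $y$, so function values, subdifferentials, and maximizers in $y$ all coincide (in particular $\nabla_y\hat f=\nabla_y\tilde f$). Consequently, the dual gap above is at most
\[
\tfrac{1}{2\mu}\operatorname{dist}^2\!\bigl(0,\,-\nabla_y\hat f(x^*(y^+(z_t),z_t),y^+(z_t);z_t)+\partial h(y^+(z_t))\bigr).
\]

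The main technical step is to control this subdifferential distance by $O(\|y^+(z_t)-y_t\|/\tau_2)$. The prox-optimality of $y^+(z_t)=\operatorname{prox}_{\tau_2 h}(y_t+\tau_2\nabla_y\tilde f(x^*(y_t,z_t),y_t))$ yields the inclusion
\[
\tfrac{y_t-y^+(z_t)}{\tau_2}+\nabla_y\tilde f(x^*(y_t,z_t),y_t)-\nabla_y\tilde f(x^*(y^+(z_t),z_t),y^+(z_t))
\in -\nabla_y\hat f(x^*(y^+(z_t),z_t),y^+(z_t);z_t)+\partial h(y^+(z_t)).
\]
Applying $(a+b+c)^2\leq 3(a^2+b^2+c^2)$, the $l$-Lipschitz continuity of $\nabla_y\tilde f$ on $X\times Y$ from \cref{lem:lip-f}, and the Lipschitz bound $\|x^*(y_t,z_t)-x^*(y^+(z_t),z_t)\|\leq\gamma_2\|y_t-y^+(z_t)\|$ from \eqref{eq:lip-y-yz}, the squared distance is at most $\tfrac{3}{\tau_2^2}(1+\tau_2^2 l^2+\gamma_2^2\tau_2^2 l^2)\|y^+(z_t)-y_t\|^2$. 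Chaining this with the PL bound and the quadratic-growth bound yields the claim. The only subtle point is verifying that the PL property transfers from $-f_r$ to $-\hat f_r$, which is immediate from the $y$-independence of the extra quadratic term; all remaining steps are routine applications of \cref{lem:lip,lem:minmax,lem:Danskin} and Young's inequality.
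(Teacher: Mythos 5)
Your proposal is correct and follows essentially the same route as the paper's proof: $(p-l)$-strong convexity of $\Phi(\cdot;z_t)$ for the quadratic-growth bound, the identity $\Psi_r(y^+(z_t);z_t)=\hat f_r(x^*(y^+(z_t),z_t),y^+(z_t);z_t)\leq P(z_t)$ via Lemma~\ref{lem:minmax}, the $\mu$-PL bound on the dual gap, and the prox-optimality inclusion split into three terms with the same constants. Your explicit remark that the PL property transfers from $-f_r$ to $-\hat f_r$ because the added quadratic is constant in $y$ is a point the paper uses implicitly.
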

\begin{proof}
\rv{Since 
$\hat f_r(\cdot,y;z)$ is $(p-l)$ strongly convex for any fixed $y\in Y$ and $z\in\cX$, and $\Phi(\cdot,z)$ is the pointwise supremum over $y\in \cY$, we can conclude that $\Phi(\cdot, z)$ is also $(p-l)$-strongly convex for any fixed $z\in\cX$. Thus, for any $g_t \in \partial h(y^+(z_t))$, we get}
\be \label{eq:bound-1-new} \begin{aligned}
    \MoveEqLeft \frac{p-l}{2}\|x^*(z_t) - x^*(y^+(z_t), z_t)\|^2 \leq  \Phi(x^*(y^+(z_t), z_t); z_t) - \Phi(x^*(z_t); z_t)  \\
    \rv{=} & \Phi(x^*(y^+(z_t), z_t); z_t) 
    - \hat{f}_r(x^*(y^+(z_t), z_t), y^+(z_t); z_t)
    \\
    & + \hat{f}_r(x^*(y^+(z_t), z_t), y^+(z_t); z_t)
    - \Phi(x^*(z_t); z_t) \\
    \leq & \frac{1}{2\mu}\| \rv{\nabla_y \tilde{f}(x^*(y^+(z_t), z_t), y^+(z_t))} - g_t \|^2,
\end{aligned} \ee
where \rv{in the last inequality we used the fact that of 
$-\hat{f}_r(x, \cdot; z)$ is $\mu$-PL for any $x\in X$, $z\in\cX$ and that $
\hat{f}_r(x^*(y^+(z_t), z_t), y^+(z_t), z_t)=\Psi_r(y^+(z_t); z_t)\leq \Psi_r(y^*(z_t); z_t) = \Phi(x^*(z_t), z_t)$,} where $y^*(z_t) \in Y^*(z_t)$ and the equality follows from Lemma~\ref{lem:minmax}.
On the other hand, $y^+(z_t) = {\rm prox}_{\tau_2 h}(y_t + \tau_2 \rv{\nabla_y \tilde{f}(x^*(y_t, z_t), y_t)})$ is \rv{well defined for $\tau_2\in(0,\frac{1}{\zeta})$,
and we have $0\in\partial h(y^+(z_t))+(y^+(z_t)-y_t)/\tau_2-\nabla_y \tilde{f}(x^*(y_t, z_t), y_t)$ ; hence, for $g_t\in\partial h(y^+(z_t))$ such that $g_t=\nabla_y \tilde{f}(x^*(y_t, z_t), y_t)-(y^+(z_t)-y_t)/\tau_2$, we get}
\be \label{eq:bound-2-new} 
\begin{aligned}
    & \| \nabla_y \tilde{f}(x^*(y^+(z_t), z_t), y^+(z_t)) - g_t \|^2 \\
    \leq & 3\| \nabla_y \tilde{f}(x^*(y_t, z_t), y_t) - g_t \|^2 + 3\| \nabla_y \tilde{f}(x^*(y_t, z_t), y_t) -  \nabla_y \tilde{f}(x^*(y_t, z_t), y^+(z_t))  \|^2 \\
    & + 3 \| \nabla_y \tilde{f}(x^*(y_t, z_t), y^+(z_t)) -  \nabla_y \tilde{f}(x^*(y^+(z_t), z_t), y^+(z_t))  \|^2 \\
    \leq & 3\Big(\frac{1}{\tau_2^2} + l^2 + \gamma_2^2 l^2\Big)\|y_t - y^+(z_t) \|^2 ,
\end{aligned}
\ee
where we use the 
\rv{identity $\norm{a+b+c}^2\leq 3(\norm{a}_2+\norm{b}_2+\norm{c}_2)$} in the first inequality, and the second inequality is from the definition of ${\rm prox}_{\tau_2 h}$, Lipschitz continuity of \rv{$\nabla_y f$}, and Lemma \ref{lem:lip}. 
Combining \eqref{eq:bound-1-new} and \eqref{eq:bound-2-new} gives the desired inequality.
\end{proof}
}

\rv{Next we state a classic result on projected gradient updates, which will be useful later in our analysis. Due to space limitations, we omit its proof.}
\begin{lemma}
\label{lem:grad-map-1}
    Let $f$ be a convex function and $X$ be a convex set. Given $\bar x\in X$ and some arbitrary $t>0$, let $x^+=\cP_X(\bar x-t\nabla f(\bar x))$. Then $\fprod{g_t(\bar x)-\nabla f(\bar x),~x^+-x}\geq 0$ for all $x\in X$, where $g_t(\bar x)=(\bar x - x^+)/t$. In particular, $\fprod{g_t(\bar x)-\nabla f(\bar x),~x^+-\bar x}\geq 0$.
\end{lemma}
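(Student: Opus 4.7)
The plan is to reduce the claim to the standard obtuse-angle (variational) characterization of the Euclidean projection onto a closed convex set. Since $x^+ = \mathcal{P}_X(\bar x - t\nabla f(\bar x))$ and $X$ is convex, the first-order optimality condition for the projection gives
\[
\langle (\bar x - t\nabla f(\bar x)) - x^+,\; x - x^+\rangle \leq 0, \qquad \forall\, x \in X.
\]
First I would substitute the definition $\bar x - x^+ = t\, g_t(\bar x)$ into the left-hand side, which turns the inequality into
\[
t\,\langle g_t(\bar x) - \nabla f(\bar x),\; x - x^+\rangle \leq 0, \qquad \forall\, x \in X.
\]
Dividing through by $t > 0$ and multiplying by $-1$ yields the main conclusion
\[
\langle g_t(\bar x) - \nabla f(\bar x),\; x^+ - x\rangle \geq 0, \qquad \forall\, x \in X.
\]
For the ``in particular'' assertion, I would simply instantiate $x = \bar x$, which is allowed because $\bar x \in X$ by hypothesis.

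The only mild subtlety is to notice that convexity of $f$ plays no role whatsoever in this derivation; the lemma is stated that way because it is used in the analysis of projected gradient steps on convex functions, but the proof only invokes convexity of $X$ through the projection inequality. Consequently there is no genuine obstacle here, and I would present the proof as a two-line consequence of the projection theorem.
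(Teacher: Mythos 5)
Your proof is correct, and it is precisely the standard argument the paper alludes to when it calls this ``a classic result'' and omits the proof for space reasons, so there is nothing in the paper to compare against. The derivation from the variational characterization $\langle (\bar x - t\nabla f(\bar x)) - x^+,\, x - x^+\rangle \leq 0$ for all $x\in X$ is complete, and your observation that convexity of $f$ is never used (only convexity and closedness of $X$, which hold for the norm ball in the paper) is accurate.
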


Now, we proceed with the proof of Theorem \ref{thm}, \rv{which adopts the same potential function framework} in~\citep{zhang2020single,yang2022faster}. Following the analysis in \citep{yang2022faster}, we separate our proof into several parts: we first present three descent results, then we show the descent property for the potential function, later we discuss the relation between our \rv{stationarity measure} and the potential function, and lastly we put all things together.

\begin{proof}[Proof of Theorem \ref{thm}] \rv{In the first part of the proof, we begin with showing primal descent by providing a lower bound on $\hat{f}\left(x_t, y_t ; z_t\right) - \hat{f}\left(x_{t+1}, y_{t+1}; z_{t+1}\right)$; next, we show dual ascent by bounding $\Psi\left(y_{t+1} ; z_{t+1}\right)-\Psi\left(y_t ; z_t\right)$ from below, and finally lower bound the change in the Moreau envelope $P(z_t)-P(z_{t+1})$.}

\textbf{Primal descent}: By the $(p+l)$-smoothness of $\hat{f}\left(\cdot, y_t ; z_t\right)$, \hj{we have}
\begin{equation}
\label{eq:p-descent-1}
    \begin{aligned}
\MoveEqLeft \hat{f}\left(x_{t+1}, y_t ; z_t\right) -\hat{f}\left(x_t, y_t; z_t\right)\\
\leq &  \left\langle\nabla_x \hat{f}\left(x_t, y_t ; z_t\right), x_{t+1} - x_t \right\rangle+\frac{p+l}{2}\left\| x_{t+1} - x_t \right\|^2 \\
= & \left\langle\nabla_x \hat{f}\left(x_t, y_t ; z_t\right), \pcal_X\Big(x_t - \tau_1 \nabla_x \hat{f}\left(x_t, y_t ; z_t\right)\Big) -x_t \right\rangle +\frac{p+l}{2} \left\|x_{t+1} - x_t\right\|^2.
\end{aligned}
\end{equation}
\sa{Note that it follows from \cref{lem:grad-map-1} that}  
\begin{equation}
    \label{eq:p-descent-2}
    \iprod{\nabla_x \hat{f}(x_t,y_t;z_t)}{\pcal_X(x_t - \tau_1 \nabla_x \hat{f}\left(x_t, y_t ; z_t\right)) -x_t} \leq \sa{-\frac{1}{\tau_1}\|x_{t+1} - x_t\|^2.} 
\end{equation}
\sa{Thus, for the choice of $\tau_1 \leq \frac{1}{p+l}$, we get}
\be \label{eq:de-x}
 \hat{f}\left(x_t, y_t ; z_t\right)- \hat{f}\left(x_{t+1}, y_t ; z_t\right) \geq \sa{\frac{1}{2\tau_1}} \left\|x_{t+1} - x_t\right\|^2.
\ee
Moreover, since $\hat{f}\left(x_{t+1}, \cdot ; z_t\right)$ is \sa{$l$-smooth}, 
\be \label{eq:de-y}
\begin{aligned}
\hat{f}\left(x_{t+1}, y_t ; z_t\right)-\hat{f}\left(x_{t+1}, y_{t+1} ; z_t\right) \geq & \rv{\left\langle\nabla_y \hat{f}\left(x_{t+1}, y_t ; z_t\right), y_t-y_{t+1}\right\rangle-\frac{l}{2}\left\|y_t-y_{t+1}\right\|^2.}
\end{aligned}
\ee 
\sa{Furthermore, from the definition of $\hat{f}$ and $z_{t+1}-z_t=\beta(x_{t+1}-z_t)$, and using $0<\beta\leq 1$, we get}
\be \label{eq:de-z}
\begin{aligned}
\MoveEqLeft\hat{f}\left(x_{t+1}, y_{t+1} ; z_t\right)-\hat{f}\left(x_{t+1}, y_{t+1} ; z_{t+1}\right)  \\
&=\frac{p}{2}\left[\left\|x_{t+1}-z_t \right\|^2-\left\|x_{t+1}-z_{t+1}  \right\|^2\right] = \frac{p}{2}\iprod{z_t - z_{t+1}}{z_{t+1} + z_{t} - 2 x_{t+1}} \\
&= \frac{p}{2} \iprod{z_t - z_{t+1}}{z_{t+1} - z_t \rv{+} \frac{2}{\beta} (z_t - z_{t+1})} = \frac{p}{2} \Big(\frac{2}{\beta} - 1\Big) \|z_t - z_{t+1}\|^2  \\
&\geq \frac{p}{2\beta} \|z_t - z_{t+1}\|^2. 
\end{aligned}
\ee 

Combining \eqref{eq:de-x}, \eqref{eq:de-y} and \eqref{eq:de-z}, we get
$$
\begin{aligned}
\MoveEqLeft \hat{f}\left(x_t, y_t ; z_t\right) - \hat{f}\left(x_{t+1}, y_{t+1}; z_{t+1}\right) 
\\
\geq & \sa{\frac{1}{2\tau_1}} \|x_{t+1} - x_t\|^2
+\frac{p}{2 \beta} \left\| z_{t+1} - z_t\right\|^2 + \hjn{\iprod{\rv{\nabla_y \tilde{f}(x_{t+1}, y_t)}}{y_t - y_{t+1}}} 
\hjn{-\frac{l}{2} \left\|y_t-y_{t+1}\right\|^2.}
\end{aligned}
$$

\textbf{Dual Descent}: 
\rv{Recall that the dual function of the auxiliary minimax problem, $\Psi_r(\cdot;z)$, has a composite form, i.e., $\Psi_r(\cdot;z)=\Psi(\cdot;z)-h(\cdot)$.} \rv{For any $z\in \cX$ and $y\in Y$, Lemma~\ref{lem:Danskin} implies that $\nabla \Psi(y;z)=\nabla_y\tilde f(x^*(y,z),y)$; therefore, for any $y_1,y_2\in Y$, we have $\norm{\nabla \Psi(y_1;z)-\nabla \Psi(y_2;z)}\leq \norm{\nabla_y\tilde f(x^*(y_1,z),y_1)-\nabla_y\tilde f(x^*(y_2,z),y_2)}\leq l(\norm{x^*(y_1,z)-x^*(y_2,z)}+\norm{y_1-y_2})\leq l(1+\gamma_2)\norm{y_1-y_2}$ which follows from \eqref{eq:lip-y-yz}.} 

Thus, $\Psi(\cdot ; z)$ is $L_{\Psi}$ smooth \sa{for all $z\in\cX$} with $L_{\Psi}\sa{:=}l(1+ \gamma_2)$, \sa{where $\gamma_2:=\frac{p+l}{p-l}$. Using the fact that $\nabla_y\Psi(y_t;z_t )
\rv{=\nabla_y\tilde f(x^*(y_t,z_t),y_t)}$, we get}
\be \label{eq:dual-de-y}
\begin{aligned}
& \Psi\left(y_{t+1} ; z_t\right)-\Psi\left(y_t ; z_t\right)  
\geq \left\langle \rv{\nabla_y\tilde f(x^*(y_t,z_t),y_t)}, y_{t+1}-y_t\right\rangle-\frac{L_{\Psi}}{2}\left\|y_{t+1}-y_t\right\|^2. 
\end{aligned}
\ee
\sa{Moreover, from the definition of $\Psi$, we also have}
\begin{equation}
\label{eq:dual-descent}
\begin{aligned}
\MoveEqLeft\Psi\left(y_{t+1} ; z_{t+1}\right) -\Psi\left(y_{t+1} ; z_t\right) \\
= & \hat{f}\left(x^*\left(y_{t+1}, z_{t+1}\right), y_{t+1} ; z_{t+1}\right)-\hat{f}\left(x^*\left(y_{t+1}, z_t\right), y_{t+1} ; z_t\right) \\
\geq & \hat{f}\left(x^*\left(y_{t+1}, z_{t+1}\right), y_{t+1} ; z_{t+1}\right)-\hat{f}\left(x^*\left(y_{t+1}, z_{t+1}\right), y_{t+1} ; z_t\right) \\ = & \frac{p}{2}\left[\left\| x^*\left(y_{t+1}, z_{t+1}\right) - z_{t+1}\right\|^2-\left\|  x^*\left(y_{t+1}, z_{t+1}\right) - z_t \right\|^2\right] \\
= & {\frac{p}{2}\iprod{z_{t+1}-z_t} {~z_{t+1}+z_t-2 x^*\left(y_{t+1}, z_{t+1}\right)}},
\end{aligned}
\end{equation}
where the 
inequality  uses the optimality of $x^*(y_{t+1}, z_t)$, i.e., $\hat{f}(x^*(y_{t+1}, z_t), y_{t+1}; z_t) \leq \hat{f}(x^*(y_{t+1}, z_{t+1}), y_{t+1}; z_t)$.  
Combining 
\eqref{eq:dual-descent} with \eqref{eq:dual-de-y}, we have
$$
\begin{aligned}
\MoveEqLeft\Psi\left(y_{t+1} ; z_{t+1}\right)-\Psi\left(y_t ; z_t\right) 
\\
\geq &  \left\langle
\rv{\nabla_y \tilde{f}\left(x^*\left(y_t, z_t\right), y_t\right)},
y_{t+1}-y_t
\right\rangle-\frac{L_{\Psi}}{2} \left\|
y_{t+1}-y_t
\right\|^2 \\
&+\frac{p}{2} \iprod{z_{t+1}-z_t}{~z_{t+1}+z_t-2 x^*\left(y_{t+1}, z_{t+1}\right)}.
\end{aligned}
$$
\textbf{Proximal Descent:} Let \hjn{$y^*\left(z_{t+1}\right) \in Y^*(z_{t+1})$ and $y^*\left(z_t\right) \in Y^*(z_t)$}. Then, we have
$$
\begin{aligned}
\MoveEqLeft P\left(z_t\right)-P\left(z_{t+1}\right) \\ 
= & \min_{x\in X}\max_{y\in\cY} \rv{\hat f_r(x,y;z_{t})}-\min_{x\in X}\max_{y\in\cY} \rv{\hat f_r(x,y;z_{t+1})}\\
= & \rv{\max_{y\in\cY}\min_{x\in X} \hat f_r(x,y;z_{t})-\max_{y\in\cY}\min_{x\in X} \hat f_r(x,y;z_{t+1})}\\
= & \rv{\Psi_r}\left(y^*\left(z_t\right) ; z_t\right) -\rv{\Psi_r}\left(y^*\left(z_{t+1}\right); z_{t+1}\right)\\
\geq &\rv{\Psi_r}\left(y^*\left(z_{t+1}\right) ; z_t\right)-\rv{\Psi_r}\left(y^*\left(z_{t+1}\right) ; z_{t+1}\right) \\ 
= & \hat{f}\left(x^*\left(y^*\left(z_{t+1}\right), z_t\right), y^*\left(z_{t+1}\right) ; z_t\right)-\hat{f}\left(x^*\left(y^*\left(z_{t+1}\right), z_{t+1}\right), y^*\left(z_{t+1}\right) ; z_{t+1}\right) \\
\geq & \hat{f}\left(x^*\left(y^*\left(z_{t+1}\right), z_t\right), y^*(z_{t+1}) ; z_t\right)-\hat{f}\left(x^*\left(y^*\left(z_{t+1}\right), z_t\right), y^*\left(z_{t+1}\right) ; z_{t+1}\right) \\
= & \frac{p}{2} \Big[\|  x^*\left(y^*\left(z_{t+1}\right); z_t\right) - z_t \|^{\sa{2}}-\| x^*\left(y^*\left(z_{t+1}\right); z_t\right) -z_{t+1}  \|^{\sa{2}}\Big] \\
= & -\frac{p}{2}\iprod{z_{t+1}-z_t}{~z_{t+1}+z_t-2 x^*\left(y^*\left(z_{t+1}\right), z_t\right)}, 
\end{aligned}
$$
where \rv{the second equality follows from Lemma~\ref{lem:PLPL-switch} since $\hat f_r(x,y;z)$ is strongly convex in $x$ and PL in $y$ for every $z\in\cX$,} the first inequality uses the optimality of $y^*(z_t)$ for \rv{$\max_y\Psi_r(\cdot, z_t)$} and the second inequality is from the optimality of $x^*(y^*(z_{t+1}), z_{t+1})$ \rv{for $\min_{x\in X} \hat f(x,y^*(z_{t+1}, z_{t+1})$.} 

\textbf{Potential Function:} We adopt the potential function as in \citep{zhang2020single,yang2022faster}, 
\begin{equation}
\label{eq:Vt}
    V_t:=V\left(x_t, y_t, z_t\right)=\rv{\hat{f}_r}\left(x_t, y_t ; z_t\right) -2 \rv{\Psi_r}\left(y_t ; z_t\right)+2 P\left(z_t\right),\qquad \forall~t\geq 0.
\end{equation} 
\rv{
Note that we have $P(z_t)\geq \rv{\Phi^*}=\min_{x\in X}\Phi(x)$, and from weak duality, we also have $\rv{\hat{f}_r}\left(x_t, y_t ; z_t\right)\geq P(z_t)\geq  \rv{\Psi_r}\left(y_t ; z_t\right)$; thus, we can conclude that $V_t\geq P(z_t)\geq \rv{\Phi^*}$ for all $t\in\mathbb{Z}_+$.}

\rv{Recall that $y_{t+1}=\argmin_{y\in\cY} q_t(y)$ where $q_t(y):=\tau_2 h(y)+\frac{1}{2}\norm{y-\Big(y_t+\tau_2\nabla_y\tilde f(x_{t+1},y_t;z_t)\Big)}^2$. Since $h(\cdot)$ is $\zeta$-weakly convex, it follows that $q_t(\cdot)$ is strongly convex with modulus $(1- \tau_2 \zeta)$. Therefore, we get} 
\[\begin{aligned}
   \MoveEqLeft \tau_2 h(y_{t}) + \frac{1}{2}\|\tau_2 \nabla_y \hat{f}\left(x_{t+1}, y_t, z_t\right) \|^2 \\
   \geq & \tau_2 h(y_{t+1}) + \frac{1}{2}\|y_{t+1} -( y_t + \tau_2 \nabla_y \hat{f}\left(x_{t+1}, y_t, z_t\right)) \|^2 + \frac{1-\tau_2 \zeta}{2} \|y_{t+1} - y_t\|^2, 
\end{aligned}
  \]
which implies that
\begin{equation}
\label{eq:h-relation}
    \left\langle\nabla_y \tilde{f}\left(x_{t+1}, y_t\right), y_{t+1}-y_{t}\right\rangle + h(y_t) - h(y_{t+1})\geq  \frac{2 - \tau_2\zeta}{2 \tau_2} \|y_{t+1} - y_t\|^2. 
\end{equation}
Since $V_t=\rv{\hat{f}\left(x_t, y_t ; z_t\right) -2\Psi\left(y_t ; z_t\right)+2 P\left(z_t\right)+h(y_t)}$, \rv{combining \eqref{eq:h-relation} with} the above 
descent \sa{results}, 
we get
\begingroup
\allowdisplaybreaks
\be \label{eq:de-V}
\begin{aligned}
\MoveEqLeft V_t- V_{t+1} \\
 \geq & \rv{\frac{1}{2\tau_1} \left\|x_{t+1}-x_t\right\|^2 -  
 \hjn{\frac{l}{2} \|y_{t+1} - y_t\|^2}
 }+ \frac{p}{2  \beta} \left\| z_t - z_{t+1}\right\|^2 \\
& \rv{+\fprod{\nabla_y\tilde f(x_{t+1},y_t),~y_{t+1}-y_t}} \hjn{+ h(y_t) - h(y_{t+1})}\\
& \rv{+2\left\langle\nabla_y \tilde{f}\left(x^*\left(y_t, z_t\right), y_t \right)-\nabla_y\tilde f(x_{t+1},y_t),~y_{t+1}-y_t \right\rangle}
-L_{\Psi}
\hjn{\|y_{t+1} - y_t\|^2}
\\
& + p \iprod{z_{t+1}-z_t}{z_{t+1}+z_t-2 x^*\left(y_{t+1}, z_{t+1}\right)} \\
& -p \iprod{z_{t+1}-z_t} {z_{t+1}+z_t-2 x^*\left(y^*\left(z_{t+1}\right), z_t\right)} \\
\rv{\geq} 
& \rv{\frac{1}{2\tau_1} \left\|x_{t+1}-x_t\right\|^2 +
\hjn{\Big(\frac{1}{\tau_2} - \frac{l + \zeta}{2} - L_{\Psi}\Big) \|y_{t+1} - y_t\|^2}
+\frac{p}{2 \beta} \left\|z_t-z_{t+1}\right\|^2}\\
& \rv{+2\left\langle\nabla_y \tilde{f}\left(x^*\left(y_t, z_t\right), y_t\right)-\nabla_y \tilde{f}\left(x_{t+1}, y_t\right),~y_{t+1}-y_t \right\rangle} \\
& +2p \iprod{z_{t+1}-z_t}{x^*\left(y^*\left(z_{t+1}\right), z_t\right)-x^*\left(y_{t+1}, z_{t+1}\right)}\\
\geq & \rv{\frac{1}{2\tau_1} \left\|x_{t+1}-x_t\right\|^2+}
\hjn{\frac{1}{2\tau_2}\|y_{t+1} - y_t\|^2}
+\frac{p}{2 \beta} \left\|z_t-z_{t+1}\right\|^2 \\
& \rv{+2 \left\langle\nabla_y \tilde{f}\left(x^*\left(y_t, z_t\right), y_t\right)-\nabla_y \tilde{f}\left(x_{t+1}, y_t\right),~y_{t+1}-y_t \right\rangle} \\
& +2 p \iprod{z_{t+1}-z_t}{x^*\left(y^*\left(z_{t+1}\right), z_t\right)-x^*\left(y_{t+1}, z_{t+1}\right)},
\end{aligned}
\ee
\endgroup
where 
the last inequality uses \rv{$\frac{1}{\tau_2}\geq l+\zeta+2L_\Psi=9l+\zeta$}
\rv{which follows from \sa{$p=2l$}, implying $L_{\Psi}=4 l$, and from our choices of {$\tau_2=\frac{1}{16}(\frac{3}{\tau_1}+\zeta)^{-1}$ for $\tau_1\leq \frac{1}{p+l}$}.} 

Consider $A:=2 \left\langle\nabla_y \tilde{f}\left(x^*\left(y_t, z_t\right), y_t\right)-\nabla_y \tilde{f}\left(x_{t+1}, y_t\right),~\rv{y_{t+1}-y_t} \right\rangle$. \sa{Note that}
$$
\begin{aligned}
A & \geq-2 \|\nabla_y \tilde{f}\left(x^*\left(y_t, z_t\right), y_t\right)-\nabla_y \tilde{f}\left(x_{t+1}, y_t\right)\| \| \rv{y_{t+1}-y_t} \| \\
& \geq-2 l\left\|x_{t+1}-x^*\left(y_t, z_t\right)\right\| \| \rv{y_{t+1}-y_t} \| \\
& \geq 
\hjn{-l \nu\left\|y_{t+1}-y_t  \right\|^2-\frac{l}{ \nu}\left\|x_{t+1}-x^*\left(y_t, z_t\right)\right\|^2,}
\end{aligned}
$$
where the second inequality uses \sa{the $l$-Lipschitz continuity of $\nabla_{y} \tilde{f}(\cdot,y)$ for any $y\in Y$}, and the last inequality is due to \sa{the Young's inequality} with $\nu >0$ specified later. Moreover, using Lemma \ref{lem:lip} to bound the term $\left\|x_{t+1}-x^*\left(y_t, z_t\right)\right\|^2$ yields 
\be \label{eq:A}
A \geq 
\hjn{- l \nu  \left\|y_{t+1}-y_t \right\|^2-\frac{l}{\nu} \gamma_3^2 \left\|x_{t+1} - x_t\right\|^2.}
\ee
Next, consider $B:=2 p\iprod{z_{t+1}-z_t}{x^*\left(y^*\left(z_{t+1}\right), z_t\right)-x^*\left(y_{t+1}, z_{t+1}\right)}$. Note that
\be \label{eq:B}
\begin{aligned}
B = &2 p\iprod{z_{t+1}-z_t}{x^*\left(y^*\left(z_{t+1}\right), z_t\right)-x^*\left(y^*\left(z_{t+1}\right), z_{t+1}\right)} \\
& +2 p\iprod{z_{t+1}-z_t}{x^*\left(y^*\left(z_{t+1}\right), z_{t+1}\right)-x^*\left(y_{t+1}, z_{t+1}\right)} \\
 \geq & -2 p \gamma_1\left\|z_{t+1}-z_t\right\|^2+2 p\iprod{z_{t+1}-z_t}{x^*\left(y^*\left(z_{t+1}\right), z_{t+1}\right)-x^*\left(y_{t+1}, z_{t+1}\right)} \\
\geq & -\left(2 p \gamma_1+\frac{p}{6 \beta}\right)\left\|z_{t+1}-z_t\right\|^2-6 p \beta\left\|x^*\left(y^*\left(z_{t+1}\right), z_{t+1}\right)-x^*\left(y_{t+1}, z_{t+1}\right)\right\|^2,
\end{aligned}
\ee
where in the first inequality we use Lemma \ref{lem:lip} and the Cauchy-Schwarz
inequality, and for the second inequality we used Young's inequality for \rv{$\beta>0$ specified later}, \rv{i.e., $2\iprod{a}{b}\leq \frac{1}{\theta} \norm{a}^2+\theta\norm{b}^2$ for any $\theta>0$.}

Plugging \eqref{eq:A} and \eqref{eq:B} into \eqref{eq:de-V} leads to
\be \label{eq:de-V2}
\begin{aligned}
V_t- V_{t+1} \geq & \left(\frac{1}{2\tau_1}-\frac{l}{\nu} \gamma_3^2\right) \left\|x_{t+1} - x_t \right\|^2+\left( \hjn{\frac{1}{2\tau_2} - l \nu } \right) \hj{\left\| y_{t+1} - y_t 
 \right\|^2} \\
& +\left(\frac{p}{2 \beta}-2 p \gamma_1-\frac{p}{6 \beta}\right) \left\|z_t-z_{t+1}\right\|^2-6 p \beta \left\|x^*\left(y^*\left(z_{t+1}\right), z_{t+1}\right)-x^*\left(y_{t+1}, z_{t+1}\right)\right\|^2.
\end{aligned}
\ee
\rv{Next, we bound 
$\|y_{t+1} - y_t\|$ by bounding $\left\| y^+(z_t)  - y_{t+1} \right\|$. First, recall that $y_{t+1}$ and $y^+(z_t)$ have the form:
 $y_{t+1}=\prox_{\tau_2 h}\Big(y_t + \tau_2 \rv{\nabla_y} \tilde{f}(x_{t+1}, y_{t})\Big)$ and $y^+(z_t)=\prox_{\tau_2 h}\Big(y_t+\tau_2 \nabla_y \tilde{f}(x^*(y_t,z_t), y_{t})\Big)$; hence, 
 \begin{equation}
\label{eq:y+_y}
\begin{aligned}
\left\| y^+(z_t)  - y_{t+1} \right\|
&\leq \frac{\tau_2}{1 - \tau_2 \zeta} \norm{\nabla_y \tilde{f}(x_{t+1}, y_t) - \nabla_y \tilde{f}(x^*(y_t, z_t), y_t)}\\
&\leq \frac{\tau_2 l}{1 - \tau_2 \zeta} \norm{x_{t+1}-x^*(y_t, z_t)}\leq \frac{\tau_2 l \gamma_3}{1 - \tau_2 \zeta} \norm{x_{t+1}-x_t},
\end{aligned}
\end{equation}
where in the first inequality we used the fact that $\prox_{\tau_2 h}(\cdot)$ is $\frac{1}{1-\tau_2 \zeta}$-Lipschitz continuous for $0<\tau_2<\frac{1}{\zeta}$, the second inequality and 
the last one follow from  the Lipschitz continuity of $\nabla_{y} \tilde f(\cdot,y_t)$ and Lemma \ref{lem:lip}, respectively. Thus,}
\hjn{\begin{equation}
\label{eq:grad_y_lower_bound}
\begin{aligned} \left\| y_{t+1} - y_t  \right\|^2 
\geq & \rv{\frac{1}{2}\left\|\rv{y^+(z_t)}  - y_t\right\|^2- \left\| y^+(z_t)  - y_{t+1} \right \|^2}  \\
\geq & \frac{1}{2}\left\|\rv{y^+(z_t)} - y_t \right\|^2  - \frac{\tau_2^2l^2}{(1 - \tau_2 \zeta)^2} \gamma_3^2\left\|x_{t+1}-x_t\right\|^2,
\end{aligned}
\end{equation}}%
\rv{where in the first inequality we use $\norm{a+b}^2\leq 2\norm{a}^2+2\norm{b}^2$, and the second one follows from \eqref{eq:y+_y}.}

\sa{On the other hand, since} $x^*\left(y^*\left(z_{t+1}\right), z_{t+1}\right)=x^*\left(z_{t+1}\right)$, 
we have
\be \label{eq:x-solu}
\begin{aligned}
\MoveEqLeft \left\|x^*\left(y^*\left(z_{t+1}\right), z_{t+1}\right)-x^*\left(y_{t+1}, z_{t+1}\right)\right\|^2\\
=&\left\|x^*\left(z_{t+1}\right)-x^*\left(y_{t+1}, z_{t+1}\right)\right\|^2 \\
\leq & 4\left\|x^*\left(z_{t+1}\right)-x^*\left(z_t\right)\right\|^2+4\left\|x^*\left(z_t\right)-x^*\left(y^{+}\left(z_t\right), z_t\right)\right\|^2 \\
& +4\left\|x^*\left(y^{+}\left(z_t\right), z_t\right)-x^*\left(y_{t+1}, z_t\right)\right\|^2+4\left\|x^*\left(y_{t+1}, z_t\right)-x^*\left(y_{t+1}, z_{t+1}\right)\right\|^2 \\
\leq & 4\left\|x^*\left(z_t\right)-x^*\left(y_t^{+}\left(z_t\right), z_t\right)\right\|^2+4 \gamma_2^2\left\|y_t^{+}\left(z_t\right)-y_{t+1}\right\|^2+8 \gamma_1^2\left\|z_t-z_{t+1}\right\|^2 \\
\leq & 4\left\|x^*\left(z_t\right)-x^*\left(y_t^{+}\left(z_t\right), z_t\right)\right\|^2+ \hjn{\frac{\sa{4} \gamma_2^2 \tau_2^2 l^2\gamma_3^2}{(1 -\tau_2 \zeta)^2}}\left\|x_{t+1}-x_t\right\|^2 +8 \gamma_1^2\left\|z_t-z_{t+1}\right\|^2,
\end{aligned}
\ee
\rv{which follows from Lemma \ref{lem:lip} and \eqref{eq:y+_y}.}
\rv{Finally, plugging \eqref{eq:grad_y_lower_bound} and \eqref{eq:x-solu}} into \eqref{eq:de-V2}, we get
\be \label{eq:v-de-1}
\begin{aligned}
V_t- V_{t+1}
\geq & \left[\frac{1}{2\tau_1}-\frac{l}{\nu} \gamma_3^2-\left(\frac{1}{2\tau_2} - l \nu\right)\frac{\tau_2^2 l^2}{\hjn{(1-\tau_2 \zeta)^2}} \gamma_3^2 - \frac{\sa{24} p \beta \gamma_2^2 \tau_2^2 l^2}{\hjn{(1 -\tau_2\zeta)^2}}\gamma_3^2 \right] \left\|x_{t+1} - x_t\right\|^2 \\
& -24 p \beta \left\|x^*\left(z_t\right)-x^*\left(y_t^{+}\left(z_t\right), z_t\right)\right\|^2 
+\frac{1}{2}\left(\frac{1}{2\tau_2}- l \nu\right) \left\| y^+(z_t) - y_t\right\|^2
\\
& +\left[\frac{p}{2 \beta}-2 p \gamma_1-\frac{p}{6 \beta}-48 p \beta \gamma_1^2\right] \left\|z_t-z_{t+1}\right\|^2 \\
\geq & \rv{\frac{3}{10\tau_1}} \left\|x_{t+1} - x_t\right\|^2
+ \frac{1}{8\tau_2} \left\| y^+(z_t) - y_t\right\|^2
+\frac{p}{4 \beta} \left\|z_t-z_{t+1}\right\|^2 \\
& -24 p \beta \left\|x^*\left(z_t\right)-x^*\left(y_t^{+}\left(z_t\right), z_t\right)\right\|^2,
\end{aligned}
\ee
where \rv{the last inequality holds due to our choice of \sm{} parameters, i.e., $\tau_1\in (0, \frac{1}{p+l}]$, $\tau_2=\frac{1}{16}(\frac{3}{\tau_1}+\zeta)^{-1}$, $\beta=\alpha\min\{\mu, l\}\tau_2$ for any $\alpha\in(0,\frac{1}{406}]$ and $p=2l$,}
\rv{whenever $0<\nu\leq\frac{1}{4 l \tau_2}$}. \rv{Indeed, our choice implies that} $\gamma_1=2$, $\gamma_2=3$ and $\gamma_3^2 \sa{\leq} \frac{2}{\tau_1^2 l^2}+2$; moreover, choosing $\nu=\frac{1}{4 l \tau_2}\rv{\geq}\frac{\revise{12}}{l \tau_1}$, we have $\frac{1}{2\tau_2}-\sa{l \nu}=\frac{1}{4\tau_2}$ \sa{and this particular selection of parameters implies that}
\begingroup
\allowdisplaybreaks
\begin{align*}
\MoveEqLeft \frac{l}{\nu} \gamma_3^2+\left(\frac{1}{2\tau_2}- l \nu\right) \hjn{\frac{\tau_2^2l^2}{(1-\tau_2 \zeta)^2}}\gamma_3^2+ \sa{24} p \beta \gamma_2^2\frac{\tau_2^2 l^2 }{\hjn{(1-\tau_2\zeta)^2}}\gamma_3^2 \\
= & \left[\nu^{-1}\left(l \tau_1 \gamma_3^2\right)+\rv{\frac{\tau_2}{4\tau_1} \frac{1}{(1-\tau_2\zeta)^2}}\left(l^2 \tau_1^2 \gamma_3^2\right)+\rv{48\frac{\tau_2}{\tau_1}\cdot 9l\beta \frac{\tau_2}{(1-\tau_2\zeta)^2}}\left(l^2 \tau_1^2 \gamma_3^2\right)\right] \sa{\frac{1}{\tau_1}} \\
\leq &\rv{\left[\frac{2 \nu^{-1}}{\tau_1 l}+\frac{1}{96}\frac{1}{(1-\tau_2\zeta)^2}+  18 l^2 \cdot \frac{\min\{\mu,l\}}{l}\frac{\tau_2^2}{(1-\tau_2\zeta)^2}\cdot\alpha\right]} \sa{\frac{1+\tau_1^2 l^2}{\tau_1}} \\
\leq &\rv{\left[\frac{2 \nu^{-1}}{\tau_1 l}+\frac{1}{96}\frac{1}{(1-\tau_2\zeta)^2}+ \frac{\alpha}{12}\cdot\frac{1}{96}\frac{1}{(1-\tau_2\zeta)^2}\right]} \sa{\frac{1+\tau_1^2 l^2}{\tau_1}} \\
\leq & \left[\frac{1}{6}+\rv{\frac{1}{84}+\frac{\alpha}{12}\cdot\frac{1}{84}} \right] \frac{10}{9}\cdot \frac{1}{\tau_1} \leq \sa{\frac{1}{5\tau_1}}
\end{align*}
\endgroup
\rv{holds for any $\alpha\in(0,\rv{\frac{1}{2}}]$, where in the first inequality we used $\gamma_3^2 \sa{\leq} \frac{2}{\tau_1^2 l^2}+2$ and $\frac{\tau_2}{\tau_1}\leq\frac{1}{48}$, the second inequality follows from $\tau_2^2\leq\frac{\tau_1^2}{48^2}$ and $\tau_1\leq \frac{1}{3l}$, finally, for the last inequality we use the bounds $\nu^{-1}\leq \frac{l\tau_1}{12}$ and $1+\tau_1^2l^2\leq \frac{10}{9}$ together with $\frac{1}{(1-\tau_2\zeta)^2}\leq (\frac{16}{15})^2$; hence, $\frac{1}{96}\cdot\frac{1}{(1-\tau_2\zeta)^2}\leq \frac{1}{84}$, which follows from $\tau_2\leq\frac{1}{16\zeta}$. On the other hand, we also have}
$$
\frac{p}{2 \beta}-2 p \gamma_1-\frac{p}{6 \beta}-48 p \beta \gamma_1^2 
\rv{=}\left[\frac{1}{\sa{3}}-4 \beta-192 \beta^2\right] \frac{p}{\beta} \geq \frac{p}{4 \beta}
$$
\sa{holds for all $\beta\in(0,\frac{1}{78}]$, and since \rv{$\beta=\alpha\min\{\mu,l\}\tau_2\leq \alpha \min\{\mu,l\} \frac{\tau_1}{48}\leq \frac{\alpha}{144} \frac{\min\{\mu,l\}}{l} \leq\frac{\alpha}{144}$,} we conclude that the inequality holds for all $\alpha\in(0,1]$.}

\sa{Next, we bound the last term on the right hand side of \eqref{eq:v-de-1} using Lemma \ref{lem:bound-new} as follows:}
\begin{equation}
    \label{eq:xsxs}
\begin{aligned}
\MoveEqLeft 24 p \beta\left\|x^*(\sa{z_t})-x^*\left(y^{+}(\sa{z_t}), \sa{z_t}\right)\right\|^2 \\
\leq & 
{\frac{72p\beta}{\mu(p-l) \tau_2^2} (1 + \tau_2^2 l^2 + \gamma_2^2 \tau_2^2 l^2 )  \| y^+(z_t) - y_t\|^2 } 
\leq {\frac{1}{16\tau_2} \|y^+(z_t) - y_t\|^2,}
\end{aligned}
\end{equation}
where \sa{in the second inequality we use $p=2l$, \rv{$\gamma_2=3$,} $\tau_2 l \leq 1/144$, \rv{$\beta=\alpha\min\{\mu, l\}\tau_2$} for 
\hjn{$\alpha\in(0,\frac{1}{2306}]$} implying that}
{$$\frac{72 p \beta}{\mu(p-l)\rv{\tau_2^2}} (1 + \tau_2^2 l^2 + \gamma_2^2 \tau_2^2 l^2 )    \leq \frac{144 \beta}{\mu \tau_2^2} \Big(1 + \frac{10}{144^2}\Big)\rv{\leq \frac{\alpha}{\tau_2} \Big(144 + \frac{10}{144}\Big)\leq \frac{1}{16\tau_2}.} 
$$}%
\sa{Plugging the bound \rv{in~\eqref{eq:xsxs}} into \eqref{eq:v-de-1} and using the fact $\norm{z_{t+1}-z_t}=\beta\norm{x_{t+1}-z_t}$, we get}
\be \label{eq:v-descent}
\ V_t- V_{t+1} \geq \sa{\frac{3}{10\tau_1}} \left\|x_{t+1}-x_t\right\|^2+{\frac{1}{16\tau_2} \left\|y^+(z_t) - y_t \right\|^2}+\frac{p \beta}{4} \left\|x_{t+1}-z_t\right\|^2.
\ee
\textbf{\rv{Stationarity} Measure:}
\rv{From the first-order optimality conditions, for all $t\geq 1$, we have $G_t^x\in\nabla_x\tilde f(x_t,y_t)+\partial \delta_X(x_t)$ and $G_t^y\in\nabla_y\tilde f(x_t,y_t)-\partial h(y_t)$, where $G_t^x$ and $G_t^y$ are defined in \eqref{eq:subgradient-G}.} Note that given $a,b,c\in\cX$ and a closed convex set $X\subset\cX$, it holds that $\norm{\cP_X(a+b)-c}\leq \norm{\cP_X(a)-c}+\norm{b}$; hence, 
\[ 
\begin{aligned}
    \rv{\tau_1 \|G_{t+1}^x \|} \leq & \|x_{t+1} - x_t \| + \tau_1 \| \nabla_x \tilde f(x_{t+1}, y_{t+1}) - \nabla_x \tilde f(x_t,y_t) \| + \tau_1 p\|z_t - x_t\| \\
    \leq & \| x_{t+1}- x_t  \| + \tau_1l(\|x_{t+1} - x_t\| + \|y_{t+1} - y_t\|)+ \tau_1 p \|x_t - z_t\|  \\
    \leq &  (1 + \tau_1 p+\tau_1 l) \|x_{t+1} - x_t\| + \tau_1 p\|x_{t+1} - z_t\| + \tau_1 \rv{l} \|y_{t+1} - y_t\|. 
\end{aligned}
\] 
Then, 
\mod{using} the inequality $ (a+b+c)^2 \leq 3(a^2 +b^2 +c^2)$, we get
\be \label{eq:grad-x} 
\begin{aligned}
\rv{\|G_{t+1}^x\|^2 }
\leq  3\frac{(1 +\tau_1 p + \tau_1 l)^2}{\tau_1^2} \|x_{t+1} - x_t\|^2 + 3 p^2\|x_{t+1} - z_t\|^2 + {3 \rv{l^2}\|y_{t+1} - y_t\|^2}. 
\end{aligned}\ee
In addition, \rv{for $0<\tau_2<\frac{1}{\zeta}$, $\prox_{\tau_2 h}(\cdot)$ is $\frac{1}{1-\tau_2 \zeta}$-Lipschitz continuous, and for any given $a,b,c\in\cY$, it also holds that $\norm{\prox_{\tau_2 h}(a+b)-c}\leq \norm{\prox_{\tau_2 h}(a)-c}+\frac{1}{1-\tau_2 \zeta}\norm{b}$; hence, together with the definition of $y^+(z_t)=\prox_{\tau_2 h}\Big(y_t+\tau_2 \nabla_y \tilde{f}(x^*(y_t,z_t), y_{t})\Big)$ and using Lemma ~\ref{lem:lip},} it also holds that
\begin{equation}
\label{eq:y-dif-bound}
\begin{aligned}
\| y_{t+1} - y_t \| 
= & \left\| \prox_{\tau_2 h}(y_t + \tau_2 \nabla_y \tilde f(\mod{x_{t+1}},y_t))   - y_t \right \|  \\
\leq & \norm{y^+(z_t) - y_t} +
\frac{\tau_2}{1 -\tau_2 \zeta} \left\|\nabla_y \tilde{f}\left( \mod{x_{t+1}}, y_t\right)-\nabla_y \tilde{f} \left(x^*(y_t, z_t), y_t\right)\right\| \\
\leq & \rv{\norm{y^+(z_t) - y_t}} +\frac{\tau_2 l }{1 -\tau_2 \zeta}  \left\|x_{t+1}-x^*\left(y_t, z_t\right)\right\|,\\
\leq & \rv{\norm{y^+(z_t) - y_t}} +\frac{\tau_2 l }{1 -\tau_2 \zeta} \gamma_3 \left\|x_{t+1}-x_t\right\|.
\end{aligned}
\end{equation}
Note that $\tau_2 \left\|  \rv{G_{t+1}^y} \right\| 
\leq \| y_{t+1} - y_t \| {+ \tau_2 l\| y_{t+1} - y_t \|}$; therefore, we get
\begin{equation}
\label{eq:grad-y-eqv}
\begin{aligned}
\left\|  \rv{G_{t+1}^y} \right\|^2 
\leq  \rv{\frac{2(1 +\tau_2 l)^2}{\tau_2^2}\left(\norm{y^+(z_t) - y_t}^2+\frac{\tau_2^2  l^2}{(1 -\tau_2 \zeta)^2}  \gamma_3^2\left\|x_{t+1} - x_t\right\|^2\right)}.
\end{aligned}
\end{equation}
Then, \rv{let $\bar\kappa:=\max\{\kappa,1\}$, using \eqref{eq:grad-x}, \eqref{eq:y-dif-bound} and \eqref{eq:grad-y-eqv}}, we have
\be \label{eq:grad-comb}
\begin{aligned}
\MoveEqLeft \|G_{t+1}^x\|^2+ \rv{\bar\kappa}\left\| G_{t+1}^y \right\|^2 \sa{-3 p^2 \|x_{t+1} - z_t \|^2}\\
\leq & \rv{\frac{3}{\tau_1^2}\left( (1 + \tau_1 p + \tau_1 l)^2 +  \Big(l^2+\frac{\bar\kappa(1+\tau_2 l)^2}{3\tau_2^2}\Big)\frac{ 2\tau_1^2\tau_2^2 l^2 }{(1 -\tau_2 \zeta)^2 } \gamma_3^2\right)} \left\|x_{t+1} - x_t\right\|^2 \\
& + \rv{\frac{2}{\tau_2^2}\Big(\bar\kappa(1 +\tau_2 l)^2+3\tau_2^2l^2\Big)}\left\| y^+(z_t) - y_t   \right\|^2,\\
\leq &\rv{\frac{172}{10}\frac{\bar\kappa}{\tau_1^2}\norm{x_{t+1}-x_t}^2+\frac{33}{16}\frac{\bar\kappa}{\tau_2^2}\norm{y^+(z_t)-y_t}^2,}
\end{aligned}
\ee
\rv{which follows from the following bounds: $2\bar\kappa(1+\tau_2 l)^2+6\tau_2^2 l^2\leq 2\bar\kappa[(1+\frac{1}{144})^2+\frac{3}{144^2}]\leq \frac{33}{16}\bar\kappa$, $3(1+ \tau_1 p + \tau_1 l)^2=(1+3 l\tau_1)^2\leq 12$ and
$$
\begin{aligned}
3\Big(l^2+\frac{\bar\kappa(1+\tau_2 l)^2}{3\tau_2^2}\Big)\frac{ 2\tau_1^2\tau_2^2 l^2 }{(1 -\tau_2 \zeta)^2 } \gamma_3^2
& \leq \Big(\tau_2^2l^2+\frac{\bar\kappa}{3}(1+\tau_2 l)^2\Big)\cdot\frac{12}{(1 -\tau_2 \zeta)^2}\cdot(1+\tau_1^2 l^2)\\
& \leq 12\Big(\frac{1}{144^2}+\frac{\bar\kappa}{3}\Big(1+\frac{1}{144}\Big)^2\Big)\Big(\frac{16}{15}\Big)^2\Big(1+\frac{1}{9}\Big)\leq\frac{52}{10}\bar\kappa,
\end{aligned}
$$}%
where we used \rv{$\tau_1 l\leq \frac{1}{3}$ and $\tau_2 l \leq \frac{1}{144}$ for $p=2l$, $\gamma_3^2\leq 2(1+\frac{1}{\tau_1^2 l^2})$ and $\frac{1}{(1-\tau_2\zeta)^2}\leq (\frac{16}{15})^2$ together with $\bar\kappa\geq 1$.}

\textbf{Putting pieces together:} 
\sa{Combining \eqref{eq:v-descent} and \eqref{eq:grad-comb}, we get}
{\be \label{eq:decen-sum}
\begin{aligned}
\MoveEqLeft\left \| G_{t+1}^x \right\|^2+ \rv{\bar\kappa}  \left\|G_{t+1}^y \right\|^2 \\
\leq & \rv{\frac{172}{10}}\frac{{\bar\kappa}}{\tau_1^2} \left\|x_{t+1} -x_t\right\|^2+ \rv{\frac{33}{16}}\frac{{\bar\kappa}}{\rv{\tau_2^2}} \left\| y^+(z_t) - y_t \right\|^2   +3\sa{p^2} \left\|x_{t+1}-z_t\right\|^2\\
\leq & \max \left\{\frac{
\rv{58}\rv{\bar\kappa}}{\tau_1}, \frac{
\rv{33}\rv{\bar\kappa}}{\tau_2}, \frac{12 p}{\beta}\right\}\left( V_t- V_{t+1}\right) \\
\leq & \frac{O(1) \rv{\bar\kappa}}{\tau_2}\left( V_t- V_{t+1}\right),
\end{aligned}
\ee}%
where in the third inequality we use \rv{$\frac{1}{\tau_1}\leq \frac{1}{48 \tau_2}=\cO(\frac{1}{\tau_2})$ and $\frac{p}{\beta}=\frac{2}{\alpha}\frac{l}{\min\{\mu, l\}}\cdot\frac{1}{\tau_2}=\cO\Big(\frac{\rv{\bar\kappa}}{\tau_2}\Big)$ since $\frac{2}{\alpha}\leq 4612$ and $\frac{l}{\min\{\mu, l\}}=\bar\kappa$.} Thus, \eqref{eq:decen-sum} directly implies that
$$
\begin{aligned}
    & 
    \frac{1}{T} \sum_{t=0}^{T-1} \Big(
    \left\|G_{t+1}^x \right\|^2+\rv{\bar\kappa} \left\| G_{t+1}^y \right\|^2\Big)
    \leq  \frac{O(1) \rv{\bar\kappa}}{T}\rv{\Big(\frac{1}{\tau_1}+\zeta\Big)}\left[V_0-\min_{x\in X, y\in Y, z\in\cX} V(x, y; z)\right].
\end{aligned}
$$
\rv{
For any $x\in X$, $y\in Y$ and $z\in \cX$, let
\begin{equation}
\label{eq:Delta}
    \Delta(x,y;z):=\hat{f}_r(x, y ; z)-\Psi_r(y ; z)+P(z)-\Psi_r(y ; z).
\end{equation}
Fix an arbitrary $z\in\cX$, weak duality implies that $\Delta(x,y;z)\geq 0$ for all $x\in X$ and $y\in Y$; moreover, according to Lemma~\ref{lem:minmax}, we can find $x\in X$ and $y\in Y$ such that \rv{$\Delta(x,y;z)=0$}. Therefore, for any $z\in \cX$, we have $\min_{x\in X, y\in Y} \Delta(x,y;z)=0$, which implies that $\min_{x\in X, y\in\cY, z\in\cX} V(x, y; z)=\min_{z\in\cX} \{P(z)+\min_{x\in X, y\in Y}\Delta(x,y;z)\}=\min_{z\in\cX}P(z)$. This observation leads to the following identity: 
$$
\begin{aligned}
\MoveEqLeft V_0-\min_{{x\in X, y\in\cY, z\in\cX}} V(x, y; z) \\
= & P\left(z_0\right)+ \Delta(x_0,y_0;z_0)
-\min_{x\in X, y\in\cY, z\in\cX}\Big\{P(z)+ \Delta(x,y;z)
\Big\} \\
\rv{=} & P\left(z_0\right)-\min_{z\in\cX} P(z)+\Delta(x_0,y_0;z_0).
\end{aligned}
$$}%
\sa{
\rv{Furthermore, for any $z\in X$,} we have}
\begin{equation}
\label{eq:P-Phi-relation}
   P(z)=\sa{\min_{x\in X} \max_{y\in Y} \rv{\tilde f_r}(x, y)+\rv{\frac{p}{2}}\|x-z\|^2}=\min_{x\in X} \sa{\Phi}(x)+\rv{\frac{p}{2}}\|x-z\|^2\leq \sa{\Phi(z)}, 
\end{equation}
and we also have $P(z)\geq \min_{x\in X} \Phi(x)$ for all $z\in X$; therefore, $\min_{z\in\cX} P(z)=\min_{x\in X} \Phi(x)$. Hence, \sa{for any $x_0,z_0\in X$ and $y_0\in Y$, we have}
$$
V_0-\sa{\min_{x\in X, y\in Y, z\in\cX} V(x, y, z) = P\left(z_0\right)-\min_{z\in X} \Phi(z)+\rv{\Delta_0},}
$$
where $\rv{\Delta_0}:=
\Delta(x_0,y_0;z_0)$. Thus, when we initialize $z_0\in \cM$, for the final complexity bound we get
$$
\frac{1}{T} \sum_{t=0}^{T-1} \Big(\left\| G_{t+1}^x \right\|^2 +\rv{\bar\kappa} \left\| G_{t+1}^y \right\|^2\Big) \leq \frac{O(1) \rv{\bar\kappa}}{T}\rv{\Big(\frac{1}{\tau_1}+\zeta\Big)\Big(P(z_0)-\bar F+\Delta_0\Big)},
$$
where we used the fact that 
$\min_{z\in X}\Phi(z)\geq\min_{z\in 
\rv{X}}F(\rv{A(z)})= 
\rv{\bar F}$.
\rv{Moreover, since we have $G_t^x\in\nabla_x\tilde f(x_t,y_t)+\partial \delta_X(x_t)$ and $G_t^y\in\nabla_y\tilde f(x_t,y_t)-\partial h(y_t)$ for all $t\geq 1$, it follows that}
\[ \begin{aligned}
    & {\rm dist}\Big(0, \nabla_x \tilde{f}(x_{t},y_{t}) + \partial \delta_X(x_{t})\Big)\leq \norm{G^x_t},\quad {\rm dist}\Big(0, - \nabla_y \tilde{f}(x_{t},y_{t}) + \partial h (y_{t}) \Big) \leq \norm{G^y_t}.
\end{aligned}
\] 
\rv{Therefore, for any $T\in\mathbb{Z}_+$, we can conclude that $\min_{1 \leq t \leq T} D_t =\mathcal{O}\Big(\frac{\bar \kappa}{T}\Big)$, where
\[ 
\begin{aligned}
    D_t:= {\rm dist}^2\Big(0, \nabla_x \tilde{f}(x_{t},y_{t}) + \partial \delta_{X}(x_{t}) \Big)+\bar\kappa~{\rm dist}^2\Big(0, - \nabla_y \tilde{f}(x_{t},y_{t}) + \partial h(y_{t}) \Big),\quad\forall~t\in\mathbb{Z}_+.
\end{aligned} 
\]}%
\end{proof}

\section{Proof of Theorem~\ref{thm:mu-pl}}
\label{sec:PL-limit-proof}
\begin{proof}
    \sa{Note that $z_0\in X$ and $\{x_t\}\subset X$ by the construction; hence, through induction one can argue that $\{z_t\}\subset X$.} Moreover, \rv{from \eqref{eq:v-descent} in the proof of \cref{thm},} we know that $\{V_t\}_{t \geq 0}$ is non-increasing sequence, where $V_t \triangleq \rv{\hat{f}_r(x_t, y_t; z_t) + 2 P(z_t)- 2\Psi_r(y_t; z_t)}$. We clearly have 
    $P(z_t) - \rv{\Psi_r(y_t; z_t)} \geq 0$ \sa{from weak duality}; \rv{therefore,}
    it holds for all $t\geq 0$, $\rv{\hat{f}_r(x_t,y_t;z_t)} 
    \leq V_t\leq V_0$. 

    \rv{For any $x\in X$, define $Y\supset R^*(x) \triangleq \arg \max_{y\in\cY} \tilde{f}_r(x, y) =\arg \max_{y\in\cY} f(A(x), y)-h(y)$.}
    \rv{Define $\bar l_y:=\max_{x \in X, y \in Y} \|\nabla_y f(A(x),y) \|<\infty$, and 
    let $r^*(x_t)\in R^*(x_t)$ be an arbitrary maximizer of $\tilde f_r(x_t,\cdot)$. Since $\| \nabla_y\hat f(x_t,y;z_t)\| \leq \bar l_y$ for all $y\in Y$, mean-value theorem implies that
    \begin{align*}
        \hat f(x_t,r^*(x_t);z_t)-\hat f(x_t,y_t;z_t)\leq \bar l_y \norm{y_t-r^*(x_t)};
    \end{align*}
    moreover, $h$ being Lipschitz on $Y$ implies that $h(y_t)-h(r^*(x_t))\leq l_h \norm{y_t-r^*(x_t)}$. Thus, summing the two inequalities, we get}
    \begin{equation} \label{eq:low-boun-lip}
        \Phi(x_t)+\frac{p}{2}\norm{x_t-z_t}^2-{(\bar l_y + l_h)\|y_t - r^*(x_t)\|}
        \leq \hat{f}_r(x_t,y_t;z_t), 
    \end{equation}
    where we used \rv{$\Phi(x_t)+\frac{p}{2}\norm{x_t-z_t}^2=\Phi(x_t;z_t)=\max_{y\in\cY}\hat f_r(x_t,y;z_t)= \hat f_r(x_t,r^*(x_t);z_t)=\hat f(x_t,r^*(x_t);z_t)-h(r^*(x_t))$}.
    Therefore, for all $t\geq 0$, 
    $\Phi(x_t)\leq V_0+{(\bar l_y + l_h)} {D_Y}$ with $D_Y:= \max_{y_1,y_2 \in Y}\|y_1 - y_2\|$. 
    Thus, {by definition of $\Phi$}, we get $\rv{F}(A(x_t))+\frac{\rho}{4}\norm{c(x_t)}^2=\Phi(x_t)\leq V_0+{(\bar l_y +l_h)D_Y}$ for $t\geq 0$. Note that $\rv{\rv{F}(A(x_t))\geq} \min_{x\in 
    \rv{X}}F(A(x))
    \rv{=\bar F}$; therefore, 
    we get
    \begin{align}
        \norm{c(x_t)}^2\leq \frac{4}{\rho}\left(V_0-
        \rv{\bar F}+ {(\bar l_y + l_h)D_Y} \right),\quad\forall~t\geq 0.
    \end{align}
    \rv{Recall the definition of the gap function $\Delta(\cdot,\cdot;\cdot)$ given in \eqref{eq:Delta}. If we initialize $x_0=x^*(z_0)$ and $y_0\in Y^*(z_0)$ for some arbitrary $z_0\in\cM$, we have $\Delta(x_0,y_0;z_0)=0$; hence, together with \eqref{eq:P-Phi-relation} and $z_0\in\cM$, it implies that $V_0 = P(z_0) \leq \max_{y \in \cY} \tilde{f}_r(z_0, y)= \max_{y \in \cY} f_r(z_0, y)= \rv{F}(z_0)$, which is independent of $\rho$ --the first equality follows from the fact that $z_0\in\cM$ implies $A(z_0)=z_0$ and $c(z_0)=0$. On the other hand, if we initialize $x_0=z_0$ and $y_0\in Y^*(z_0)$ for some arbitrary $z_0\in\cM$, then $\Delta_0=\hat f_r(z_0,y_0;z_0)-P(z_0)$ since $P(z_0)=\Psi_r(y_0;z_0)$ whenever $y_0\in Y^*(z_0)$. Moreover, since $V_0=P(z_0)+\Delta_0$, we have $V_0=\hat f_r(z_0,y_0;z_0)=\tilde f_r(z_0,y_0)=f_r(z_0,y_0)\leq F(z_0)$, which is independent of $\rho$ as well.}
    Therefore, for {$\rho\geq
    \rv{16\Big(F(z_0)-
    \rv{\bar F}+ (\bar l_y+l_h)D_Y\Big)}$}, 
    we can conclude that $\norm{c(x_t)}\leq \frac{1}{2}$ for all $t\geq 0$. 
    
     Let $x_t = u_ts_tv_t^\top$ be the compact singular value decomposition of $x_t$. Then, $\norm{c(x_t)}=\|x_t^\top x_t - I_r\| = \| v_t (s_t^2 - I_r) v_t^\top \| = \| s_t^2 - I_r\|$. Note that \sa{whenever ${\rm dist}(x_t, \Mcal)\leq\frac{1}{2}$, projection on to $\cM$ is well-defined\hj{, i.e., single-valued and Lipschitz continuous;} \rv{moreover,}} ${\rm dist}(x_t, \Mcal) = \|x_t - \Pcal_{\Mcal}(x_t)\| = \|u_ts_tv_t^\top - u_tv_t^\top\| = \|s_t - I_r\|$ and $\|s_t^2-I_r\| \geq\|s_t - I_r\|$, i.e., $\norm{c(x_t)}\geq {\rm dist}(x_t, \Mcal)$. Hence, we can conclude that for any \rv{$\rho\geq 16\Big(F(z_0)-
     \rv{\bar F+}(\bar l_y+l_h)D_Y\Big)$}, we have  ${\rm dist}(x_t, \Mcal)\leq \frac{1}{2}$ for all $t\geq 0$. 
    On the other hand, since $\cM$ is compact, for any $C>0$ such that $C>\frac{1}{2}+\sup_{x\in\cM}\norm{x}$, it must hold that $\|x_t\|<C$ for all $t\geq 0$. Thus, \cref{thm} implies that $\sum_{\rv{t=1}}^\infty\norm{\nabla_x\tilde f(x_t,y_t)}^2+ {\bar \kappa~{\rm dist}( 0, -\nabla_y\tilde f(x_t,y_t) + \partial h(y_t))^2}=\cO( \bar \kappa)$, and we have $\min\{\norm{\nabla_x\tilde f(x_t,y_t)}^2+{\bar \kappa{\rm dist}( 0, -\nabla_y\tilde f(x_t,y_t) + \partial h(y_t))^2}:\ t=1,\ldots,T\}=\cO(\bar \kappa/T)$ for all $T\geq 1$. 
    Therefore, for any $\epsilon>0$ given, the algorithm can generate $(x_\epsilon,y_\epsilon)\rv{\in X\times Y}$ such that $\norm{\nabla_x \tilde{f}(x_\epsilon,y_\epsilon)}\leq \epsilon$, ${\bar \kappa{\rm dist}( 0, -\nabla_y\tilde f(x_\epsilon,y_\epsilon) + \partial h(y_\epsilon)) \leq \epsilon}$, and  ${\rm dist}(x_\epsilon, \Mcal)\leq \frac{1}{2}$ within $\cO(\frac{\rv{\bar\kappa}}{\epsilon^2})$ iterations of \sm{}.
    Hence, \rv{setting $\rho\geq \max\{16\Big(F(z_0)-\rv{\bar F}+ (\bar l_y+l_h)D_Y\Big), \rv{36}\bar L_x\}$ where $\bar L_x:=\sup_{y\in Y}L_x(y)<\infty$ with $L_x(y):=\max\{\norm{\nabla_x f(x,y)}:\ \norm{x}_2\leq 1\}$ for $y\in Y$, and} invoking Lemma \ref{lem:equiv-stationary}  implies that $\| x_\epsilon - \Pcal_{\Mcal}(x_\epsilon)\| \leq \frac{3}{\rho} \epsilon$, $\|\sa{\grad_x} f(\Pcal_{\Mcal}(x_\epsilon), y_\epsilon)\| = \mathcal{O}(\epsilon)$, and ${\rm dist}(0, -\nabla_y f(\Pcal_{\Mcal}(x_\epsilon), y_\epsilon) + \partial h(y_\epsilon)) = \mathcal{O}(\epsilon)$. 
    
    \rv{Next, we argue for the asymptotic stationarity. According to the proof of \cref{thm}, for all $t\geq 1$, we have $G_t^x\in\nabla_x\tilde f(x_t,y_t)+\partial \delta_X(x_t)$ and $G_t^y\in -\nabla_y\tilde f(x_t,y_t)+\partial h(y_t)$. Since $\norm{x_t}<C$ for all $t\geq 1$, we also get $G_t^x=\nabla_x\tilde f(x_t,y_t)$ for all $t\geq 1$. Furthermore, \cref{thm} guarantees that $\sum_{t=1}^{+\infty}\norm{G_t^x}^2+\bar\kappa\norm{G_t^y}^2=\cO(\bar \kappa)$, which implies that $\nabla_x\tilde f(x_t,y_t)\to 0$ and $G^y_t\to 0$ as $t\to\infty$. 
    Moreover, for all $t\geq 1$, since ${\rm dist}(\bar x_t, \Mcal)\leq \frac{1}{2}$, Lemma~\ref{lem:equiv-stationary} implies that $\| x_t - \Pcal_{\Mcal}(x_t)\| \to 0$ and ${\grad_x} f(\Pcal_{\Mcal}( x_t), y_t) \to 0$. Since \sm{} sequence $\{(x_t,y_t)\}_{t\geq 0}\subset X\times Y$ is bounded, it has at least one limit point. Let $(x^*,y^*)$ be an arbitrary limit point of $\{(x_t, y_t)\}_{t\geq 1}$ and let $\{(x_{t_k},y_{t_k})\}_{k\geq 1}$ be a subsequence such that $(x_{t_k}, y_{t_k})\to(x^*,y^*)$ as $k\to\infty$. One can conclude that $x^*=\Pcal_{\Mcal}(x^*)\in\cM$ and ${\grad_x} f(x^*, y^*)=0$ since $\| x_{t_k} - \Pcal_{\Mcal}(x_{t_k})\|\to 0$ and ${\grad_x} f(\Pcal_{\Mcal}(x_{t_k}), y_{t_k}) \to 0$ as $k\to\infty$; furthermore, since $G_{t_k}^y\in -\nabla_y\tilde f(x_{t_k}, y_{t_k})+\partial h(y_{t_k})$ for all $k\geq 0$, $\nabla_y\tilde f(x_{t_k},y_{t_k})\to \nabla_y f(x^*,y^*)$ (this is due to $A(x^*)=x^*$ as $x^*\in\cM$) and $G_{t_k}^y\to 0$, it follows from \citep[Theorem 24.4]{rockafellar-1970a} that $0 \in - \nabla_y f(x^*, y^*) + \partial h(y^*)$.}

\end{proof}

\section{Proof of Theorem \ref{coro}}
\label{sec:proof-convergence}
\subsection{Proof of Lemma~\ref{lem:bounded-sequence}}
\label{sec:proof-bounded-sequence}
\rv{Since we select $z_0\in X$ and $\{x_t\}\subset X$ by the construction, through induction one can argue that $\{z_t\}\subset X$.} Moreover, \rv{from \eqref{eq:v-descent} in the proof of \cref{thm},} we know that $\{V_t\}_{t \geq 0}$ is non-increasing sequence where $V_t \triangleq \rv{\hat{f}_r}(x_t, y_t; z_t) - 2\rv{\Psi_r}(y_t; z_t) + 2 P(z_t)$. We clearly have $\rv{\hat{f}_r}(x_t, y_t; z_t) - \rv{\Psi_r}(y_t; z_t) \geq 0$ and $P(z_t) - \rv{\Psi_r}(y_t; z_t) \geq 0$ \sa{from weak duality, and we also have $P(z_t) - \rv{\Phi^*} \geq 0$ since $\Phi^*=\min_{z\in\cX}P(z)$ as $P(\cdot)$ is a Moreau envelope of $\Phi(\cdot)$}; hence, $V_0-\rv{\Phi^*}\geq V_t-\rv{\Phi^*}\geq 0$ for $t\geq 0$. Therefore, we can conclude that
\begin{equation}
\label{eq:potential-bound}
 0 \leq P(z_t) - \rv{\Psi_r}(y_t; z_t) \leq V_t- \rv{\Phi^*} \leq V_0 - \rv{\Phi^*}, \quad \forall t \geq 0.
\end{equation}
\rv{For any $z\in \cX$}, recall that $\rv{\Psi_r}(y; z) \triangleq \min_{x \in X} \rv{\tilde{f}_r}(x, y) + \frac{{p}}{2} \|x - z\|^2$, and that $\rv{\tilde{f}_r}(x, \cdot)$ is $\mu$-concave over $\cY$ for all $x\in X$; thus, it follows that $\rv{\Psi_r(\cdot; z)}$ is $\mu$-concave. Indeed, for any $z\in\cX$ and $y\in \cY$,
\[
\rv{\Psi_r}(y; z) + \frac{\mu}{2} \|y\|^2 = \min_{x \in X} 
H(x,y;z), \quad H(x,y;z):=\rv{\tilde{f}_r}(x, y) + \frac{\mu}{2} \|y\|^2 + \frac{\hj{p}}{2} \|x - z\|^2,
\]
and $H(x,\cdot;z)$ is concave for all $x\in X$ and $z\in \cX$. Since the pointwise infimum of concave functions is concave, $\rv{\Psi_r}(y; z) + \frac{\mu}{2} \|y\|^2$ is concave in $y$; hence, $\rv{\Psi_r}(\cdot; z)$ is $\mu$-concave for all $z\in \cX$. Note that for any $t \ge 0$, we have 
\begin{equation}
\label{eq:dual-sol-subopt}
\begin{aligned}
&P(z_t) = \max_{y\in\cY} \rv{\Psi_r}(y; z_t) = \rv{\Psi_r}(y^*(z_t); z_t)
\ \Rightarrow\ \frac{\mu}{2} \| y_t - y^*(z_t) \|^2 
\leq P(z_t) - \rv{\Psi_r}(y_t; z_t) 
\leq V_0 - \rv{\Phi^*}.
\end{aligned}
\end{equation}
Moreover, it follows from
$
x^*(z) \triangleq \arg \min_{x \in X} \Phi(x; z) 
= \arg \min_{x \in X} \max_{y\in\cY} \rv{\hat{f}_r}(x, y;z)$ and the definition of \rv{$r^*(\cdot)$} that $x^*(z)=\arg\min_{x\in X}\rv{\hat{f}_r}(x, r^*(x);z)$, which leads to
$P(z) = \rv{\Phi(x^*(z); z)} = \rv{\hat{f}_r}(x^*(z), r^*(x^*(z));z)$. Finally, since $(x^*(z), y^*(z))$  is the unique saddle point of  $\rv{\hat{f}_r}(x, y; z)$, we can conclude that $y^*(z) = r^*(x^*(z))$ for all $z\in \cX$. 

It is known that $r^*(\cdot)$ is $\kappa_{yx}$-Lipschitz with $\sa{\kappa_{yx} := \frac{l_{yx}}{\mu}}$, e.g., see \citep[Lemma A.4]{zhang2024agda+}. Consequently, we have
\[
\|y^*(z_t) - y^*(z_0)\| = \|r^*(x^*(z_t)) - r^*(x^*(z_0))\| \leq \sa{\kappa_{yx}} \|x^*(z_t) - x^*(z_0)\|.
\]
Furthermore, using \cref{lem:lip}, we get 
$
\|x^*(z) - x^*(z')\| \leq \frac{p}{p -l} \|z - z'\|$, which implies that $\|y^*(z_t) - y^*(z_0)\| \leq \sa{\kappa_{yx}}\frac{p}{p - l} \|z_t - z_0\|$. Combining this bound with \eqref{eq:dual-sol-subopt} and using triangular inequality leads to $\norm{y_t-y^*(z_0)}\leq \sqrt{\frac{2}{\mu}(V_0-\rv{\Phi^*})}+2\kappa_{yx}\norm{z_t-z_0}$ for all $t\geq 0$. 
Since \sm{} is initialized from $(x_0,y_0,z_0)$ as in Theorem~\ref{thm:mu-pl}, according to the proof of Theorem~\ref{thm:mu-pl}, one has $V_0\leq F(z_0)$, which completes the proof of Lemma~\ref{lem:bounded-sequence}.
\subsection{Proof of Lemma~\ref{lem:delta}}
\label{sec:proof-delta}
    \sa{The first result directly follows from 
    \citep[Lemma E.4]{zhang2024agda+}. Moreover, \cref{thm} implies that $\sum_{t=0}^{\infty}\norm{x_{t+1}-x_t}^2=\cO(\kappa\tau_1^2)$, and $\tau_2<\frac{1}{l}\leq\frac{1}{\mu}$ implies that $1-\tau_2\mu/2\in (0,1)$, which implies that $\sum_{t=0}^{\infty}\delta_t<\infty$; hence, $\delta_t\to 0$.}

\rv{Now we are ready to prove Theorem \ref{coro}.}
\begin{proof}[Proof of Theorem \ref{coro}]
    \sa{\cref{lem:bounded-sequence} implies that $\{x_t,y_t,z_t\}$ stays in a bounded set; therefore, it has at least one limit point $(x^*,y^*,z^*)$.} Consider the potential function values at $(x_t,y_t,z_t)$, i.e., $V_t= \rv{\hat{f}_r}(x_t, y_t; z_t) - 2\rv{\Psi_r}(y_t;z_t) + 2P(z_t)$ for $t\geq 0$. It follows from \eqref{eq:potential-bound} that for all $t\geq 0$, we have $\rv{\hat{f}_r}(x_t,y_t;z_t)  \leq V_t\leq V_0$ where we used $P(z_t)\geq \rv{\Psi_r(y_t;z_t)}$. 
    
    \rv{Let $\bar Y\subset\cY$ denote a compact set for which $\{y_t\}_{t\geq 0}\subset \bar Y$. Define $\bar l_y:= \max_{x\in X, y \in \bar{Y}} \| \nabla_y f(A(x),y) \|$ and let $l_h$ be the Lipschitz constant of $h$ over $\dom h\cap \bar{Y}$ --Assumption~\ref{assum:Y}.\textbf{(i)} implies that such $l_h$ exists.} 
    Then, \rv{from the same arguments we used for \eqref{eq:low-boun-lip},} we get
    \begin{equation} \label{eq:low-boun-lip-2}
        \Phi(x_t)+\frac{p}{2}\norm{x_t-z_t}^2-{(\bar l_y + l_h)}\norm{y_t-r^*(x_t)}\leq \hat{f}_r(x_t,y_t;z_t). 
    \end{equation}
    Therefore, for all $t\geq 0$, it holds that $\Phi(x_t)\leq V_0+{(\bar l_y + l_h)}\sqrt{\delta_t}$. 
    Thus, from \cref{rem:Phi-connection}, we get $\rv{F}(A(x_t))+\frac{\rho}{4}\norm{c(x_t)}^2\leq V_0+{(\bar l_y + l_h)}\sqrt{\delta_t}$ for $t\geq 0$. Note that 
    $\rv{F}(A(x_t))\geq 
    \rv{\bar F}$ and we get
    \begin{align}
        \norm{c(x_t)}^2\leq \frac{4}{\rho}\left(V_0-
        \rv{\bar F}+(\bar l_y + l_h)\sqrt{\delta_t}\right),\quad\forall~t\geq 0.
    \end{align}
    \rv{Since \sm{} is initialized from $(x_0,y_0,z_0)$ as in Theorem~\ref{thm:mu-pl}, we have $V_0=P(z_0)\leq \rv{F}(z_0)$, which is independent of the parameter $\rho>0$.} Suppose we fix $\rho>32(\rv{F}(z_0)-
    \rv{\bar F})$. Recall that $\delta_t\to 0$ as $t\to\infty$; therefore, there exists $\bar T_\rho\in\mathbb{Z}_+$ such that $\delta_t\leq \frac{\rho^2}{1024(\bar l_y+l_h)^2}$ for all $t\geq \bar T_\rho$. Thus, for all $t\geq \bar T_\rho$, we can conclude that $\norm{c(x_t)}\leq \frac{1}{2}$. 
    
     For any fixed $t\geq \bar T_\rho$, let $x_t = u_ts_tv_t^\top$ be the compact singular value decomposition of $x_t$. Then, $\norm{c(x_t)}=\|x_t^\top x_t - I_r\| = \| v_t (s_t^2 - I_r) v_t^\top \| = \| s_t^2 - I_r\|$. Note that \sa{whenever ${\rm dist}(x_t, \Mcal)\leq\frac{1}{2}$, projection on to $\cM$ is well-defined\hj{, i.e., single-valued and Lipschitz continuous,} and} ${\rm dist}(x_t, \Mcal) = \|x_t - \Pcal_{\Mcal}(x_t)\| = \|u_ts_tv_t^\top - u_tv_t^\top\| = \|s_t - I_r\|$ and $\|s_t^2-I_r\| \geq\|s_t - I_r\|$, i.e., $\norm{c(x_t)}\geq {\rm dist}(x_t, \Mcal)$. Hence, we can conclude that for any $\rho\geq 32(\rv{F}(z_0)-
     \rv{\bar F})$, we have  ${\rm dist}(x_t, \Mcal)\leq \frac{1}{2}$ for all $t\geq \bar T_\rho$.  
    On the other hand, since $\cM$ is compact, for any $C>0$ such that $C>\frac{1}{2}+\sup_{x\in\cM}\norm{x}$, it must hold that $\norm{x_t}<C$ for all $t\geq \bar T_\rho$. Thus, \cref{thm} implies that $\sum_{t=\bar T_\rho}^\infty\norm{\nabla_x\tilde f(x_t,y_t)}^2+\mod{\bar \kappa{\rm dist}( 0, -\nabla_y\tilde f(x_t,y_t) + \partial h(y_t))^2} =\cO(\bar \kappa)$, and we have $\min\{\norm{\nabla_x\tilde f(x_t,y_t)}^2+\mod{\bar \kappa{\rm dist}( 0, -\nabla_y\tilde f(x_t,y_t) + \partial h(y_t))^2}:\ t=\bar T_\rho,\ldots,\bar T_\rho+T-1\}=\cO(\bar \kappa/T)$ for all $T\geq 1$. 
    Therefore, for any $\epsilon>0$ given, the algorithm can generate $(x_\epsilon,y_\epsilon)$ such that $\norm{\nabla_x \tilde{f}(x_\epsilon,y_\epsilon)}\leq \epsilon$, $\mod{\bar \kappa{\rm dist}( 0, -\nabla_y\tilde f(x_\epsilon,y_\epsilon) + \partial h(y_\epsilon)) \leq \epsilon}$, and  ${\rm dist}(x_\epsilon, \Mcal)\leq \frac{1}{2}$ within $\bar T_\rho+\cO(\frac{1}{\epsilon^2})$ iterations of \sm{}. \sa{Moreover, since $
    \rv{\bar F}\leq\rv{\Phi^*}$ and $\{z_t\}\subset X$, it follows from \cref{lem:bounded-sequence} that $y_t\in Y_c:=\{y\in\cY: \norm{y-y_0}\leq \sqrt{\frac{2}{\mu}(\rv{F}(z_0)-
    \rv{\bar F})}+4\kappa_{yx}C\}$ for all $t\geq 0$ and note that the bound does not depend on $\rho$ parameter \rv{since $l_{yx}$ and so is $\kappa_{yx}=l_{yx}/\mu$ independent of $\rho$.} Hence, setting $\rho\geq \max\{32(\rv{F}(z_0)-
    \rv{\bar F}), \rv{36}\bar l_x\}$ where
    $\bar l_x:=\sup_{y\in Y_c}l_x(y)<\infty$, and} invoking Lemma \ref{lem:equiv-stationary} implies that $\| x_\epsilon - \Pcal_{\Mcal}(x_\epsilon)\| \leq \frac{3}{\rho} \epsilon$, $\|\sa{\grad_x} f(\Pcal_{\Mcal}(x_\epsilon), y_\epsilon)\| = \mathcal{O}(\epsilon)$, and $\mod{{\rm dist}( 0, -\nabla_y\tilde f(x_\epsilon,y_\epsilon) + \partial h(y_\epsilon))} = \mathcal{O}(\epsilon)$. Moreover, for any limit point $(x^*,y^*)$ of the \sm{} sequence $(x_t,y_t)$, it holds that $x^*\in\cM$, $\grad_x f(x^*, y^*) =0$ and $0 \in - \nabla_y f(x^*, y^*) + \partial h(y^*)$.
\end{proof}

\section{Proof of Theorem \ref{thm:concave}}
\label{sec:proof-concave}
Given arbitrary $x_0 \in \Mcal$ and $y_0\in Y$. 
We use \rv{a slightly modified version\footnote{\rv{The potential function we use in this paper also involves the closed convex function $h(\cdot)$.}} of} the potential function defined in \citep[Lemma 3.4]{xu2023unified}, \rv{i.e., $V_0:=f_r(x_0,y_0)$ and for all $t\geq 0$, }
\begin{align}
\label{eq:V-function}
V_{t+1} & :=\rv{\tilde{f}_r}\left(x_{t+1}, y_{t+1}\right) + \frac{1}{2 \tau_2}\left( \frac{16}{\tau_2 \theta_{t+1}} -15 \right) \left\|y_{t+1}-y_t\right\|^2 + \left( \frac{8}{\tau_2}\left(1-\frac{\theta_t}{\theta_{t+1}}\right)- \frac{\theta_t}{2}\right) \left\|y_{t+1}\right\|^2.
\end{align}
\rv{For $t\geq 0$, recall that} $y_{t+1}
= \prox_{\tau_2 h} \Big(
    y_t + \tau_2 \nabla_y \tilde f (x_{t+1},y_t) - \tau_2 \theta_t y_t \Big)$; hence,  it holds that
$0 \in \frac{1}{\tau_2}(y_{t+1}-y_t)
   - (\nabla_y \tilde f(x_{t+1},y_t) - \theta_t y_t)
   + \partial h(y_{t+1})$, which also implies 
   \[ \rv{\cG_{t+1}^y}:=\frac{1}{\tau_2}(y_{t+1}-y_{t})+
   \nabla_y \tilde f(x_{t+1},y_{t+1}) - \nabla_y \tilde f(x_{t+1},y_t) + \theta_t y_t \in  \nabla_y \tilde f(x_{t+1},y_{t+1})
   - \partial h(y_{t+1}). \]
Then, $D^y_{t+1}:=  {\rm dist}\Big(0, - \nabla_y \tilde f(x_{t+1},y_{t+1})
   + \partial h(y_{t+1})\Big) \leq \rv{\norm{\cG_{t+1}^y}}\leq (\frac{1}{\tau_2}+l) \| y_{t+1}-y_{t}\| + \theta_t \|y_t\|$ for all $t\geq 0$. 
Similarly, for $t\geq 0$, since $x_{t+1} = \Pcal_X\Big(x_t - \tau_{1,t} \nabla_x \tilde f(x_t,y_{t})\Big)$, it holds
$ 0 \in \frac{1}{\tau_{1,t}} (x_{t+1} - x_t) + \nabla_x \tilde f(x_t,y_t) + \partial \delta_X(x_{t+1})$; therefore, we get
\[ \rv{\cG_{t+1}^x}:=\frac{1}{\tau_{1,t}}(x_{t}- x_{t+1}) + \nabla_x \tilde f(x_{t+1}, y_{t+1}) - \nabla_x \tilde f(x_{t}, y_{t}) \in \nabla_x \tilde f(x_{t+1}, y_{t+1}) + \partial \delta_X(x_{t+1}).   \]
Then, $D_{t+1}^x := {\rm dist}\Big(0, \nabla_x \tilde f(x_{t+1}, y_{t+1}) + \partial \delta_X(x_{t+1})\Big)\leq \norm{\cG_{t+1}^x}\leq (\frac{1}{\tau_{1,t}}+l)\|x_{t+1} - x_t\| + l \|y_{t+1} - y_t\|$ for all $t\geq 0$. Thus, combining the two inequality above, we get for all $t\geq 0$ that
\begin{equation}
\label{eq:Dt}
    \begin{aligned}
    D_{t+1}^2 
    &:= (D_{t+1}^x)^2 + (D_{t+1}^y)^2 \leq \rv{\norm{\cG_{t+1}^x}^2 + \norm{\cG_{t+1}^y}^2}\\ 
    & \leq
    2\Big(\frac{1}{\tau_{1,t}} + l\Big)^2 \|x_{t+1} - x_t\|^2 + 2\Big(l^2 + \Big(\frac{1}{\tau_2} + l\Big)^2 \Big)\|y_{t+1} - y_t\|^2 + 2\theta_t^2 \|y_t\|^2.
    \end{aligned} 
\end{equation}
\rv{Given an arbitrary $\tilde b >\max\{ \frac{1}{16}\frac{19^2}{20^2}\cdot (\frac{2}{\tau_2 l}-1),~2\}$, and for all $t\geq 0$ let $\alpha_t := \frac{8(\tilde b - 2)l^2}{\tau_2 \theta_t^2}$ and $\beta_t:= \tau_2 l^2 + \frac{16 l^2}{\tau_2 \theta_t^2} \tilde b - l$. Note that since $\theta_t = \frac{19}{20}\cdot\frac{1}{\tau_2}\cdot
    \rv{\frac{1}{(t+1)^{1/4}}}$, the definition of $\tilde b$ implies that $\beta_t>l$ for $t\geq 0$.
Using these definitions, we get the following identities: 
\begin{equation}
\label{eq:alpha-identity}
    \tau_{1,t}=(\beta_t+l/2)^{-1},\quad (\beta_t+l)/2=\alpha_t+\tau_2 l^2/2+\frac{16 l^2}{\tau_2\theta_t^2},\quad\forall~t\geq 0.
\end{equation} 
Moreover, let  
$d_1 := \frac{8 \tilde{b}^2}{(\tilde b  -2 )^2} + \frac{1}{32}\frac{19^4}{20^4}\cdot\frac{(\tau_2 - \frac{1}{2l})^2 + 1 }{ \tau_2^2 (\tilde b -2)^2 l^2}$.} Then, it follows from \cite[Equation (3.52)]{xu2023unified} that
\begin{equation}
\label{eq:d1}
    \frac{2(\frac{1}{\tau_{1,t}} + l)^2}{\alpha_t^2} \leq \frac{(2\beta_t + l)^2 + 4l^2}{\alpha_t^2} \leq 4 d_1, \quad\rv{\forall~t\geq 0}.
\end{equation}
\rv{Therefore, \eqref{eq:Dt} and \eqref{eq:d1} together imply that}
\[ D_{t+1}^2 \leq \rv{\norm{\cG_{t+1}^x}^2 + \norm{\cG_{t+1}^y}^2}\leq 4 d_1 \alpha_t^2 \|x_{t+1} - x_t\|^2 + \Big(\frac{4}{\tau_2^2} + 6l^2\Big)\|y_{t+1} - y_t\|^2 + 2\theta_t^2 \|y_t\|^2, \quad\rv{\forall~t\geq 0}. \]	
In addition, from \cite[Equation (3.48)]{xu2023unified}, \rv{for all $t\geq 1$,} we have 
\begin{equation}
\label{eq:potential-recursion-1}
\begin{aligned}
    &\alpha_t \|x_{t+1} - x_t\|^2 + \frac{9}{10\tau_2} \|y_{t+1} - y_t\|^2 \leq V_t - V_{t+1} + B_t,\\ 
    &B_t:=\frac{8}{\tau_2}\Big(\frac{\theta_{t-1}}{\theta_t} - \frac{\theta_t}{\theta_{t+1}} \Big) \|y_{t+1}\|^2 + \frac{\rv{\theta_{t-1}} - \theta_t}{2} \|y_{t+1}\|^2.
    \end{aligned}
\end{equation}
\rv{Next, we upper bound $V_1-V_0$; the analysis in \cite{xu2023unified} does not require this bound; however, to establish asymptotic stationarity we need this bound as $V_0=f_r(x_0,y_0)$ does not depend on $\rho$ whenever $x_0\in\cM$ and this observation is essential for our analysis. Indeed, we first bound $\tilde f(x_1,y_1)-\tilde f(x_0,y_0)$ using the following two inequalities:
    \begin{align}
        &\tilde f(x_1,y_0)-\tilde f(x_0,y_0) \leq \Big(\frac{l}{2}-\frac{1}{\tau_{1,0}}\Big)\norm{x_2-x_1}^2\leq -\frac{\beta_0+l}{2} \norm{x_1-x_0}^2, \label{eq:x0-step}\\
        &\tilde f_r(x_1,y_1) - \tilde f_r(x_1,y_0) \leq \fprod{\nabla_y \tilde f(x_1,y_0)-g_0,~y_1-y_0}\leq \frac{1}{2\tau_2}\norm{y_1-y_0}^2+\tau_2(l^2_y(y_0)+l_h^2),
        \label{eq:y0-step}
    \end{align}
    where $g_0\in\partial h(y_0)$, $\bar l_y(y_0):=\max_{x \in X} \|\nabla_y f(A(x),y_0) \|<\infty$, and $l_h$ is the Lipschitz constant of $h$. The inequality in \eqref{eq:x0-step} follows from the similar arguments we used in~\eqref{eq:p-descent-1} and \eqref{eq:p-descent-2}; furthermore, \eqref{eq:y0-step} follows from concavity of $f_r(x_1,\cdot)$ and using Young's inequality. Therefore, summing \eqref{eq:x0-step} and \eqref{eq:y0-step}, we get
    $$\tilde f_r(x_1,y_1)-\tilde f_r(x_0,y_0)\leq -\frac{\beta_0+l}{2} \norm{x_1-x_0}^2+\frac{1}{2\tau_2}\norm{y_1-y_0}^2+\tau_2(l^2_y(y_0)+l_h^2);$$
    hence, using the definition of $V_1$ given in \eqref{eq:V-function} together with $V_0:=\tilde f_r(x_0,y_0)=\tilde f_r(x_0,y_0)$ for $x_0\in\cM$, it follows that
    $$ 
    V_1-V_0\leq -\frac{\beta_0+l}{2} \norm{x_1-x_0}^2+\Big(\frac{1}{2\tau_2}+\frac{8}{\tau_2^2\theta_1}-\frac{15}{2\tau_2}\Big)\norm{y_1-y_0}^2+\left( \frac{8}{\tau_2}\left(1-\frac{\theta_0}{\theta_{1}}\right)- \frac{\theta_0}{2}\right) \left\|y_{1}\right\|^2+\tau_2(l^2_y(y_0)+l_h^2).
    $$
    Note that from \eqref{eq:alpha-identity}, we have $\frac{\beta_0+l}{2}\geq \alpha_0$; moreover, since $\frac{8}{\tau_2^2\theta_1}\leq \frac{13}{10\tau_2}$, we also have $\frac{9}{10\tau_2}+\frac{1}{2\tau_2}+\frac{8}{\tau_2^2\theta_1}-\frac{15}{2\tau_2}\leq 0$, and 
    finally we also observe that $1-\frac{\theta_0}{\theta_1}<0$; therefore, after using these bounds in the above inequality and dropping the non-positive terms from the right hand side, we get
    \begin{equation}
    \label{eq:potential-recursion-2}
    \alpha_0 \|x_{1} - x_0\|^2 + \frac{9}{10\tau_2} \|y_{1} - y_0\|^2 \leq V_0 - V_{1} +B_0,\quad\mbox{where}\quad B_0:=\tau_2(l^2_y(y_0)+l_h^2).  
    \end{equation}}%

	\rv{For $t\geq 0$, we have $\rv{\norm{\cG_{t+1}^x}^2 + \norm{\cG_{t+1}^y}^2} \leq A_t (V_t-V_{t+1}+B_t) + 2 \theta_t^2 \|y_t\|^2$ for $A_t:= \max\{ 4d_1 \alpha_t, \frac{10}{9} (\frac{4}{\tau_2} + 6l^2 \tau_2)\}$. Therefore, we can conclude that
    \begin{equation}
    \label{eq:G-bound}
        \sum_{t=0}^{T-1}\frac{1}{A_t}\Big(\norm{\cG_{t+1}^x}^2 + \norm{\cG_{t+1}^y}^2\Big)\leq \sum_{t=0}^{T-1} \Big(V_t-V_{t+1}+B_t + \frac{2\theta_t^2 }{A_t} \|y_t\|^2 \Big),\qquad \forall~T\geq 1.
    \end{equation}}%
    Note that from \eqref{eq:V-function}, we can lower bound the potential value at time $t+1$ as follows:
    \begin{equation}
        V_{t+1}\geq \tilde f_r(x_{t+1},y_{t+1}) -\frac{15}{2\tau_2}D_Y^2+\frac{8}{\tau_2}(1-\frac{\theta_0}{\theta_1})R_Y^2-\frac{\theta_0}{2}R_Y^2,\qquad \forall~t\geq 0,
    \end{equation}
    where $D_Y:=\max_{y,y' \in Y} \|y-y'\|$ and $R_Y:=\max_{y \in Y} \|y\|$; therefore, for $\underline{f} := \inf_{x \in X, y \in Y} f_r(A(x),y)$, we have $V_{t+1}\geq \underline{V}:=\underline{f} -\frac{15}{2\tau_2}D_Y^2+\frac{8}{\tau_2}(1-\frac{\theta_0}{\theta_1})R_Y^2-\frac{\theta_0}{2}R_Y^2$. Thus, for any $T\geq 1$, we have
    \begin{equation}
        \begin{aligned}
            \sum_{t=0}^{T-1}(V_t-V_{t+1})=V_0-V_{T}\leq V_0-\underline{V}; 
        \end{aligned}
    \end{equation}
    moreover, using the definition of $\{B_t\}_{t\geq 0}$ given in \eqref{eq:potential-recursion-1} and \eqref{eq:potential-recursion-2}, it follows that for any $T\geq 1$, we also have
    \begin{equation}
    \label{eq:B-sum}
        \begin{aligned}
        \sum_{t=0}^{T-1}B_t
        &=\frac{8}{\tau_2}\Big(\frac{\theta_0}{\theta_1}-\frac{\theta_{T-1}}{\theta_{T}}\Big)R_Y^2+\frac{\theta_0-\theta_{T-1}}{2}R_Y^2+\tau_2(l^2_y(y_0)+l_h^2),\\
        &\leq \Big(\frac{8}{\tau_2}\frac{\theta_0}{\theta_1}+\frac{\theta_0}{2}\Big)R_Y^2+\tau_2(l^2_y(y_0)+l_h^2).
        \end{aligned}
    \end{equation}
    Therefore, we can conclude that for all $T\geq 1$,
    \begin{equation}
    \label{eq:V-sum}
        \begin{aligned}
        \sum_{t=0}^{T-1}(V_t-V_{t+1}+B_t) 
        &\leq f_r(x_0,y_0)-\underline{f}+\frac{15}{2\tau_2}D_Y^2 +\Big(\frac{16}{\tau_2}\frac{\theta_0}{\theta_1}-\frac{8}{\tau_2}+\theta_0\Big)R_Y^2+\tau_2(l^2_y(y_0)+l_h^2)\\
        &\leq f_r(x_0,y_0)-\underline{f} +\frac{15}{2\tau_2}D_Y^2
        +\left(8 \cdot \frac{3}{2} + 1 \right) \frac{R_Y^2}{\tau_2}+\tau_2(l^2_y(y_0)+l_h^2):=d_2,
    \end{aligned}
    \end{equation}
    where in the last inequality we used $2\frac{\theta_0}{\theta_1}-1=2\cdot 2^{1/4}-1\leq \frac{3}{2}$. Note that $d_2$ is independent of the penalty parameter $\rho$.
	
	\rv{Let $\bar{A} := \max\{ 4d_1,~ \frac{10}{9} (\frac{4}{\tau_2} + 6l^2 \tau_2) \frac{1}{\alpha_0}\}$. Then, ${A_t} \leq {\bar{A}} \cdot{\alpha_t}$ for all $t\geq 0$; hence, \eqref{eq:G-bound} implies that for all $T\geq 1$,
	\[ \sum_{t=0}^{T -1} \frac{\norm{\cG_{t+1}^x}^2 + \norm{\cG_{t+1}^y}^2}{\alpha_t} \leq \bar{A} d_2 + 2\bar{A}R_Y^2\cdot \sum_{t=0}^{T -1} \frac{\theta_t^2}{A_t},\quad\mbox{and}\quad \sum_{t=0}^{T -1} \frac{\theta_t^2}{A_t}\leq \frac{1}{4d_1}\sum_{t=0}^{T -1}\frac{\theta_t^2}{\alpha_t}=\frac{\tau_2}{32d_1(\tilde b-2)l^2}\sum_{t=0}^{T -1}\theta_t^4. \] 
    Thus, $\sum_{t=0}^{T -1} \frac{\theta_t^2}{A_t}=\cO(1)\sum_{t=0}^{T -1}\frac{1}{t+1}=\cO(\log(T)+1)$.} Moreover, noting that $\sum_{t=0}^{T -1}  \frac{1}{\alpha_t} = \Omega(\sqrt{T})$, we have
    \begin{equation}
        \min_{t=0,\ldots,T-1}\{\norm{\cG_{t+1}^x}^2 + \norm{\cG_{t+1}^y}^2\}=\cO\Big(\log(T)/\sqrt{T}\Big),\qquad\forall~T\geq 1.
    \end{equation}
    Moreover, for any given $\epsilon>0$, define $T_\epsilon:= \inf\{t \geq 0: \rv{D_{t+1}} \leq \epsilon\}$, for which it holds that
	\begin{equation}
    \label{eq:frac-At-sum}
	    \epsilon^2\ \Omega(\sqrt{T_\epsilon}) = \sum_{t=0}^{T_\epsilon -1} \frac{\epsilon^2}{\alpha_t} \leq \sum_{t=0}^{T_\epsilon -1} \frac{1}{\alpha_t} D_{t+1}^2 \leq \sum_{t=0}^{T_\epsilon -1} \frac{\norm{\cG_{t+1}^x}^2 + \norm{\cG_{t+1}^y}^2}{\alpha_t} = \mathcal{O}(1 + \log T_\epsilon);
	\end{equation}
	therefore, within $T_\epsilon = \mathcal{O} (\frac{1}{\epsilon^4}\log^2(\frac{1}{\epsilon}))$ iterations, \sm{} will return a pair $(x_\epsilon, y_\epsilon)$ satisfying 
    \be \label{eq:eps-station-mere-concave} {\rm dist}^2\Big(0, \nabla_x \tilde f(x_\epsilon, y_\epsilon) + \partial \delta_X(x_\epsilon)\Big) + {\rm dist}^2\Big(0, -\nabla_y \tilde f(x_\epsilon, y_\epsilon) + \partial h(y_\epsilon)\Big) \leq \epsilon^2. \ee 

    To establish the stationarity of $(x_\epsilon,y_\epsilon)$ for the original problem \eqref{prob}, we need to argue that the norm constraint $x\in X$ is never active. 
    Indeed, from \eqref{eq:potential-recursion-1}, \eqref{eq:potential-recursion-2} and \eqref{eq:B-sum}, it follows that $V_{T}\leq V_0+\sum_{t=0}^{T-1}B_t\leq f_r(x_0,y_0)+\Big(\frac{8}{\tau_2}\frac{\theta_0}{\theta_1}+\frac{\theta_0}{2}\Big)R_Y^2+\tau_2(l^2_y(y_0)+l_h^2)$.
    Thus, for $t\geq 0$, \rv{since $\{\theta_t/\theta_{t+1}\}_{t\geq 0}$ is decreasing
    and $\theta_t\leq \frac{1}{\tau_2}$, \eqref{eq:V-function} implies that}
\begin{equation}
    \begin{aligned}
    \rv{\tilde{f}_r(x_{t+1}, y_{t+1})} 
    &\leq \rv{V_{t+1}} + \frac{15}{2 \tau_2} \left\|y_{t+1}-y_t\right\|^2 + \Big(\rv{\frac{8}{\tau_2}\frac{\theta_t}{\theta_{t+1}}+}\frac{\theta_t}{2}\Big)\left\|y_{t+1}\right\|^2 \\
    &\leq f_r(x_0,y_0) + \frac{15}{2\tau_2}D_Y^2+\Big(\frac{16}{\tau_2}\frac{\theta_0}{\theta_1}+{\theta_0}\Big)R_Y^2+\tau_2(l^2_y(y_0)+l_h^2):=\rv{\bar f}.
    \end{aligned}
\end{equation} 

\rv{Using a similar argument that we adopted for deriving \eqref{eq:low-boun-lip}, for any $y^*_t\in\argmax_{y\in\cY}\rv{\tilde{f}_r(x_t,y)}$,} it holds that
\[ \Phi(x_t):= \argmin_{y \in \rv{\cY}} \rv{\tilde{f}_r(x_t,y)} \leq \rv{\tilde{f}_r(x_t, y_t)} + (\bar l_y + l_h)\|y_t - \rv{y^*_t}\| \leq \bar f + (\bar l_y+l_h) D_Y:=\rv{\bar\Phi}, 
\]
where $\bar l_y:=\max_{x \in X, y \in Y} \|\nabla_y f(A(x),y) \|<\infty$; therefore,
since $\Phi(x_t) = \rv{F}(A(x_t)) + \frac{\rho}{4}\|c(x)\|^2$ and $\min_{x \in 
\rv{X}} \rv{F}(A(x)) 
\rv{=\bar F>-\infty}$, we have
\[ \|c(x_t)\|^2 \leq \frac{\rv{4}}{\rho}\Big(\bar\Phi-\rv{\bar F}\Big).  \]
Then, for $\rho > \rv{16}(\bar\Phi
-\rv{\bar F})=16(\bar f + (\bar l_y+l_h) D_Y-\bar F)$, we have $\|c(x_t)\| \leq \frac{1}{2}$, which further implies $\|x_t - \Pcal_{\Mcal}(x_t)\| \leq \frac{1}{2}$. \rv{Thus, whenever $C>0$ is sufficiently large,
$x_t\in {\rm int}(X)$ for all $t\geq 0$.} Then, \eqref{eq:eps-station-mere-concave} reads
\[ \| \nabla_x \tilde f(x_\epsilon, y_\epsilon)\|^2 + {\rm dist}^2\Big(0, -\nabla_y \tilde f(x_\epsilon, y_\epsilon) + \partial h(y_\epsilon)\Big)  \leq \epsilon^2.  \]
Thus, we can conclude that $(\xe,\ye)$ is indeed $\cO(\epsilon)$-stationary point of the NCMC minimax problem in~\eqref{prob} by invoking Lemma \ref{lem:equiv-stationary} for $\rho>0$ sufficiently large, i.e., $\rho \geq \max\{ 16(\rv{\bar\Phi}-\rv{\bar F}), \rv{36} \max_{y \in Y} L_x(y) \}$, where $\rv{L_x(y)}:= \max\{\|\nabla_x f(x,y)\|:\ \|x\|_2 \leq 
    \rv{1}\}$. Moreover, define $\{(\bar x_t,\bar y_t)\}_{t\geq 0}$ such that $(\bar x_t,\bar y_t)=(x_{T(t)},y_{T(t)})$ where $T(t):=\argmin\{\norm{\cG_{k+1}^x}^2 + \norm{\cG_{k+1}^y}^2:\ k=0,\ldots, t-1\}$ defined for all $t\geq 1$. Since $\{(\bar x_t,\bar y_t)\}_{t\geq 0}$ is a bounded sequence, it has at least one limit point, and any of its limit points is a stationary point of stationary point of the NCMC minimax problem in~\eqref{prob}.

\section{Addition experiments} \label{append:num}
Here we provide additional experiments on 
superquantile-based learning. 
Indeed, we focus on distributionally robust optimization (DRO) over Riemannian manifold. Given a set of data samples $\{\xi_i\}_{i=1}^n$, the DRO over Riemannian manifold $\mathcal{M}$ can be written as the following minimax problem:
\begin{equation}
\min_{x \in \mathcal{M}} \max_{w \in \mathcal{S}} \left\{ \sum_{i=1}^n w_i \ell(x; \xi_i) -  \alpha \|w - \frac{\bm{1}}{n}\|^2 \right\}, \label{prob:6}
\end{equation}
where $\alpha > 0$ denotes the coefficient, $w = (w_1, \cdots, w_n)$, $\mathcal{S} = \left\{ w \in \mathbb{R}^n : \sum_{i=1}^n w_i = 1, \, w_i \geq 0 \right\}$. Here $\ell(x; \xi_i)$ denotes the loss function over the Riemannian manifold $\mathcal{M}$, which applies to many machine learning problems such as ICA \citep{ijcai2021p345}, dictionary learning \citep{sun2016complete}, neural network training \citep{huang2023gradient}, structured low-rank matrix learning \citep{deng2023decentralized}, among others. For example, the task of PCA can be cast on a Grassmann manifold. 

In the experiment, we use Stiefel manifold $\mathcal{M} = \mathrm{St}(r,d) = \left\{ X \in \mathbb{R}^{d \times r} \; : \; X^\top X = I_r \right\}$ on parameters $x$ of DNNs (convolution layers and linear layers), see Table \ref{tab:dnn_architecture} for details. Different algorithms are tested on CIFAR-10, CIFAR-100, STL-10, Fashion MNIST, and MNIST datasets.  We set $\tau_1 = \tau_2 = 10^{-3}$, $\beta =0.9$, $p = 1$, $\rho = 10, C = 1000$ for \sm{} with the same $\tau_1$ and $\tau_2$ for MGDA and RSGDA. 
The batch size is 512, and the model is trained for 200 epochs. The results are listed in Figure \ref{fig:datasets}, where the primal loss denotes $\rv{F}(x)$.  It is shown that \sm{} not only converges the fastest but also has the highest test accuracy compared with other tested algorithms. Furthermore, the gradient norm and primal loss are also the lowest compared with the tested algorithms. The final test accuracy of the compared algorithms are list in Table \ref{tab:test_accuracy}. It is shown that \sm{} has the highest test accuracy compared with the other two algorithms.

\begin{table}[ht]
\centering
\small
\setlength{\tabcolsep}{8pt}
\caption{Test accuracy (\%) of different algorithms on five datasets after 200 epochs.}
\begin{tabular}{lccccc}
\hline
\textbf{Algorithm} & \textbf{MNIST} & \textbf{FashionMNIST} & \textbf{CIFAR-10} & \textbf{CIFAR-100} & \textbf{STL-10} \\
\hline
MGDA     & 99.28 & 91.85 & 74.42& 37.82 & 64.01 \\
RSGDA   & 99.26 & 90.95 & 74.95 & 38.12 & 63.81 \\
\sm{}  & \textbf{99.42} & \textbf{94.12} & \textbf{76.95} & \textbf{41.12} & \textbf{65.81} \\
\hline
\end{tabular}
\label{tab:test_accuracy}
\end{table}

\begin{figure}
    \centering
    \includegraphics[width=1.0\linewidth]{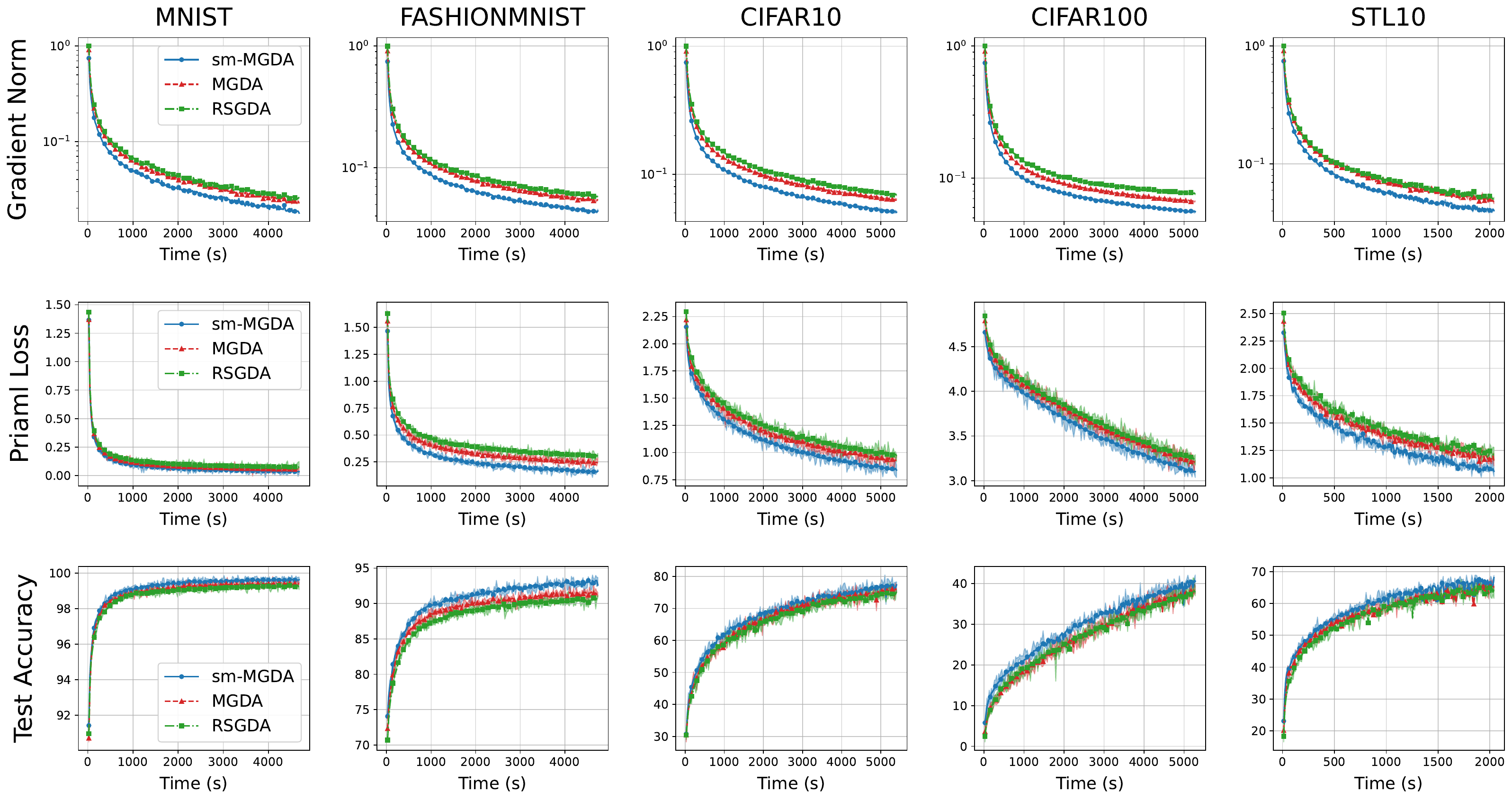}
    \caption{Primal loss, gradient norm, and test accuracy of tested algorithms over 3 runs. }
    \label{fig:datasets}
\end{figure}

Since Algorithm \ref{alg:sm-agda-stiefel} is a retration-free algorithm, the heatmaps of parameters $W^{\top}W$ across different layers of the model training by Algorithm \ref{alg:sm-agda-stiefel} after 200 epochs are shown in Figure \ref{fig:stiefel_constraints}, which demonstrates that the parameters of the models are indeed lies in the Stiefel manifold. The figures of manifold error with epoch for superquantile-based learning and robust DNN training task are shown in Figures \ref{fig:stiefel_constraints2} and \ref{fig:stiefel_constraints3}. respectively. It is shown that the manifold error decreases with the epochs, which validates our theoretical result.
\begin{figure}[h]
    \centering
    \includegraphics[width=1.0\linewidth]{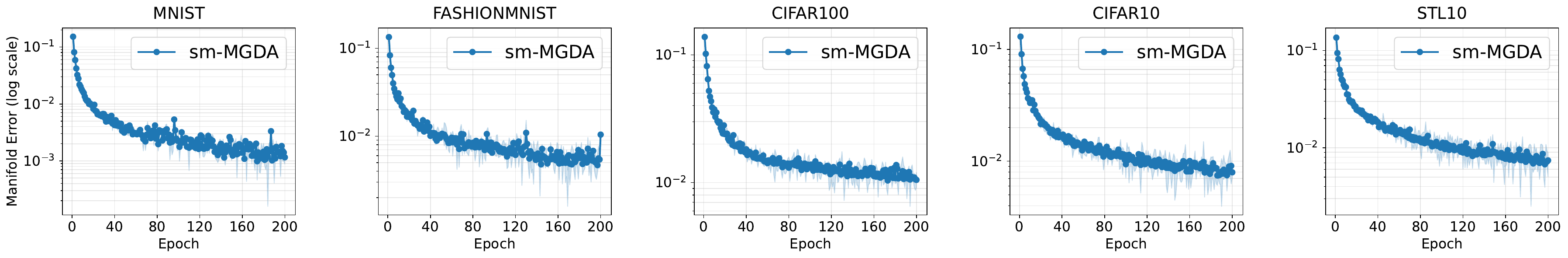}
    \caption{Manifold error of the model with epoch on superquantile-based learning problem.}
    \label{fig:stiefel_constraints2}
\end{figure}

\begin{figure}[h]
    \centering
    \includegraphics[width=1.0\linewidth]{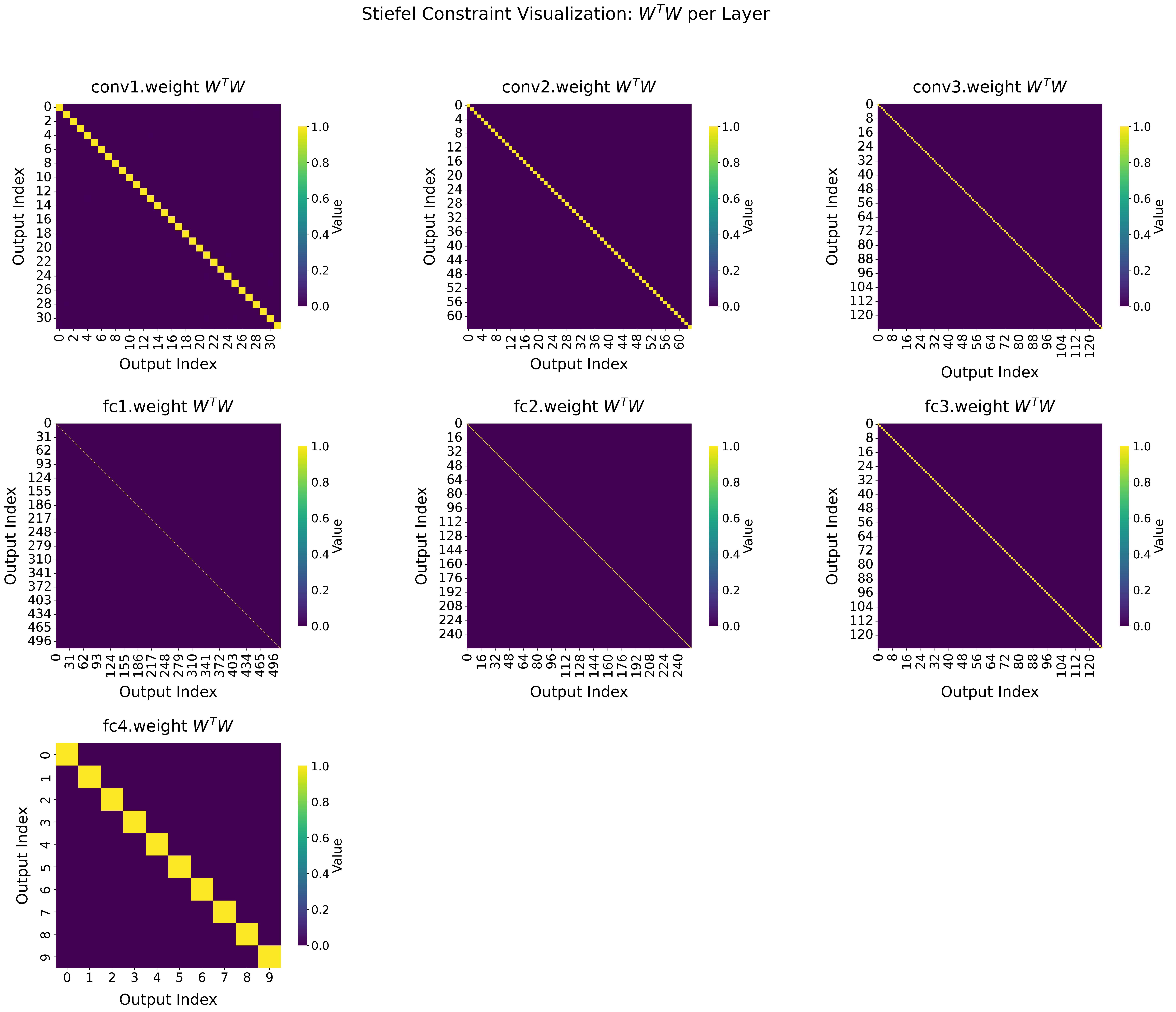}
    \caption{Heatmaps of $W^{\top}W$ across different layers (from left to right: Layer 1 to Layer 7) after training 200 epochs. The diagonal dominance in each block demonstrates the approximate satisfaction of Stiefel manifold constraints ($W^{\top}W \approx I$).}
    \label{fig:stiefel_constraints}
\end{figure}
\end{document}